\documentclass[]{interact}
\usepackage{amsmath}
\usepackage[utf8]{inputenc}
\usepackage[T1]{fontenc}
\usepackage{latexsym}
\usepackage{amssymb}
\usepackage{lmodern}
\usepackage{fix-cm}
\usepackage{url}
\usepackage{hyperref}
\usepackage{graphicx}
\usepackage{tabularx}
\usepackage{array}
\usepackage{mdwmath}
\usepackage{mdwtab}
\usepackage{float}

\usepackage[cmyk]{xcolor}
\usepackage{stmaryrd}
\usepackage{tikz}

\usepackage{makecell}
\usepackage{diagbox}

\usepackage{graphicx}
\usepackage{tabularx}
\usepackage{array}
\usepackage{lineno}
\usepackage{booktabs}
\usepackage{eurosym}

\newcommand{\pc}{\mathbf{P}}

\newcommand{\pp}{\mathbb{P}}

%Growth functions

%Mix
%\newcommand{\brr}{\mathbf{R}}
%\newcommand{\Qe}{\mathbf{Qe}}
\newcommand{\ml}{\mathfrak{L}}
\newcommand{\mm}{\blacklozenge}

%RYS

%Linguistic

%\newtheorem{theorem}{Theorem}
%\newtheorem{definition}{Definition}
%\newtheorem{proposition}{Proposition}
%\newproof{proof}{Proof}
%\newtheorem{example}{Example}
\theoremstyle{plain}% Theorem-like structures provided by amsthm.sty
\newtheorem{theorem}{Theorem}[section]

\newtheorem{proposition}[theorem]{Proposition}

\theoremstyle{definition}
\newtheorem{definition}[theorem]{Definition}
\newtheorem{example}[theorem]{Example}

\theoremstyle{remark}

\usepackage[natbibapa,nodoi]{apacite}
\setlength\bibhang{12pt}

\begin{document}
%\setcounter{page}{1}

%\articletype{ARTICLE TEMPLATE}% Specify the article type or omit as appropriate

\title{Dialectical Rough Sets, Parthood and Figures of Opposition-1}

\author{
\name{A. Mani \thanks{CONTACT A. Mani Email: a.mani.cms@gmail.com ; Web: \url{http://www.logicamani.in}}}
\affil{Department of Pure Mathematics, University of Calcutta\\
International Rough Set Society\\
9/1B, Jatin Bagchi Road, Kolkata-700029, India
}}

\maketitle
\begin{abstract}
In one perspective, the main theme of this research revolves around the inverse problem in the context of general rough sets that concerns the existence of rough basis for given approximations in a context. Granular operator spaces and variants were recently introduced by the present author as an optimal framework for anti-chain based algebraic semantics of general rough sets and the inverse problem. In the framework, various sub-types of crisp and non-crisp objects are identifiable that may be missed in more restrictive formalism. This is also because in the latter cases concepts of complementation and negation are taken for granted - while in reality they have a complicated dialectical basis. This motivates a general approach to dialectical rough sets building on previous work of the present author and figures of opposition. In this paper dialectical rough logics are invented from a semantic perspective, a concept of dialectical predicates is formalised, connection with dialetheias and glutty negation are established, parthood analyzed and studied from the viewpoint  of classical and dialectical figures of opposition by the present author. Her methods become more geometrical and encompass parthood as a primary relation (as opposed to roughly equivalent objects) for algebraic semantics. 
\end{abstract}
\begin{keywords}
Rough Objects; Dialectical Rough Semantics; Granular operator Spaces; Rough Mereology; Polytopes of Dialectics; Antichains; Dialectical Rough Counting; Axiomatic Approach to Granules; Constructive Algebraic Semantics; Figures of Opposition; Unified Semantics 
\end{keywords}

%\linenumbers
\section{Introduction}

It is well known that sets of rough objects (in various senses) are quasi or partially orderable. Specifically in classical or Pawlak rough sets \cite{zpb}, the set of roughly equivalent sets has a quasi Boolean order on it while the set of rough and crisp objects is Boolean ordered.  In the classical semantic domain or classical meta level, associated with general rough sets, the set of crisp and rough objects is quasi or partially orderable. Under minimal assumptions on the nature of these objects, many orders with rough ontology can be  associated - these necessarily have to do with concepts of discernibility. Concepts of rough objects, in these contexts, depend additionally on approximation operators and granulations used. Many generalizations of classical rough sets from granular perspectives have been studied in the literature. For an overview the reader is referred to \cite{am501}. These were part of the motivations for the invention of the concept of granular operator spaces by the present author in \cite{am6999} and developed further in \cite{am9114,am6900,am9699}. In the paper, it is shown that collections of mutually distinct objects (relative to some formulas) form antichains. Models of these objects, proposed in the paper, have been associated with deduction systems in \cite{am9114}. Collections of antichains can be partially ordered in many ways. Relative to each of these partial orders, maximal antichains can be defined. Maximal antichains are sets of mutually distinct objects and any of its supersets is not an antichain. In one sense, this is a way of handling roughness. To connect these with other rough objects, various kinds of negation-like operations (or generalizations thereof) are of interest. Such operations and predicates are of interest when rough parthoods are not partial or quasi orders \cite{am240,lp2011,am6900}.

Given some information about approximations in a context, the problem of providing a rough semantics is referred to as an \emph{inverse problem}. This class of problems was introduced by the present author in \cite{am3} and has been explored in \cite{am240,am5019} in particular. The solution for classical rough sets may be found in \cite{bc1}. For more practical contexts, abstract frameworks like \emph{rough Y-systems} \textsf{RYS} \cite{am240} are better suited for problem formulation. However, good semantics may not always be available for \textsf{RYS}. It is easier to construct algebraic semantics over granular operator spaces and generalizations thereof \cite{am9114,am6900} as can be seen in the algebraic semantics of \cite{am9114} that involve distributive lattices without universal negations. Therefore, these frameworks are better suited for solving inverse problems and the focus of this paper will be limited to these.  

The square of opposition and variants, in modern interpretation, refer to the relation between quantified or modal sentences in different contexts \cite{ip,fs2012}. These have been considered in the context of rough sets in \cite{cd9} from a set theoretical view of approximations of subsets (in some rough set theories). The relation of parthood in the context of general rough sets with figures has not been investigated in the literature.This is taken up in the present research paper (but from a semantic perspective). Connections with dialectical predication and other kinds of opposition are also taken up in the light of recent developments on connections of para-consistency and figures of opposition. 

At another level of conception, in the classical semantic domain, various sub-types of objects relate to each other in specific ways through ideas of approximations (the origin of these approximations may not be known clearly). These ways are shown to interact in dialectical ways to form other semantics under some assumptions. The basic structural schema can be viewed as a generalization of ideas like the square and hexagon of opposition - in fact as a combination of  dialectical oppositions involved. This aspect is explained and developed in detail in the present paper. These are also used for introducing new methods of counting and semantics in a forthcoming paper by the present author. In \cite{am699}, a dialectical rough set theory was developed by the present author using a specific concept of dialectical contradiction that refers to both rough and classical objects. Related parthood relations are also explored in detail with a view to construct possible models (semantics).

All these are part of a unified whole - the inverse problem and possible solution strategies in all its generality. Granular operator spaces and variants \cite{am6999,am6900} are used as a framework for the problem, and as these have been restricted by the conditions imposed on the approximations, it makes sense to speak of a \emph{part of a unified whole}. \emph{Importantly no easy negation like  operators are definable/defined in the framework and this is also a reason for exploring/identifying dialectical negations}. So at one level the entire paper is a contribution to possible solutions of the inverse problem and usable frameworks for the same. This also involves the invention of a universal dialectical negation (or opposition) from a formal perspective on the basis of diverse philosophical and practical considerations. 

Semantic domains (or domains of discourse) associated with rough objects, operations and predications have been identified by the present author in \cite{am240,am3930}, \cite{am9114,am9501,am501}. The problem of defining rough objects that permit reasoning about both intensional and extensional aspects posed in \cite{mkcfi2016} corresponds to identification of suitable semantic domains. This problem is also addressed in this research (see Sec\ref{cer}). 

The questions and problems that are taken up in this research paper and solved to varying extents also include the following:
\begin{description}
\item[1]{What may be a proper formalization of a dialectical logic and dialectical opposition?
\begin{itemize}
\item {What is the relation between dialetheias, truth gluts and dialectical contradiction?}
\item {Should the objects formalised by the logic be interpreted as state transitions?}
\item {How do dialetheias and dialectical contradiction differ?}
\end{itemize}}
\item[2]{Do paraconsistent logics constitute a proper formalization of the philosophical intent in Hegelian and Marxist dialectics?}
\item[3]{What is the connection between parthood in rough contexts and possible dialectical contradictions?}
\item[4]{What is a rough dialectical semantics and is every parthood based rough semantics a dialectical one?}
\item[5]{How does parthood relate to figures of opposition?}
\item[6]{What is a useful representation of rough objects that addresses the concerns of \cite{mkcfi2016}?}
\end{description}

This paper is structured as follows. The next subsection includes background material on rough concepts, posets, granules and parthood. In the next section, the superiority of granular operator spaces over property systems is explained. Dialectical negation and logics are characterised from a critical perspective in the third section. In the following section, dialectical rough logics are developed and related parthoods are explored. Many examples are provided in the context of the semantic framework used in the fifth section. The sixth section is about figures of dialectical opposition generated by few specific parthood related statements in rough sets and a proposal for handling pseudo gluts. Some directions are also provided in the section. The reader can possibly omit some of the philosophical aspects of the section on \emph{dialectical negation} on a first reading. But a proper understanding of the section would be useful for applications.

\subsection{Background}

In quasi or partially ordered sets, sets of mutually incomparable  elements are called \emph{antichains}. Some of the basic properties may be found in \cite{gg1998,koh}. The possibility of using antichains of rough objects for a possible semantics was mentioned in \cite{am9501,am6000,am3690} and has been developed subsequently in \cite{am6999,am9114}. The semantics invented in the paper is applicable for a large class of operator based rough sets including specific cases of rough Y- systems (\textsf{RYS}) \cite{am240} and other general approaches like \cite{cd3,cc5,yy9,it2}. In \cite{cd3,cc5,gc2018}, negation like operators are assumed in general and these are not definable operations in terms of order related operations in a algebraic sense (Kleene negation is also not a definable operation in the situation). 

For basics of rough sets, the reader is referred to \cite{zpb,ppm2,lp2011}. Distinct mereological approaches to rough sets can be found in \cite{ps3,lp2011,am9114}.  

Many concepts of \emph{lower} and \emph{upper approximation operators} are known in the literature. Relevant definitions are fixed below.

If $S$ is any set (in ZFC), then a \emph{lower approximation operator} $l$ over $S$ is be a map $l: \wp(S) \longmapsto \wp (S)$ that satisfies:
\begin{align}
(\forall x \in \wp(S) ) \,\neg (x \subset x^l) \tag{non-increasing}\\
(\forall x \in \wp(S) )\, x^l = x^{ll} \tag{idempotence}\\
(\forall a, b \in \wp(S) ) \, (a \subseteq b \longrightarrow a^l \subseteq b^l ) \tag{monotonicity}
\end{align}

In the literature on rough sets many variants of the above are also referred to as lower approximations because the concept has to do with what one thinks a \emph{lower approximation} ought to be in the context under consideration. Over the same set, an \emph{upper approximation operator} $u$ shall be a map  $u : \wp (S) \longmapsto \wp (S)$ that satisfies: 
\begin{align}
(\forall x \in \wp(S) ) \, x \subseteq x^u \tag{increasing}\\
(\forall a, b \in \wp(S) ) \, (a \subseteq b \longrightarrow a^u \subseteq b^u ) \tag{monotonicity}
\end{align}
All of these properties hold (in a non-trivial sense) in proto-transitive rough sets \cite{am9501}, while weaker properties hold for lower approximations in esoteric rough sets \cite{am24,am501}. Conditions involving both $l$ and $u$ have been omitted for simplicity.

In some practical contexts, lower and upper approximation operators may be partial and $l$ (respectively $u$) may be defined on a subset $\mathcal{S}\subset \wp(S)$ instead. In these cases, the partial operation may not necessarily be easily completed. Further the properties attributed to approximations may be a matter of discovery. These cases fall under the general class of \emph{inverse problems} \cite{am3,am240} where the goal is to see whether the approximations originate or fit a rough evolution (process). More details can be found in the next section.

An element $x\in\wp{S}$ will be said to be \emph{lower definite} (resp. \emph{upper definite}) if and only if $x^l = x$ (resp. $x^u = x$) and \emph{definite}, when it is both lower and upper definite. In general rough sets, these ideas of definiteness are insufficient as it can happen that upper approximations of upper approximations are still not upper definite. 

The concept of a Rough Y-System \textsf{RYS} was introduced by the present author in \cite{am240,am99} and refined further in \cite{am1800} and her doctoral thesis as a very general framework for rough sets from an axiomatic granular perspective. The concept is not used in an essential way in the present paper and the reader may skip the few remarks that involve them. In simplified terms, it is a model of any collection of rough/crisp objects with approximation operators and a binary parthood predicate $\pc$ as part of its signature.   

Possible concepts of rough objects considered in the literature include the following: 
\begin{itemize}
\item {\textsf{Non definite subsets} of $S$; formally, $x$ is a rough object if and only if $x^l \neq x^u$. }
\item {\textsf{Pairs of definite subsets} of the form $(a, b)$ that satisfy $a\subseteq b$.}
\item {\textsf{Pairs of subsets} of the form $(x^l ,x^u)$.}
\item {Sets in an \textsf{interval of the form} $(x^l, x^u)$; formally, }
\item {Sets in an \textsf{interval of the form} $(a, b)$ satisfying $a\subseteq b$ with $a, b$ being definite subsets.}
\item {Higher order intervals bounded by definite subsets \cite{am240}.}
\item {\textsf{Non-definite elements in a RYS} \cite{am240}; formally, those $x$ satisfying $\neg \pc x^u x^l $ are rough objects.}
\end{itemize}

In general, a given set of approximations may be compatible with multiple concepts of definite and rough objects - the eventual choice depends on one's choice of semantic domain. A detailed treatment, due to the present author, can be found in \cite{am240,am501}. In \cite{mkcfi2016}, some of these definitions of rough objects are regarded as imperfect for the purpose of expressing both multiple extensions of concepts and rough objects - this problem relates to representation (given a restricted view of granularity) within the classical semantic domain of \cite{am240}.

\textsf{Concepts of representation of objects necessarily relate to choice of semantic frameworks. But in general, order theoretic representations are of interest in most contexts. In operator centric approaches, the problem is also about finding ideal representations.}

In simple terms, \emph{granules} are the subsets that generate approximations and \emph{granulations} are the collections of all such granules in the context. In the present author's view, there are at least three main classes of granular computing (five classes have been considered by her in \cite{am501}). The three main classes are 

\begin{itemize}
\item{\textsf{Primitive Granular Computing Processes (PGCP)}: in which the problem requirements are not rigid, effort on formalization is limited, scope of abstraction become limited and concept of granules are often vague, though they maybe concrete or abstract (relative to all materialist viewpoints).} 
\item{\textsf{Classical Granular Computing Paradigm ( CGCP)}: The precision based \emph{classical granular computing paradigm}, traceable to Moore and Shannon's paper \cite{sha56}, is commonly understood as the granular computing paradigm (The reader may  note that the idea is vaguely present in \cite{sha48}). CGCP has since been adapted to fuzzy and rough set theories in different ways. An overview is considered in \cite{tyl}. In CGCP, granules may exist at different levels of precision and granulations (granules at specific levels or processes) form a hierarchy that may be used to solve the problem in question.}
\item{\textsf{Axiomatic Approach to Granularity (AAG)}: The axiomatic approach to granularity, initiated in \cite{am99}, has been developed by the present author in the direction of contamination reduction in \cite{am240}. The concept of admissible granules, described below, was arrived in the latter paper and has been refined/simplified subsequently in \cite{am3930,am1800,am9114}.  From the order-theoretic algebraic point of view, the deviation is in a very new direction relative the precision-based paradigm. The paradigm shift includes a new approach to measures.}
\end{itemize}
Historical details can be found in a section in \cite{am501,am9501}.

Granular operator spaces, a set framework with operators introduced by the present author in \cite{am6999}, will be used as considerations relating to antichains will require quasi/partial orders in an essential way. The evolution of the operators need not be induced by a cover or a relation (corresponding to cover or relation based systems respectively), but these would be special cases. The generalization to some rough Y-systems \textsf{RYS} (see \cite{am240} for definitions), will of course be possible as a result. 

\begin{definition}\label{gos}
A \emph{Granular Operator Space}\cite{am6999} $S$ is a structure of the form $S\,=\, \left\langle \underline{S}, \mathcal{G}, l , u\right\rangle$ with $\underline{S}$ being a set, $\mathcal{G}$ an \emph{admissible granulation}(defined below) over $S$ and $l, u$ being operators $:\wp(\underline{S})\longmapsto \wp(\underline{S})$ ($\wp(\underline{S})$ denotes the power set of $\underline{S}$) satisfying the following ($\underline{S}$ will be replaced with $S$ if clear from the context. \textsf{Lower and upper case alphabets will both be used for subsets} ):

\begin{align*}
a^l \subseteq a\,\&\,a^{ll} = a^l \,\&\, a^{u} \subset a^{uu}  \\
(a\subseteq b \longrightarrow a^l \subseteq b^l \,\&\,a^u \subseteq b^u)\\
\emptyset^l = \emptyset \,\&\,\emptyset^u = \emptyset \,\&\,\underline{S}^{l}\subseteq S \,\&\, \underline{S}^{u}\subseteq S.
\end{align*}

Here, \emph{admissible granulations} are granulations $\mathcal{G}$ that satisfy the following three conditions\\ ($t$ is a term operation formed from the set operations $\cup, \cap, ^c, 1, \emptyset$):

\begin{align*}
(\forall a \exists
b_{1},\ldots b_{r}\in \mathcal{G})\, t(b_{1},\,b_{2}, \ldots \,b_{r})=a^{l} \\
\tag{Weak RA, WRA} \mathrm{and}\: (\forall a)\,(\exists
b_{1},\,\ldots\,b_{r}\in \mathcal{G})\,t(b_{1},\,b_{2}, \ldots \,b_{r}) =
a^{u},\\
\tag{Lower Stability, LS}{(\forall b \in
\mathcal{G})(\forall {a\in \wp(\underline{S}) })\, ( b\subseteq a\,\longrightarrow\, b \subseteq a^{l}),}\\
\tag{Full Underlap, FU}{(\forall
a,\,b\in\mathcal{G})(\exists
z\in \wp(\underline{S}) )\, a\subset z,\,b \subset z\,\&\,z^{l} = z^{u} = z,}
\end{align*}
\end{definition}

\begin{flushleft}
\textbf{Remarks}: 
\end{flushleft}
\begin{itemize}
\item {The concept of admissible granulation was defined for \textsf{RYS} in \cite{am240} using parthoods instead of set inclusion and relative to \textsf{RYS}, $\pc = \subseteq$, $\pp = \subset$. }
\item {The conditions defining admissible granulations mean that every approximation is somehow representable by granules in a set theoretic way, that granules are lower definite, and that all pairs of distinct granules are contained in definite objects.}
\end{itemize}

On $\wp(\underline{S})$, the relation $\sqsubset$ is defined by \begin{equation}A \sqsubset B \text{ if and only if } A^l \subseteq B^l \,\&\, A^u \subseteq B^u.\end{equation} The rough equality relation on $\wp(\underline{S})$ is defined via \[A\approx B \text{ if and only if } A\sqsubset B  \, \&\,B \sqsubset A.\] 

Regarding the quotient $\wp(\underline{S})|\approx$ as a subset of $\wp(\underline{S})$, an order $\Subset$ can be defined as follows: \begin{equation}\alpha \Subset \beta \text{ if and only if } \alpha^l \subseteq \beta^l \,\&\, \alpha^u \subseteq \beta^u.\end{equation} Here $\alpha^l$ is being interpreted as the lower approximation of $\alpha$ and so on. $\Subset$ will be referred to as the \emph{basic rough order}.

\begin{definition}
By a \emph{roughly consistent object} will be meant a set of subsets of $\underline{S}$ with mutually identical lower and upper approximations respectively. In symbols $H$ is a roughly consistent object if it is of the form  $H = \{A ; (\forall B\in H)\,A^l =B^l, A^u = B^u \}$. The set of all roughly consistent objects is partially ordered by the inclusion relation. Relative this maximal roughly consistent objects will be referred to as \emph{rough objects}. By \emph{definite rough objects}, will be meant rough objects of the form $H$ that satisfy 
\[(\forall A \in H) \, A^{ll} = A^l \,\&\, A^{uu} = A^{u}. \] 
\end{definition}

However, this definition of rough objects will not necessarily be followed in this paper.

\begin{proposition}
$\Subset$ is a bounded partial order on $\wp(\underline{S})|\approx$. 
\end{proposition}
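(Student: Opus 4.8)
The plan is to check the three partial-order axioms for $\Subset$ and then exhibit a least and a greatest element; the only genuinely load-bearing observations are that the operators $l,u$ are constant on $\approx$-classes and that passing to the quotient is precisely what repairs the failure of antisymmetry of $\sqsubset$.

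First I would record the preliminary facts. The relation $\sqsubset$ on $\wp(\underline{S})$ is reflexive and transitive, immediately, because set inclusion is; hence $\approx$, being its symmetric part, is an equivalence relation. Next, if $A \approx B$ then $A^l \subseteq B^l$ and $B^l \subseteq A^l$, so $A^l = B^l$, and likewise $A^u = B^u$. Therefore the assignments $\alpha \mapsto \alpha^l$ and $\alpha \mapsto \alpha^u$ on $\wp(\underline{S})|\approx$ do not depend on the chosen representative (this also makes precise the phrase ``$\alpha^l$ is the lower approximation of $\alpha$''), and so $\Subset$ is a well-defined relation on the quotient.

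Reflexivity and transitivity of $\Subset$ are then inherited verbatim from those of $\subseteq$, applied coordinatewise to the pair $(\alpha^l,\alpha^u)$. For antisymmetry, suppose $\alpha \Subset \beta$ and $\beta \Subset \alpha$; then $\alpha^l \subseteq \beta^l \subseteq \alpha^l$ and $\alpha^u \subseteq \beta^u \subseteq \alpha^u$, so $\alpha^l=\beta^l$ and $\alpha^u=\beta^u$; choosing $A\in\alpha$, $B\in\beta$ gives $A \sqsubset B$ and $B \sqsubset A$, i.e. $A \approx B$, whence $\alpha=\beta$ as classes. I expect this step — the interplay of ``operators constant on classes'' with ``quotienting by the symmetric part'' — to be the only point needing care, since on $\wp(\underline{S})$ itself $\sqsubset$ is merely a quasi-order and antisymmetry fails.

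Finally, boundedness. The class of $\emptyset$ is the least element: by the axiom $\emptyset^l=\emptyset^u=\emptyset$ we get $\emptyset \subseteq \alpha^l$ and $\emptyset \subseteq \alpha^u$ for every $\alpha$, so $[\emptyset]\Subset\alpha$. The class of $\underline{S}$ is the greatest: for any $\alpha$ pick $A\in\alpha$; since $A\subseteq \underline{S}$, monotonicity of $l$ and $u$ yields $A^l\subseteq \underline{S}^l$ and $A^u\subseteq \underline{S}^u$, that is $\alpha^l \subseteq (\underline{S})^l$ and $\alpha^u \subseteq (\underline{S})^u$, hence $\alpha \Subset [\underline{S}]$ (note that one does not need $\underline{S}^l=\underline{S}$, only monotonicity). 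This exhausts all the requirements, so $\Subset$ is a bounded partial order on $\wp(\underline{S})|\approx$.
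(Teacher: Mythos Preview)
Your proof is correct and follows essentially the same route as the paper's own argument: reflexivity and transitivity inherited from $\subseteq$, antisymmetry from the fact that equal approximations force equality of $\approx$-classes, and bounds given by the classes of $\emptyset$ and $\underline{S}$ (the paper writes the top as $(S^l,S^u)$, invoking the same monotonicity observation you make). Your version is in fact more careful in spelling out the well-definedness of $\alpha^l,\alpha^u$ on the quotient, which the paper leaves implicit.
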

\begin{proof}
Reflexivity is obvious.  If $\alpha \Subset \beta$ and $\beta \Subset \alpha$, then it follows that $\alpha^l = \beta^l$ and $\alpha^u = \beta^u$ and so antisymmetry holds. 

If $\alpha \Subset \beta$, $\beta \Subset \gamma$, then the transitivity of set inclusion induces transitivity of $\Subset$.
The poset is bounded by $0 = (\emptyset , \emptyset)$ and $1 = (S^l , S^u)$. Note that $1$ need not coincide with $(S, S)$. 

\end{proof}

The concept of \emph{generalised granular operator spaces} has been introduced in \cite{am9114,am6900} as a proper generalization of that of granular operator spaces. The main difference is in the replacement of $\subset$ by arbitrary \emph{part of} ($\pc$) relations in the axioms of admissible granules and inclusion of $\pc$ in the signature of the structure.
\begin{definition}
A \emph{General Granular Operator Space} (\textsf{GSP}) $S$ is a structure of the form $S\,=\, \left\langle \underline{S}, \mathcal{G}, l , u, \pc \right\rangle$ with $\underline{S}$ being a set, $\mathcal{G}$ an \emph{admissible granulation}(defined below) over $S$, $l, u$ being operators $:\wp(\underline{S})\longmapsto \wp(\underline{S})$ and $\pc$ being a definable binary generalized transitive predicate (for parthood) on $\wp(\underline{S})$ satisfying the same conditions as in Def.\ref{gos} except for those on admissible granulations (Generalised transitivity can be any proper nontrivial generalization of parthood (see \cite{am3690}). $\pp$ is  proper parthood (defined via $\pp ab$ if and only if $\pc ab \,\&\,\neg \pc ba$) and $t$ is a term operation formed from set operations):

\begin{align*}
(\forall x \exists
y_{1},\ldots y_{r}\in \mathcal{G})\, t(y_{1},\,y_{2}, \ldots \,y_{r})=x^{l} \\
\tag{Weak RA, WRA} \mathrm{and}\: (\forall x)\,(\exists
y_{1},\,\ldots\,y_{r}\in \mathcal{G})\,t(y_{1},\,y_{2}, \ldots \,y_{r}) =
x^{u},\\
\tag{Lower Stability, LS}{(\forall y \in
\mathcal{G})(\forall {x\in \wp(\underline{S}) })\, ( \pc yx\,\longrightarrow\, \pc yx^{l}),}\\
\tag{Full Underlap, FU}{(\forall
x,\,y\in\mathcal{G})(\exists
z\in \wp(\underline{S}) )\, \pp xz,\,\&\,\pp yz\,\&\,z^{l} = z^{u} = z,}
\end{align*}
\end{definition}

It is sometimes more convenient to use only sets and subsets in the formalism as these are the kinds of objects that may be observed by agents and such a formalism would be more suited for reformulation in formal languages. For this reason higher order variants of general granular operator spaces have been defined in \cite{am9006} by the present author. A detailed account can be found in \cite{am501}. 

In a partially ordered set \emph{chains} are subsets in which any two elements are comparable. \emph{Antichains}, in contrast, are subsets in which no two elements are comparable. Singletons are both chains and antichains.

\subsubsection{Rough Sets and Parthood }

It is necessary to clarify the nature of parthood even in set-theoretic structures like granular operator spaces.
The restriction of the parthood relation to the case when the first argument is a granule is particularly significant. The theoretical assumption that \textsf{objects} are determined by their parts, and specifically by granules, may not be reasonable when knowledge of the context is evolving. Indeed, in this situation:
\begin{itemize}
\item {granulation can be confounded by partial nature of information and noise,}
\item {knowledge of all possible granulations may not be possible and the chosen set of granules may not be optimal for handling partial information, and }
\item {the process has strong connections with apriori understanding of the objects in question.}
\end{itemize}

\section{Types of Preprocessing and Ontology}

Information storage and retrieval systems (also referred to as information tables, descriptive systems, knowledge representation system) are basically representations of structured data tables. These have often been referred to as  \emph{information systems} in the rough set literature - the term means \emph{an integrated heterogeneous system that has components for collecting, storing and processing data} in closely allied fields like artificial intelligence, database theory and machine learning. So it makes sense to avoid referring to \emph{information tables} as \emph{information systems} \cite{cd2017}.

Information tables are associated with only some instances of data in real life. In many cases, such association may not be useful enough in the first place. \emph{It is also possible that data collected in a context is in mixed form with some information being in information table form and some in the form of \emph{relevant approximations} or all the main data is in terms of approximations}. The \emph{inverse problem}, introduced by the present author in \cite{am3} and subsequently refined in \cite{am240}, seeks to handle these types of situations. Granular operator spaces and higher order variants studied by the present author in \cite{am6999,am9114,am9006} are important structures that can be used for its formulation. In simple terms, the problem is a generalization of the duality problem which may be obtained by replacing the semantic structures with parts thereof. In a mathematical sense, this generalization may not be proper (or conservative) in general. 

The basic problem is \emph{given a set of approximations, similarities and some relations about the objects,
find an information system or a set of approximation spaces that fits the available information according to a rough procedure}. In this formalism, a number of information tables or approximation systems along with rough procedures may fit in. Even when a number of additional conditions like lattice orders, aggregation and commonality operations are available, the problem may not be solvable in a unique sense. Negation-like operations and generalizations thereof can play a crucial role in possible formalisms. In particular, the ortho-pair approach \cite{gcd2018} to rough sets relies on negations in a very essential way. 

It is also necessary to concentrate on the evolution of negation like operations or predicates at a higher level - because it is always necessary to explain the choice of operations used in a semantics. 
The following example (Ex.\ref{exa1}) from \cite{am9114} illustrates the inverse problem. Some ways of identifying generalised negation-like predicates are shown in Ex.\ref{badminton} -

\begin{example}\label{exa1}
This example has the form of a narrative in \cite{am9114} that gets progressively complex. It has been used to illustrate a number of computational contexts in the paper.

Suppose Alice wants to purchase a laptop from an on line store for electronics. Then she is likely to be confronted by a large number of models and offers from different manufacturers and sellers. Suppose also that the she is willing to spend less than \officialeuro $x$ and is pretty open to considering a number of models. This can happen, for example, when she is just looking for a laptop with enough computing power for her programming tasks.   

This situation may appear to have originated from information tables with complex rules in columns for decisions and preferences. Such tables are not information systems in the proper sense. Computing power, for one thing, is a context dependent function of CPU cache memories, number of cores, CPU frequency, RAM, architecture of chip set, and other factors like type of hard disk storage. 

\begin{proposition}
The set of laptops $\mathbb{S}$ that are priced less than \officialeuro $x$ can be totally quasi-ordered. 
\end{proposition}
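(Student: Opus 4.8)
The plan is to exhibit an explicit total quasi-order on $\mathbb{S}$ induced by a scalar valuation, rather than to try to order laptops directly by their multidimensional hardware specifications. First I would fix a \emph{context}: as the surrounding discussion makes explicit, ``computing power'' is a context-dependent aggregate of CPU cache, number of cores, clock frequency, RAM, chipset architecture, storage type, and so on. Fixing the context amounts to fixing one admissible way of weighing these heterogeneous factors, and so yields a valuation $v : \mathbb{S} \longrightarrow \mathbb{R}$ (for instance a benchmark score, or any fixed monotone aggregation of the feature vector) that assigns to each laptop priced below \officialeuro $x$ a single real number.

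Next I would define, for $a, b \in \mathbb{S}$, the relation $a \preceq b$ if and only if $v(a) \le v(b)$. The three defining properties are then inherited directly from $\le$ on $\mathbb{R}$: reflexivity holds since $v(a) \le v(a)$; transitivity holds since $v(a) \le v(b)$ together with $v(b) \le v(c)$ gives $v(a) \le v(c)$; and totality (comparability of any two elements) holds because $\le$ is a linear order on $\mathbb{R}$, so for all $a, b$ either $v(a) \le v(b)$ or $v(b) \le v(a)$. Hence $\preceq$ is a total quasi-order on $\mathbb{S}$. The price restriction ``$<$ \officialeuro $x$'' plays no structural role beyond cutting out the subset $\mathbb{S}$: the relation induced on any subset of a totally quasi-ordered set is again a total quasi-order.

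I would then remark why the statement speaks of a quasi-order and not of a partial order: $\preceq$ fails antisymmetry in general, since two genuinely distinct laptop models may have identical computing power, i.e.\ $v(a) = v(b)$ with $a \neq b$; such models form a nontrivial cluster of mutually $\preceq$-comparable but distinct elements. This is precisely the kind of roughly-indistinguishable cluster that motivates the passage to quotients elsewhere in the paper, and it is why ``quasi-ordered'' is the appropriate conclusion here.

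The main obstacle is not analytic but modelling-theoretic: one must justify that fixing a context really does collapse the multidimensional performance profile to a single scalar $v$. If the context is left unspecified, only the coordinatewise (product) order on feature vectors is available, and that is merely a \emph{partial} quasi-order — incomparable configurations abound (more cores but less cache, faster clock but slower storage, and so on). The content of the proposition is exactly that committing to a context (equivalently, a scalarization or utility) upgrades this partial quasi-order to a total one; once the valuation $v$ is granted, all verifications are immediate from the order properties of $\mathbb{R}$.
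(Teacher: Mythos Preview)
Your argument is correct in structure and reaches the right conclusion, but it differs from the paper's proof in the choice of scalar valuation. You scalarize by \emph{computing power}, fixing a context to collapse the multidimensional hardware profile into a single real-valued score $v$, and then pull back $\le$ from $\mathbb{R}$. The paper instead orders by \emph{price}: it sets $a \prec b$ iff the price of $a$ is at most the price of $b$, observes that this is reflexive and transitive, and notes that distinct laptops with equal price witness the failure of antisymmetry.

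Both routes are instances of the same idea---any map into $(\mathbb{R},\le)$ induces a total quasi-order on the domain---so formally there is nothing to choose between them. Pragmatically, though, the paper's choice sidesteps exactly the obstacle you flag in your final paragraph: price is already given data (it is how $\mathbb{S}$ was cut out in the first place), so no context-fixing or aggregation needs to be justified. Your version buys a tighter connection to the surrounding discussion of computing power as a context-dependent aggregate, and it makes explicit why only a \emph{quasi}-order is claimed; the paper's version buys immediacy and avoids any modelling caveats. Either is acceptable as a proof of the proposition.
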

\begin{proof}
Suppose $\prec$ is the relation defined according to $a \prec b$ if and only if price of laptop $a$ is less than or equal to that of laptop $b$. Then it is easy to see that $\prec$ is a reflexive and transitive relation.
If two different laptops $a$ and $b$ have the same price, then $a \prec b$ and $b\prec a$ would hold.
So $\prec$ may not be antisymmetric.

\end{proof}

Suppose that under an additional constraint like CPU brand preference, the set of laptops becomes totally ordered. That is under a revised definition of $\prec$ of the form: $a \prec b$ if and only if price of laptop $a$ is less than that of laptop $b$ and if the prices are equal then CPU brand of $b$ must be preferred over $a$'s.

Suppose now that Alice has more knowledge about a subset $C$ of models in the set of laptops $\mathbb{S}$. Let these be labeled as \emph{crisp} and let the order on $C$ be $\prec_{|C}$. Using additional criteria, rough objects can be indicated. Though lower and upper approximations can be defined in the scenario, the granulations actually used are harder to arrive at without all the gory details.

This example once again shows that granulation and construction of approximations from granules may not be related to the construction of approximations from properties in a cumulative way. 
\end{example}

In \cite{am9114}, it is also shown that the number of data sets, of the form mentioned, that fit into a rough scheme of things are relatively less than the number of those that do not fit. Many combinatorial bounds on the form of rough object distribution are also proved in the paper.

Examples of approximations that are not rough in any sense are common in misjudgments and irrational reasoning guided by prejudice. So solutions to the problem can also help in judging the irrationality of reasoning and algorithms in different contexts.

\begin{example}\label{badminton}
Databases associated with women badminton players have the form of multi-dimensional information tables about performance in various games, practice sessions, training regimen and other relevant information. Video data about all these would also be available. Typically these are updated at specific intervals of time. Players tend to perform better under specific conditions that include the state of the game in progress. They may also be able to raise the level of their game under specific kinds of stress - this involves dynamic learning. Additional information of the form can be expressed in terms of approximations, especially when the associations are not too perfect. Thus, a statement like 
\begin{quote}
Player $A$ is likely to perform at least as well as player $B$ in playing conditions $C$ 
\end{quote}
can be translated into $B^l \leq A^l $ where the approximations refer the specific property under consideration.
\emph{But this information representation has no obvious rough set basis associated} and falls under the inverse problem, where the problem is of explaining the approximations from a rough perspective/basis. 

Consider a pair $(a, b)$ with 
Updates to the database can also be described through generalised negation-like predicates in the context 
\end{example}

\subsection{Granular Operator Spaces and Property Systems}

Limitations of the property system approach are mentioned in this subsection 

In general, data can be presented in real life partly in terms of approximations and in the object-attribute-value way of representing things (For those that like statistics, the collection of instances of the sentence has nice statistical properties). Such contexts were never intended to be captured through property systems or related basic constructors (see \cite{ppm2,yy2012c,gdu}). In particular, the examples of \cite{pp20} \emph{are abstract and  the possible problems with basic constructors (when viewed from the perspective of approximation properties satisfied) are issues relating to construction - empirical aspects are missed}. 

\begin{definition}
A \emph{property system}\cite{pp20,ppm2,gs1987,gs1999} $\Pi$ is a triple of the form \[\left\langle U, P, R\right\rangle\] with $U$ being a universe of objects, $P$ a set of properties, and $R\subseteq U\times P$ being a \emph{manifestation relation} subject to the interpretation object $a$ has property $b$ if and only if  $(a, b )\in R$. When $P = U $, then $\Pi$ is said to be a \emph{square relational system} and $\Pi$ then can be read as a Kripke model for a corresponding modal system. 

On property systems, basic constructors that may be defined for $A\subseteq U $ and $B\subseteq P$ are 
\begin{align}
<i> :\wp (U )\longmapsto \wp (P);\; <i>(A) = \{ h : (\exists g\in A)\, (g, h) \in R\}\\
<e>: \wp(P ) \longmapsto \wp(U); <e>(B) = \{g : (\exists h\in B)\, (g, h)\in R \} \\
[i]: \wp(U ) \longmapsto\wp(P ); [i](A) = \{h : (\forall g \in U)((g, h) \in R\longrightarrow g \in A)\}\\
[e] : \wp(P ) \longmapsto\wp(U); [e](B) = \{g : (\forall h\in P)((g, h)\in R\longrightarrow h\in B)\}
\end{align}
\end{definition}

It is known that the basic constructors may correspond to approximations under some conditions \cite{ppm2}. It may hold under some other conditions. Property system are not suitable for handling granularity and many of the inverse problem contexts. The latter part of the statement requires some explanation because suitability depends on the way in which the problem is posed - this has not been looked into comprehensively in the literature.

If all the data is of the form 
\[\text{Object } X \text{ is definitely approximated by } \{A_1, \ldots A_n \},\]
with the symbols $X,\, A_i$ being potentially substitutable by objects, then the data could in principle be written in property system form with the sets of $A_i$s forming the set of properties $P$ - in this situation the relation $R$ attains a different meaning. This is consistent with the structure being not committed to tractability of properties possessed by objects. Granularity would also be obscure in the situation. 

If all the data is of the form 
\[\text{Object } X \text{ 's approximations are included in } \{A_1, \ldots A_n \},\]
then the property system approach comes under even more difficulties. Granular operator spaces and generalised versions thereof \cite{am6900} in contrast can handle all this.

\section{Dialectical Negation}

In general rough sets, most relevant concepts of negation and implication are dialectical in some sense. A universal definition of \emph{dialectical negation} is naturally of interest - at least one that works for the associated vague contexts. Since vagueness is everywhere, multiple concepts of \emph{dialectical negation} in the literature need to be reconciled (to the extent possible) for the purpose of a universal definition. The main questions relating to possible definitions of dialectical negations or contradictions at the formal level arise from the following reasons (these are explained below):
\begin{itemize}
\item {The consensus that dialectical logics must be logics that concern state transitions.}
\item {The view that paraconsistent logics are essentially dialectical logics (see \cite{dw}). }
\item {The view that dialectical negation cannot be reduced to classical negation (see \cite{ip}). Indeed, in rough sets many kinds of negations and partial negations have been used in the literature (see for example, \cite{am501,pp1998,pp2000,am3,cc,bc1}, \cite{am105,am99,am24,sw,ccd11,pp20,gcd2018} and these lead to many contradictions as in 
\begin{enumerate}
\item {\emph{contradictions} \cite{pp1998} which are not false but that represent topological boundaries;}
\item {\emph{contradictions} \cite{pp2000} which are not false but lie between an absolute and local false;}
\item {\emph{contradictions} \cite{pp20} which lead to at least a paraconsistent and a paracomplete logic.}
\end{enumerate}}
\item {The view that dialectical negation is glutty negation (example \cite{ef14,br2008})- an intermediate kind of negation. }
\item {The view that only propositional dialectical logics are possible (\cite{mvp}).}
\item {The present author's position that dialectical contradiction must be represented by binary predicates or binary logical connectives in general \cite{am999,am699}. This is arrived at in what follows.}
\end{itemize}

The relationship of an object and its negation may belong to one of three categories (an extension of the classification in \cite{gp2006}):
\begin{enumerate}
\item {\textsf{Cancellation} as in \emph{the attributes do not apply}. The ethical category of \emph{the negative} as used in natural language is often about this kind of cancellation. It is easy to capture this in logics admitting different types of atomic variables (or formalised for instance by labeled deductive systems \cite{dg96}). In these the concept of the \textsf{Negative} is usually not an atomic category. Obviously this type of negation carries more information in being a \textsf{Not This but Among Those} kind of negation as opposed to the simple \textsf{Not This}, \textsf{Not This and Something Else } and weakenings thereof.}
\item {\textsf{Complementation} understood in the sense of classical logic.}
\item {\textsf{Glutty Version} understood as something intermediate between the above two.}
\end{enumerate}

In general rough sets, if $A$ is a subset of attributes and $c$ is set complementation, then the value of more common \emph{negations in a rough sense} include $A^{uc},\, A^{cl},\,A^{uuc},\, A^{clu} $. Each of these is a nonempty subset of $A^c$ in general. Consequently, the corresponding negation is glutty in a set theoretic sense. 

The concept of \emph{dialectical negation} as a material negation in logics about states is a reasonable abstraction of the core concept in Hegelian and Marxist dialectics (though this involves rejection of Hegel's idealist position). The negation refers to concepts in flux and so a logic with regard to the behavior of states rather than static objects would be appropriate. According to Hegel, the world, thought and human reasoning are dynamic and even the idea of true concepts are dynamic in nature. Poorly understood concepts undergo refinement as plural concepts (with the parts being \emph{abstract} in Hegel's sense) that assume many \emph{H(Hegelian)-Contradictory} forms. After successive refinements the resulting forms become reconciled or united as a whole. So for Hegel, H-contradictions are essential for all life and world dynamics. But Hegel's idealist position permitted only a closed world scheme of things. In Marx's materialist dialectic, the world is an open-ended system and so recursive applications of dialectical contradiction need not terminate or be periodic. All this means that the glutty interpretation of Hegel's contradiction may well be correct modulo some properties, while Marx's idea of dialectical contradiction does not reduce to such an interpretation in general. The debate on endurantism and perdurantism is very relevant in the context of Hegelian dialectics because the semantic domain associated is restricted by Hegel's world view. \emph{In rough semantics, especially granular ones, approximations may be seen as transitions and so the preconditions are met in a sense}.

\begin{example}
The identity of an apple on a table can be specified in a number of ways. For the general class of apples, a set of properties $X$ can be associated. The specific apple in question would also be possessing a set of specific properties $Z$ that include the distribution and intensity of colors. Obviously, many of the specific properties will not be true of apples in general. Further they would be in dialectical opposition to the general. Note that an agent can have multiple views of what $Z$ and $X$ ought to be and multiple agents would only contribute to the pluralism. In Hegel's perspective all of this dialectical contradiction must necessarily be resolved in due course (this process may potentially involve non materialist assumptions), while in the Marxist perspective a refined plural that may get resolved would be the result. Thus, the apple may be of the Alice variety of \emph{Malus Domestica} (a domestic apple) with many other specific features. The schematics (for an agent) is illustrated in \textsf{Fig. 1} - $\beth$ is a binary predicate with the intended meaning of being \emph{in dialectical opposition}.

To see how ideas of unary operations as dialectical negations can fail, consider the color of apples alone. If the collection of all possible colors of apples is known, then the set of all possible colors would be knowable. Negation of a white apple may be definable by complementation in this case. If on the other hand only a few possible sets of colors and some collections of colors are known, then the operational definition can fail (or lack in meaning). This justifies the use of a general binary predicate $\beth$ for expressing dialectical contradiction.

\begin{figure}[hbt]
\begin{center}
\begin{tikzpicture}[node distance=2.4cm, auto]
\node (A) {{\includegraphics{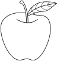}}};
\node (B) [right of=A]{$\mathbf{X}_1$};
\node (C) [above of=B] {$\mathbf{Z}_1$};
\node (H) [below of=A] {$\mathbf{X}_2$};
\node (K) [right of=H] {$\mathbf{Z}_2$};
\draw[->,font=\scriptsize,thick] (A) to node {General}(B);
\draw[->,font=\scriptsize,thick] (A) to node {Specific}(C);
\draw[->,font=\scriptsize,thick] (A) to node {General}(H);
\draw[->,font=\scriptsize,thick] (A) to node {Specific}(K);
\draw[-,dashed] (B) to node {$\beth$}(C);
\draw[-,bend right=-90, dashed] (H) to node {$\beth$}(C);
\draw[-,dashed] (H) to node {$\beth$}(K);
\end{tikzpicture}
\caption{Schematics of Dialectics of Identity}
\end{center}
\end{figure} 
 \end{example}

Da Costa et. al consider the heuristics of a possible dialectic logic in \cite{dw}. They seem to accept McGill's interpretation of unity of opposites \cite{mvp} and restrict themselves to a propositional perspective on the basis of difficulties with formalizing four of the six principles. This results in a very weak dialectical logic. They are however of the view that formal logics based on Marxist and Hegelian dialectics intersect the class of paraconsistent logics and there is great scope for deviation and that it can be argued that paraconsistent logics represent a desired amendment of  dialectics because of the latter's openness and non-rigid formalism.

The distinction between static and dynamic dialectical logics within the class of dialectical logics with dialectical contradiction as expressed with the help of an unary operation, may be attributed to Batens \cite{bd1,bd2}. Adaptive logics in that perspective would appear to be more general than dynamic dialectical logics; the main idea is to interpret inconsistencies as consistently as is possible. Key to this class of logics is the concept of tolerance of contradictory statements that are not necessarily reduced in their level of contradictions by way of proof mechanisms. Through this one can capture parts of the thesis-antithesis-synthesis meta principle. All semantic aspects of adaptive logics are intended in a classical perspective as opposed to dialethic logics and these are very closely connected to paraconsistent logics as well. Some Hegelian approaches (see \cite{al} for example) also seek to resolve universal contradictions.

Two of the most common misinterpretations or reductions of the concept of \emph{dialectical opposition} relate to excluding the very possibility of formalizing it and the reduction of dialectical negation to simple negation or opposites. Examples of the latter type are considered in \cite{hw,gj}. Some are of the view that Marx worked with normative ideas of concept and so introduction of related ideas in logic are improper. In modern terminology, Marx merely wanted concepts to be grounded in the material and was opposed to idealist positions that were designed for supporting power structures of oppression. This is reflected in Marx's position on Hegel's idealism and also, for example, on Wagner's position \cite{mecw24}. Marx and Hegel did not write about formal logic and the normative ideas of \emph{concept} used by both and other authors during the time can be found in great numbers. From the point of view of less normative (or non normative) positions  all these authors implicitly developed concepts at all times. It is also a reasonable position that Marxist methodologies should not be formalised independently of the normative restriction on possible ideas of concept afforded by actualization contexts. This is because it is always a good idea to have good grounding axioms for concepts to the extent that is permitted/possible by the context in question.   

In the present author's opinion for a methodology or theory to qualify as \emph{dialectic in Marx's sense} it is necessary that the idea of concepts used should be well grounded in the actualization contexts of the methodology or theory. In the context of reasoning with vagueness and rough sets, this may amount to requiring the approach be granular under specific conditions. 

In general, formal versions of dialectical logics can be based on some of the following principles/heuristics.
\begin{description}
\item[A]{Binary Logical predicates (that admit of universal substitution by propositional variables and well formed formulas) that are intended to signify binary dialectical contradiction are necessary for dialectical logics, }
\item[B]{Unary logical connectives (that admit of universal substitution by propositional variables and well formed formulas) suffice for expressing dialectical contradiction, }
\item[C]{The thesis-antithesis-synthesis principles must necessarily be included in the form of rules in any dialectical logic,}
\item[E]{Higher order quantifiers must be used in the logical formalism in an essential way because dialectical contradictions happen between higher order concepts,}
\item[F]{Dialectical logics should be realizable as fragments of first order predicate logic - this view is typically related to the position that higher order logics are superfluous.}
\item[G]{Dialectical contradiction in whatever representation must be present at each stage of what is defined to constitute dialectical derivation - this abstraction is due to the present author and is not hard to realise.}
\item[H]{All dialectic negations should be dialethic(*) in nature - this is a possibility explored in \cite{gp2006}. Dialethias are statements that can be both true and false simultaneously}
\item[I]{A logic that permits expression of progression of knowledge states is a dialectical logic.}
\item[J]{a first order logic perspective}
\item[K]{the point of view that dialectical contradiction can be expressed by binary predicates and not by unary operations}
\item[L]{Dialectical logics as paraconsistent logics incorporating contradictions or as inconsistency adaptive logics.}
\end{description}

Obviously many of these are mutually incompatible. [K] in particular is incompatible with [B] in the above. [I] leads to linear logics and variants. The meaning of dialectical logics that admit representation as a fragment of first order predicate logic will be naturally restricted. A few versions are known. Dynamic dialectical logics have been developed as inconsistency adaptive logics by Batens \cite{bd} in particular. In the present paper, \textsf{[A]} will be preferred as the binary predicate/predication cannot always be reduced to unary negations.

In general, it is obvious that given a dynamically changing subject, there will be at least a set of things which are dialectically contradictory to it in many ways. If $a$ is in dialectical contradiction to $b$ and $c$ in two different senses, then it is perfectly possible that $b$ is dialectically contradictory to $c$ in some other sense. Further if $X$ is dialectically contradictory to a conjunction of the form $Y\, \wedge\,Z$, then it is possible that $X$ is dialectically contradictory to $Y$ in some other sense and is virtually indifferent to $Z$.

\subsection {Dialectical Contradiction and Contradiction}

At a more philosophical level, the arguments of this section can be expressed in the language of functors \cite{ip}. However, a set-theoretic semantic approach is better suited for the present purposes. The concepts of contradiction and dialethic contradiction make essential use of negation operations (in some general sense), while that of dialectical contradiction when formulated on comparable terms does not necessarily require one. It is necessary to clarify the admissible negations in all this as many variations of the concept of logical negation are known and most are relevant for rough set contexts. 

Let $S$ be a partially ordered set with at least one extra partial unary operation $f$, a least element $\bot$ and a partial order $\leq$ ($\wedge, \, \vee$  being partial lattice infimum and supremum). In a partial algebra, two terms $p$ and $q$, are weakly equal, ($p\stackrel{\omega}{=}q$), if and only if \emph{whenever both terms are defined then they are equal}.
 Consider the following :
\begin{align}
\tag{N1}{ x\,\wedge\,f(x)\,\stackrel{\omega}{=}\,\bot  }\\
\tag{N2}{ (x\,\leq\,y\,\longrightarrow\,f(y)\,\leq\,f(x)) }\\
\tag{N3}{ x\,\leq\,f^{2}(x) }\\
\tag{N4}{ (x\,\leq\,f(y)\,\longrightarrow\,y\,\leq\,f(x)) }\\
\tag{N5}{ f^{n}(x)\,=\,f^{n+m}(x)  \text{ for some minimal } n,m\,\in\,N }\\
\tag{N6}{ f(x\,\vee\,y)\,\stackrel{\omega}{=}\,f(x)\,\wedge\,f(y) } \\
\tag{N9}{ x\,\wedge\,y\,=\,\bot\, \leftrightarrow\,y\,\leq\,f(x). } 
\end{align}

Distinct combinations of these non-equivalent conditions can characterise negations.
In \cite{at}, if an operation satisfies $N1$ and $N2$ over a distributive lattice, then it is taken to be a general negation. This is a reasonable concept for logics dealing with exact information alone as $N1$ does not hold in the algebras of vague logics, uncertain or approximate reasoning. For example it fails in the algebras of rough logic and generalizations thereof \cite{bc1,am24,am240,am9501}.

If $\forall{x}\,f^{m}(x)\,=\,f^{n}(x)$ holds, then the least $n$ such that $m\,<\,n$ is called the \emph{global period} of $f$,  $s\,=\,n\,-\,m$, the \emph{global pace} of $f$ and $(m,\,n)$, the index of $f$.
 
\begin{theorem}
When the poset is a distributive lattice in the above context, then the following are separately true:

\begin{enumerate}
\item {If $f$ satisfies $N1$ and $N2$, then the index $(0,\,n)$ for $n\,>\,2$ is not possible.}
\item {If $f$ satisfies $N1$, $N2$, and $N3$, then $f(\bot)\,=\,T$ is the greatest element of the lattice and $f(T)\,=\,\bot$.}
\item {$N1,\,N2,\,N3$ together do not imply $N9$}
\item {$N9$ implies each of $N1$, $N2$ and $N3$.}
\item {An interior operator $i$ on a poset is one that satisfies 
\begin{itemize}
\item {$i(x) \leq x$,}
\item {$(a \leq b \longrightarrow i(a) \leq i(b))$}
\item {$i(i(x))= i(x)$.}
\end{itemize}
If $f$ is a regular negation (that is it satisfies the conditions $N1$, $N2$ and $N3$) and $i$ an interior operator, then $g\,=\,if$ is a negation that satisfies $g^{4}\,=\,g^{2}$}
\end{enumerate}
\end{theorem}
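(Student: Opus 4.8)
The plan is to handle the five items separately, roughly in increasing order of difficulty: (2) and (4) are one–line deductions, (3) needs only a small finite model, and (1) and (5) each require one preliminary structural observation about $f$.

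For (2) I would argue directly from N2, N3: for any $x$, since $\bot \le f(x)$ antitonicity gives $f^{2}(x) \le f(\bot)$, and N3 gives $x \le f^{2}(x)$, so $x \le f(\bot)$ for all $x$; hence $f(\bot)$ is the greatest element $T$, and instantiating N1 at $T$ gives $T \wedge f(T) = \bot$, i.e. $f(T) = \bot$. For (4), each of N1, N2, N3 drops out of N9 by the right substitution: taking $y = f(x)$ in N9 and using $f(x) \le f(x)$ gives N1; given $x \le y$ one has $x \wedge f(y) \le y \wedge f(y) = \bot$ by N1, so N9 on the pair $(x, f(y))$ yields $f(y) \le f(x)$, which is N2; and N9 on $(f(x), x)$ together with $f(x) \wedge x = \bot$ yields $x \le f(f(x))$, which is N3. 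For (3) I would exhibit a model: let $L$ be the four–element Boolean lattice $\{\bot, a, b, T\}$ and set $f(\bot) = T$, $f(a) = f(b) = f(T) = \bot$. Then N1 and N2 are checked case by case, and N3 holds since $f^{2}(\bot) = \bot$ and $f^{2}(x) = T$ for $x \neq \bot$; but $a \wedge b = \bot$ while $b \not\le f(a) = \bot$, so N9 fails.

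For (1) the reduction is: index $(0,n)$ means $f^{n} = \mathrm{id}$ with $n$ minimal, so $f$ is a bijection of finite order $n$; if $|L| = 1$ the index is $(0,1)$, so assume $|L| > 1$. Now $f^{-1} = f^{n-1}$ is a composite of $n-1$ antitone maps, hence antitone for $n$ even and monotone for $n$ odd; in the odd case $\mathrm{id} = f \circ f^{-1}$ would be antitone, forcing $L$ to be an antichain and so a single point — contradiction. So $n$ is even, $f^{-1}$ is antitone, and then $f$ is a dual automorphism: from $f(a) \wedge f(b) \le f(a), f(b)$, applying the antitone $f^{-1}$ gives $f^{-1}(f(a) \wedge f(b)) \ge a \vee b$, whence $f(a \vee b) \le f(a) \wedge f(b)$, the reverse inequality being immediate. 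Since $f$ is onto, $f(\bot)$ dominates the whole range, so $f(\bot) = T$. Now N1 gives $x \wedge f(x) = \bot$; applying $f$ and the De Morgan law yields $T = f(\bot) = f(x \wedge f(x)) = f(x) \vee f^{2}(x)$, and replacing $x$ by $f^{n-1}(x)$ gives $x \vee f(x) = T$ for all $x$. Hence every $x$ has $f(x)$ as its complement; in a distributive lattice complements are unique, so $f(f(x))$ is the complement of $f(x)$, which is $x$, i.e. $f^{2} = \mathrm{id}$ — contradicting $n > 2$. The main obstacle is the clean handling of the dual–automorphism step and securing the top element $T$; after that the argument is short.

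For (5) I would first note that a regular negation automatically satisfies $f^{3} = f$: N3 at $x$ gives $x \le f^{2}(x)$, and applying the antitone $f$ gives $f^{3}(x) \le f(x)$, while N3 at $f(x)$ gives $f(x) \le f^{2}(f(x)) = f^{3}(x)$. Next I would show $p := f \circ i \circ f$ is a closure operator. It is monotone, and $p(x) = f(i(f(x))) \ge f(f(x)) \ge x$ using $i(f(x)) \le f(x)$, antitonicity, and N3; for idempotence, writing $w = i(f(x))$ one has $w \le f^{2}(w) \le f^{2}(f(x)) = f^{3}(x) = f(x)$, so $w = i(w) \le i(f^{2}(w)) \le i(f(x)) = w$, forcing $i(f^{2}(w)) = w$ and hence $p^{2}(x) = f(i(f^{2}(w))) = f(w) = p(x)$. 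Finally, from the definitions $g^{2} = i \circ p$, so $g^{4} = i \circ p \circ i \circ p$; for any $x$ put $z = p(x)$, so $p(z) = z$. Then $i(z) \le z$ gives $p(i(z)) \le p(z) = z$, hence $i(p(i(z))) \le i(z)$; and $p(i(z)) \ge i(z)$ gives $i(p(i(z))) \ge i(z)$. Therefore $g^{4}(x) = i(p(i(z))) = i(z) = i(p(x)) = g^{2}(x)$, i.e. $g^{4} = g^{2}$. Overall I expect (1) to be the real work; (2), (3), (4) are routine, and (5) collapses once $f^{3} = f$ and the closure property of $p$ are in hand.
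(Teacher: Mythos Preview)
The paper states this theorem without proof (it is apparently imported from the cited source on negations), so there is no argument in the text to compare yours against. Your proof is essentially correct and self-contained; I note only two small slips.

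In part~(1), after obtaining $f^{-1}(f(a)\wedge f(b)) \ge a\vee b$ you write ``whence $f(a\vee b) \le f(a)\wedge f(b)$''. Applying the antitone $f$ to that inequality actually yields the reverse, $f(a)\wedge f(b) \le f(a\vee b)$; it is the inequality $f(a\vee b) \le f(a)\wedge f(b)$ that is the one ``immediate'' from antitonicity alone. The two together still give $f(a\vee b) = f(a)\wedge f(b)$, and the dual $f(a\wedge b)=f(a)\vee f(b)$ (which is the De~Morgan law you actually invoke on $x\wedge f(x)$) follows by the symmetric argument. So the substance is right; only the labelling of which direction is ``derived'' and which is ``immediate'' is swapped.

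In part~(5), the statement asserts that $g=i\circ f$ is itself a \emph{negation} (i.e.\ satisfies N1 and N2), not merely that $g^4=g^2$. You should add the one-line check: $g$ is antitone as the composite of a monotone and an antitone map, and $x\wedge g(x) \le x\wedge f(x) = \bot$ since $g(x)=i(f(x)) \le f(x)$. The rest --- the $f^3=f$ identity, the verification that $p=f\circ i\circ f$ is a closure operator, and the final $g^4 = i\,p\,i\,p = i\,p = g^2$ computation --- is clean and correct.
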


Even at these levels of generality, the generalised negations can fail to express the appropriate concept of dialectical contradiction. 

In a set-theoretical perspective, if a set of things $A$ is dialectically opposed to another set $B$, then it may appear reasonable to expect $A$ to contain the set of things dialectically opposed to $B$. Subject to this intent, the set of \emph{all} things dialectically opposed to $A$ would need to be expressed by $\sim A$. But in dialectical reasoning it will still be reasonable to say that $A$ is dialectically opposed to some part of $\sim A$. For this the use of a unary $\sim$ can be glaringly insufficient. This is true not only from an algebraic system point of view (when working within a model) but also from perspectives generated by admissible sets of models. $N2$ is inherently incompatible with accepting $f$ as a unary dialectical negation operator, especially when a lattice order is expected to be induced by some concept of logical equivalence from the order. $N3$ is perhaps the most necessary property of a dialectical negation operation.

Well-formed formulas of certain derived types alone may admit of a negation (in the sense of being equivalent to one of the negatives). Such a negation is partial. For instance, $\sim\,\sim\,x$ may not be defined in the first place, and some of $N1-N9$ may hold for such negations. Using such types of negation for expressing dialectical contradictions through compound constructions may be possible in adequately labeled deductive systems.

Dialethic logics are logics that tolerate contradictions and accept true contradictions. To be more specific a dialetheia is a pair of statements having the form $A \& \neg A$ with each of $A$ and $\neg A$ being true. These statements may be interpreted sentences expressed in some language (that may be natural language or a language of thought, or anything). They can be used to formalize only some restricted cases of dialectical reasoning in which a unary dialectical contradiction operation is possible. It is also possible to reformulate some dialectical contexts as a dialethic process. Priest \cite{gp2} had indicated the possibility of using dialethic logics as a base for dialectic logics. In \cite{gp5}, Priest develops a dialectical tense logic, where it is possible for a system to exist in both pre and post states during (at the instant of) state transitions. Zeleny \cite{zj1} in particular has correctly pointed out (from a philosophical perspective) the possible shortcomings of a unary negation based approach. Though the issue of desiring incompatibility between classical logic and a possible dialectical logic is not a justified heuristic. The essential dialetheism principle is however usable in dialectical derivations. Such situations would allow dialectical opposition between proof patterns naturally. 

The nature and meaning of negation in a dialethic logic is explained in \cite{gp1,gp2}. From a philosophical meta perspective the negation of a formula is possibly a collection of formulas that may be represented by a single formula (from a logical perspective). It is with respect to such a \emph{negation} that dialethic logics must tolerate contradictions and accept true ones. A survey of concepts of contradictions for dialetheism can also be found in \cite{gp}. Using any kind of universal paraconsistent system for describing inconsistencies is virtually shown to be an undesirable approach in \cite{bd2}.  Marxist dialectics is perceived from a dialetheistic perspective of things in \cite{wj}. It is claimed that dialetheias correspond to and realise the concept of historical contradiction. The methodological aspect of Marxist dialectics is also ignored by Priest (see \cite{gp1990}) to the point that \emph{dialectic is a dialetheia}. There are no methodological strictures associated with dialetheias except for the requirement that they be real. This approach ignores 
\begin{itemize}
\item {the world view associated with Hegelian-Marxist dialectics,}
\item {the principle of unity of opposites, and}
\item {the basic problem with formalizing dialectical opposition with a unary operator - this is because the negation in dialectics is transient and dependent on the above two points.}
\end{itemize}

In the present author's view dialetheias do exist in the real world and they may be the result of 
\begin{itemize}
\item {missing data/information}
\item {a deliberate disregard for consistency. For example, a large number of people in the news media, religion and politics practice dialethic expression of a crude form and deceit. They may have their motivations for such actions, but those would not be justification for their dialetheias. It may also be possible to construct equivalent models or models with additional information that do not have true contradictions. 
For example, the statement \emph{$X$ has property $Q$ and $\neg Q$} may be replaced by the statements \emph{$X$ has property $Q$ in state $A$} and \emph{$X$ has property $\neg Q$ in state $B$}. This amounts to interfering with the data and does not really change the state of affairs. Many religious functionaries have been convicted of sexual crimes and most were in harmony with their apparent dialethic behavior (religious texts may be full of contradictions, that only allows for prolonging the derivations). }
\end{itemize}

The present author agrees with Priest's claims about dialetheias being not resolvable by revision of concepts and that they are better handled as such \cite{gp2006,gp1990,gp2014}. However, she does not agree at all that the proper way of formalizing Hegelian-Marxist dialectics in all cases is through dialetheias. A mathematical formulation of the issue for many sub cases may be possible through rough sets. The \emph{Cold vs Influenza} example considered in the subsection on examples of parthoods throws much light on the matter.

\subsection{Dialectical Predication}

At a philosophic level, dialectical predication is a relation between functors in the sense of \cite{ip}. At a formal model-theoretic semantic level, the best realization is through a binary dialectical predicate $\beth$, that may have limited connections with negation operations (if any). The basic properties that are necessary (not sufficient) of the predicate are as follows (with $\oplus$ standing for aggregation):

\begin{align}
 \beth(a, b) \longleftrightarrow \beth(b, a) \tag{Commutativity}\\
 \neg\, \beth (a, a) \tag{Anti-Reflexivity} \\
 \beth(a,b) \longrightarrow \beth(a\oplus c , b\oplus c) \tag{Aggregation}
\end{align}

This predicate may be related to unary dialectical negation operations in a simple or complex way. One possibility (that leads to $N4$) is the following (for two predicates $P$ and $Q$):
\[\beth (a, b) \text{ iff } P(a) \implies \neg Q(b) \]

That the definitions are important is illustrated by the following example.

Let $\{x_n \}$ be a sequence of real numbers. In contexts where reasoning proceeds from the concrete to a general, let 
\begin{itemize}
\item {\textsf{A} be the statement that \emph{Limit of the sequence is not conceivable}}
\item {\textsf{B} be the statement that \emph{Statement A is conceivable}}
\item {\textsf{C} be the statement that \emph{As B is true, the limit of the sequence is conceivable}.}
\end{itemize}

\textsf{A} is dialectically opposed to \textsf{C}, but the scenario does not amount to a dialetheia if the entire context has enough information of the process (of \textsf{B} being true) being referred to by \textsf{C}.

\begin{example}[Dialectics from Classification]

The intent of this example is to show that 
\begin{itemize}
\item {strategies for classification of information can be dialectically opposed to each other and}
\item {such information can fit into the rough set paradigm without the involvement of dialetheias.}
\end{itemize}

From an abstract perspective, consider a general process or phenomena $C$ described in abstract terms $A_1, \ldots , A_f$. Let every extension of the process have additional peculiar properties that lead to not necessarily independent classifications $C_1, C_2, \ldots , C_n$. Also let the categories $C^*_1, C^*_2, \ldots , C^*_r$ be formed by way of interaction between the members of said classes. This scenario leads to instances of parthood like $\pc A_1 C_1$ and $\pc C_1 C^*_1$ with $\pc C_1 C^*_1$ being in dialectical opposition to $\pc A_1 C_1$. Concrete instances of development over these lines are easy to find. In fact the historical development of any subject in the social sciences that has witnessed significant improvement over the last thirty years or so would fit under this schema. Two diverse contexts where such a dynamics may be envisioned are presented next.

Models and methods used for income estimation of rural agrarian households manage agrarian relations in different ways \cite{ms2015}. Long term ground level studies are required to clarify the nature of these relations. Suppose $C$ is about estimating poverty in a village and $A_1, A_2, A_3$ are abstract categories based on volume of monetary transaction by farmers. Some economists may use this for estimating net income and as an indicator of absence of poverty, while in reality farmers may be having negative income or the sources may not be reliable.

Suppose, an improved classification $C_1, C_2,$ $ C_3, C_4, C_5$ has been arrived at based on estimates of investment, expenditure, consumption, exchange of labor and other factors through ground level studies. The resultant classification may need to be improved further to take non monetary transactions like barter of goods and resources into account. Thus, $C_1^*, C_2^*, $ $C_3^*, C_4^*, C_5^*$  may be arrived at.

$C_1$ definitely takes $A_1$ into account and the latter is a causative factor for the former. This can be expressed by the parthood $\pc A_1 C_1$. $C_1$ is a much stronger causative factor of $C_1^*$. $\pc C_1 C_1^*$ then is dialectically opposed to $\pc A_1 C_1$. Such a relation can be used to track the context dynamics.

The subject of lesbian sexuality in particular has progressed significantly over the last few decades and can be expressed in similar abstract perspectives (see \cite{bz2000} and more recent literature). Women love women in different ways and this variation is significant for sub-classification. The parameters of classification relate to gender expression, sexual state variation, sexual performance, preferences in sexual interaction, routines, mutual communication, lifestyle choices, related social communication and more.   

\end{example}

\section{Dialectical Rough Sets}\label{cer}

A dialectical approach to rough sets was introduced and a more general program was formulated in \cite{am699} by the present author. Multiple kinds of roughly equivalent objects and the dialectical relation between them are stressed in the approach. This is reflected in the two algebraic logics proposed in the mentioned paper. The entire universe is taken to be a dialectical relation in the second semantics and possible derivations revolve around it. The main intent  was to include mixed kinds of objects in the semantics and so ideas of contamination apply differently. The essential content is repeated below (as \cite{am699} is a conference paper) and the nature of some possible parthoods involved is defined below.

A \emph{pre-rough algebra} \cite{bc1} is an algebra having the form \[S\,=\,\left\langle\underline{S}, \sqcap, \sqcup, \Rightarrow, L, \neg, 0, 1\right\rangle\] of type $(2,2,2,1,1,0,0)$, which satisfies:
\begin{itemize}
\item {$\left\langle\underline{S},\sqcap,\sqcup,\neg\right\rangle$ is a De Morgan lattice.}
\item {$\neg\neg{a}=a \,;\, L(a)\,\sqcap\,{a}=L(a)$}
\item {$ L(a\,\sqcup\, b)=L(a)\,\sqcup \,L(b)\,;\,\neg{L}\neg{L}(a)=L(a)$}
\item {$LL(a)=L(a)\, ;\,\ L(1)=1\, ;\, L(a\sqcap{b})=L(a)\sqcap L(b) \,;\, \neg{L}(a)\sqcup{L}(a)=1$}
\item {If $L(a)\sqcap{L(b)}=L(a)$ and $\neg{L(\neg(a\sqcap{b}))}=\neg{L}(\neg{a})$ then ${a\sqcap{b}=a}$. This is actually a quasi equation.}
\item {${a\Rightarrow{b}}=(\neg{L}(a)\sqcup{L}(b))\sqcap(L(\neg{a})\sqcup\neg{L}(\neg{b}))$.}
\end{itemize}
A completely distributive pre-rough algebra is called a \emph{rough algebra}. In all these algebras it is possible to define an operation $\diamond$ by setting $\diamond (x)\,=\,\neg{L}\neg(x)$ for each element $x$.

It should be noted that above definition has superfluous conditions. An equivalent definition that was used in \cite{am501} (based on \cite{ajm2015}) is the following: 

\begin{definition}
An \emph{essential pre-rough algebra} is an algebra of the form \[E = \left\langle\underline{E}, \sqcap, L, \neg, 0, 1  \right\rangle\]  that satisfies the following (with $\sqcup$ being a defined by $(\forall a, b)\, a\sqcup b = \neg (\neg a \sqcap \neg b) $ and $a \leq b$ being an abbreviation for $a\sqcap b = a$.)
\begin{description}
\item []{$\left\langle\underline{E}, \sqcap, \sqcup, \neg, 0, 1  \right\rangle$ is a quasi Boolean algebra.}
\item [E1]{$L1= 1$}
\item [E2]{$(\forall a)\, La \sqcap a = La $}
\item [E3]{$(\forall a, b)\,L(a\sqcap b) = L(a) \sqcap L(b)$}
\item [E4]{$(\forall a)\,\neg L \neg L a = La$}
\item [E5]{$(\forall a )\,\neg L a\sqcap La = 0$}
\item [E6]{$(\forall a, b)\, (\neg L \neg a \leq \neg L \neg b  \,\&\, La \leq Lb \longrightarrow a \leq b )$}
\end{description}
An essential pre-rough algebra is said to be an \emph{essential rough algebra} if $L(E)$ is also complete and completely distributive- that is it satisfies (for any subset $X$ and element $a$) \[a \sqcup (\bigsqcap X) = \bigsqcap \{a\sqcup x : x\in X\} \, \&\, a \sqcap (\bigsqcup X) = \bigsqcup \{a\sqcap x : x\in X\}  \]  
\end{definition}

Essential pre-rough algebras are categorically equivalent to pre-rough algebras and essential rough algebras to rough algebras.

In this semantics explicit interaction between objects in the rough semantic domain and entities in the classical semantic domain is permitted. The requirement of explicit interaction is naturally tied to objects having a dual nature in the relatively \emph{hybrid} semantic domain. Consequently an object's existence has dialectical associations. In application contexts, this approach can also be useful in enriching the interaction within the rough semantic domain with additional permissible information from the classical semantic domain. 

Suppose $S_1,\, S_2, \, S_3,$ and $S_4$ are four general approximation spaces. Suppose that the semantics of $S_2$ relative to $S_1$ and $S_4$ relative to $S_3$  are definable in a semantic domain. The question of equivalence of these \emph{relativizations} is relevant. It may be noted that the essential problem is implicit in \cite{am240}. The hybrid dialectical approach is relevant in these contexts. But of course, this approach is not intended to be compatible with contamination.

In \emph{rough algebra semantics} it is not possible to keep track of the evolution of rough objects relative to the classical semantics suggested by the Boolean algebra with approximation operators. Conversely in the latter it is not possible to form rough unions and rough intersections relative to the \emph{rough algebra} semantics. These are examples of relative distortions. The CERA semantics (\emph{concrete enriched pre-rough algebra}), which is developed in the next subsection can deal with this, but distortions relative to \emph{super rough semantics} (\cite{am3}) are better dealt with CRAD (\emph{concrete rough dialectical algebra} introduced in the last subsection) like methods. For more on these considerations, the reader is referred to \cite{am240} and in the \emph{three-valued perspective} to \cite{pp2000,pp1998}.  In \cite{pp2000}, a three-valued sub domain and a classic sub domain formed by the union of the singleton granules of the classification are identified within the  rough domain itself.

Jaskowski's discursive logic is an example of a subvaluationary approach in that it retains the non truth of contradictions in the face of truth-gluts. Connections with pre-rough, rough algebras and rough logics are well known (see \cite{bcb}). In particular, Pawlak's five valued rough logic $R_{l}$ (see \cite{zp0}) and $L_{R}$ (\cite{bcb}) are not dialethic: though it is possible to know that something is roughly true and roughly false at the same time, it is taken to be \emph{roughly inconsistent} as opposed to being just \emph{true} or \emph{roughly true}. This rejection definitely leads to rejection of other reasoning that leads to it as conclusion.  Importantly a large set of logics intended for capturing rough semantics are paraconsistent and make use of skewed forms of conjunction and disjunction. It can be argued that the latter feature is suggestive of incomplete development of the logics due to inconsistencies in the application of the underlying philosophy (see \cite{hc} for example). The 4-valued DDT (see \cite{at}) addresses some of these concerns with a justification for 3-valuedness in some semantics of classical RST. The NMatrix based logic (\cite{ak}) provides a different solution by actually avoiding conjunction and disjunction operations (it should, however, be noted that conjunctions and disjunctions are definable operations in the NMatrix based logic). In super rough semantics, due to the present author \cite{am3}, the ability of objects to approximate is called in. These concerns become more acute in the semantics of more general rough sets. 

In summary, the main motivations for the approach of this section are
\begin{itemize}
\item {to provide a framework for investigating relative distortions introduced by different theories - this is in sharp contrast to the contamination reduction approach \cite{am240,am3} of the present author,}
\item { to improve the interface between rough and classical semantic domains in application contexts,}
\item {to investigate \emph{relativization of semantics} in the multi source general rough contexts (or equivalently in the general dynamic approximation contexts) - in \cite{am909} a distinct semantic approach to the problem is developed by the present author,}
\item {address issues relating to truth and parthood at the semantic level,}
\item {and develop a  dialectical logic of rough semantics.}
\end{itemize}

The nature of parthood was not considered in the context by the present author at the time of writing \cite{am699}.  It is considered here to specify the nature of dialectical oppositions and potential diagrams of opposition.  

\subsection{Enriched Classical Rough Set Theory}

Let $S\,=\,\left\langle \underline{S},\,R\right\rangle $ be an approximation space with $\underline{S}$ being a set and $R$ an equivalence. $S$ will be used interchangeably with $\underline{S}$ and the intended meaning should be clear from the context. If $A\,\subset S$, $A^{l}\,=\,\bigcup\{[x]\,;\,[x]\,\subseteq\,A\}$ and $A^{u}\,=\,\bigcup\{[x]\,;\,[x]\,\cap\,A\,\neq\,\emptyset\}$ are the \emph{lower} and \emph{upper approximation} of $A$ respectively. If $A,\,B\,\in\,\wp (S)$, then $A$ is \emph{roughly equal} to $B$ ($A\,\approx\,B$) if and only if $A^{l}\,=\,B^{l}$ and $A^{u}\,=\,B^{u}$. $[A]$ shall be the equivalence class (with respect to $\approx$) formed by a $A\,\in\,\wp (S)$. 

The proposed model may be seen as an extension of the pre-rough and rough algebra models in \cite{bc1}. Here the  base set is taken to be $\wp (S)\,\cup\,\wp (S)|\approx$ as opposed to $\wp (S)|\approx$ (used in the construction of a rough set algebra). The new operations $\oplus$, and $\odot$ introduced below are correspond to generalised aggregation and commonality respectively in the mixed domain. This is not possible in classical rough sets proper.

\begin{definition}
On $Y\,=\,\wp (S)\,\cup\,\wp (S)|\approx$, the operations $\ml,\,\oplus,\,\odot,\,\mm,\,\rightsquigarrow,\,\twoheadrightarrow,\,\sim$ are defined as follows: (it is assumed that the operations $\cup,\,\cap,\,^{c},\,^{l},\,^{u}$ and $\sqcup,\,\sqcap,\,L,\,M,\,\neg,\,\Rightarrow $ are available on $\wp (S)$ and $\wp(S)|\approx$ respectively. Further \[\tau_{1} x \Leftrightarrow x\in \wp(S) \text{ and } \tau_{2}x \Leftrightarrow x\in \wp(S)|\approx .\]
\begin{itemize}
\item {
\[\ml x = \left\lbrace  \begin{array}{ll}
 x^{l} & \text{if } \tau_{1}x\\
 Lx & \text{if } \tau_{2}x\\
 \end{array} \right.\]
}
\item { 
\[\blacklozenge x = \left\{ \begin{array}{ll}
 x^{u} & \text{ if } \tau_{1}x \\
 \neg L \neg x & \text{ if } \tau_{2}x \\
 \end{array} \right. \]
 }
\item { 
\[ x \oplus y = \left\{ \begin{array}{ll}
 x\,\cup\,y & \text{ if } \tau_{1}x,\,\tau_{1}y \\
\left[ x\,\cup\,(\bigcup_{z\,\in\,y} z)\right]  & \text{ if } \tau_{1}x,\,\tau_{2}y \\
\left[ (\bigcup_{z\,\in\,x} z)\,\cup\,y\right]  & \text{ if } \tau_{2}x,\,\tau_{1}y  \\
 x\,\sqcup\,y & \text{ if } \tau_{2}x,\,\tau_{2}y 
 \end{array} \right.\]
 } 
\item { 
\[ x \odot y = \left\{ \begin{array}{ll}
 x\,\cap\,y & \text{ if } \tau_{1}x,\,\tau_{1}y \\
 \left[ x\,\cap\,(\bigcap_{z\,\in\,y} z)\right]  & \text{ if } \tau_{1}x,\,\tau_{2}y \\
 \left[ (\bigcap_{z\,\in\,x}\,z)\,\cap\,y\right]  & \text{ if } \tau_{2}x,\,\tau_{1}y  \\
 x\,\sqcap\,y & \text{ if } \tau_{2}x,\,\tau_{2}y 
 \end{array} \right.\]
 }
\item { 
\[ \sim x\, = \left\{ \begin{array}{ll}
 x^{c} & \text{ if } \tau_{1}x \\
 \neg x & \text{ if } \tau_{2}x 
 \end{array} \right. \]
 }
\item { 
\[ x \rightsquigarrow y = \left\{ \begin{array}{ll}
 x\,\cup\,y^{c} & \text{ if } \tau_{1}x,\,\tau_{1}y \\
 \left[\bigcup_{z\,\in\,y} (x\,\cup\, z^{c})\right] & \text{ if } \tau_{1}x,\,\tau_{2}y \\
 x\implies y & \text{ if } \tau_{2}x,\,\tau_{2}y \\
 \left[\bigcup_{z\,\in\,x}(z\,\cup\,y^{c})\right] & \text{ if } \tau_{2}x,\,\tau_{1}y 
  \end{array} \right.\]
 }
\item { 
 \[ x \twoheadrightarrow y = \left\{ \begin{array}{ll}
 [x\,\cup\,y^{c}] & \text{ if } \tau_{1}x,\,\tau_{1}y \\
 \left[\bigcup_{z\,\in\,y} (x\,\cup\, z^{c})\right] & \text{ if } \tau_{1}x,\,\tau_{2}y \\
 x\implies y & \text{ if } \tau_{2}x,\,\tau_{2}y \\
 \left[\bigcup_{z\,\in\,x}(z\,\cup\,y^{c})\right] & \text{ if } \tau_{2}x,\,\tau_{1}y 
 
 \end{array} \right.\]
 } 
\end{itemize}
\end{definition}

It should be noted that $\odot$ is a very restrictive operation (because the commonality is over a class) when one of the argument is of type $\tau_1$ and the other is of type $\tau_2$. An alternative is to replace it with $\circ$ defined by the Eqn.\ref{circ}.

\begin{equation}\label{circ}
x\,\circ a = \left\{ \begin{array}{ll}
 x\,\cap\,a & \text{ if } \tau_{1}x,\,\tau_{1}a \\
 \left[ x\,\cap\,(\bigcup_{z\,\in\,a} z)\right]  & \text{ if } \tau_{1}x,\,\tau_{2}a \\
 \left[ (\bigcup_{z\,\in\,x}\,z)\,\cap\,a\right]  & \text{ if } \tau_{2}x,\,\tau_{1}a  \\
 x\,\sqcap\,a & \text{ if } \tau_{2}x,\,\tau_{2}a 
 \end{array} \right. 
\end{equation}

\begin{definition}
In the above context a partial algebra of the form \[W=\left\langle \underline{\wp (S)\cup \wp(S)|\approx},\,\neg,\,\sim,\,\oplus,\,\odot,\,\mm,\,\ml,\,0,\,1,\,\bot,\,\top \right\rangle\] of type $(1,\,1,\,2,\,2,\,1,\,1,\,0,\,0,\,0,\,0)$ is a \emph{concrete enriched pre-rough algebra}\\ (CERA) if a pre-rough algebra structure is induced on $\wp(S)|\approx$. \emph{Concrete enriched rough algebras} can be defined in the same manner. If the approximation space is $X$, then the derived CERA will be denoted by $\mathfrak{W}(X)$. Note that the two implication-like operations are definable in terms of other basic functions. A CERA in which $\odot$ has been replaced by $\circ$ is said to be a \emph{soft} CERA.
\end{definition}

\begin{proposition}
CERAs are well defined because of the representation theory of pre-rough algebras. 
\end{proposition}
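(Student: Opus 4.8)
The plan is to reduce the statement to two independent verifications: that the hypothesis in the definition of a CERA (``a pre-rough algebra structure is induced on $\wp(S)|\approx$'') is automatically met for the concrete construction over a classical approximation space, and that every operation of $W$ is a well-defined (partial) operation on $Y=\wp(S)\cup\wp(S)|\approx$ with the declared arity and output component.

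For the first point, I would invoke the representation theory of pre-rough algebras \cite{bc1}, one half of which asserts that for every approximation space $\langle S,R\rangle$ with $R$ an equivalence the quotient $\wp(S)|\approx$, equipped with $L[A]=[A^{l}]$, $\neg[A]=[A^{c}]$, $[A]\sqcap[B]=[A\cap B]$, $[A]\sqcup[B]=[A\cup B]$, the derived $\Rightarrow$, and constants $0=[\emptyset]$, $1=[S]$, is a pre-rough algebra (these are precisely the standard models into which the theorem embeds arbitrary pre-rough algebras). The only thing needing checking here is that $\approx$ is a congruence for these operations, i.e. that the displayed clauses do not depend on the chosen representatives; this is exactly the content that the representation theorem packages, and it is where all the substantive work already resides. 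Consequently the pre-rough structure required by the definition is present on the $\wp(S)|\approx$-component, and $\mathfrak{W}(X)$ exists for every approximation space $X$.

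For the second point I would go operation by operation, but only the cross-type cases are non-routine. The operations $\ml$, $\mm$, $\sim$ are defined by a case split on a single argument: on the $\tau_{1}$-component they are the ordinary ${}^{l}$, ${}^{u}$, ${}^{c}$, and on the $\tau_{2}$-component they use only $L$, $\neg L\neg$, $\neg$, already known from the previous paragraph to respect $\approx$. The genuinely new cases are those of $\oplus$, $\odot$ (and $\circ$), $\rightsquigarrow$, $\twoheadrightarrow$ with one $\tau_{1}$- and one $\tau_{2}$-argument: there one forms $\bigcup_{z\in y}z$ or $\bigcap_{z\in y}z$, the union (resp. intersection) of \emph{all} subsets of $S$ lying in the equivalence class $y$. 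This is a single element of $\wp(S)$ determined by $y$ with no choice involved; combining it with the $\tau_{1}$-argument via $\cup$, $\cap$, ${}^{c}$ again yields an element of $\wp(S)$, to which $[\,\cdot\,]$ is applied to land in $\wp(S)|\approx$. The same-type cases are the plain set operations on $\wp(S)$ or the operations induced on $\wp(S)|\approx$. Hence each operation has exactly the arity listed in the signature of $W$ and maps into the declared component of $Y$, so $W$ is a bona fide partial algebra carrying the pre-rough structure on its $\wp(S)|\approx$-part — that is, a CERA — and $X\mapsto\mathfrak{W}(X)$ is well-defined. The identical argument with $\odot$ replaced by $\circ$ covers soft CERAs, and restricting to those $X$ whose quotient is complete and completely distributive covers concrete enriched rough algebras.

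I expect the main (and essentially only) obstacle to be bookkeeping rather than difficulty: one must keep the two pairs of constants apart — the bounds $0,1$ inherited from $\wp(S)|\approx$ versus $\bot,\top$ inherited from $\wp(S)$ — and be careful that the cross-type operations are read as unions/intersections over an entire class rather than as requiring a choice of representative. Once the representation theorem is cited to secure the congruence properties on $\wp(S)|\approx$, nothing deeper is required, which is precisely the point the proposition is making.
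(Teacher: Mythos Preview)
Your proposal is correct and aligns with the paper's intent: the paper offers no separate proof for this proposition, treating the clause ``because of the representation theory of pre-rough algebras'' as the entire justification. Your unpacking --- citing \cite{bc1} for the fact that $\wp(S)|\approx$ with the indicated operations is a pre-rough algebra (so $\approx$ is a congruence for the $\tau_2$-side), and then checking that the cross-type clauses of $\oplus,\odot,\rightsquigarrow,\twoheadrightarrow$ involve no choice of representative --- is exactly the content the paper is gesturing at, only made explicit.
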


\begin{theorem}
A CERA satisfies all the following: (The first two conditions essentially state that the $\tau_{i}$s are abbreviations)
\begin{gather*}
 \tag{type-1 } (x \rightsquigarrow x = \top  \longleftrightarrow \tau_{1} x)  \\
 \tag{type-2 } (\neg x = \neg x  \longleftrightarrow \tau_{2} x )  \\
 \tag{ov-1 } \sim \sim x = x   ;\;   \ml \ml x = \ml x    ;\;  \blacklozenge \ml x = \ml x   \\
 \tag{ov-2 } \ml x \oplus x = x    ;\;   \ml x \odot x = \ml x    ;\;  \blacklozenge x \oplus x =\blacklozenge x    ;\;  \blacklozenge x \odot x = x  \\
 \tag{ov-3 } \ml\blacklozenge x =\blacklozenge x    ;\;   x  \oplus x = x    ;\;   x \odot x = x   \\
 \tag{qov-1 } (\tau_{1}x \longrightarrow \sim x \oplus   x = \top)   ;\;   (\tau_{2}x \longrightarrow \sim \ml x \oplus \ml x = 1)  \\
 \tag{qov-2 } \sim \bot = \top   ; \;  \sim 0 = 1  \\
 \tag{u1 } x \oplus (x \oplus (x \oplus y)) = x \oplus (x \oplus y)    ;\;   x \odot (x \odot (x \odot y)) = x \odot (x \odot y)   \\
 \tag{u2 } x \oplus y = y \oplus x  ;\;   x \odot y = y \odot x  \\
 \tag{ter(i1) } (\tau_{i}x, \tau_{i}y, \tau_{i}z \longrightarrow x \oplus (y \oplus z) = (x \oplus y) \oplus z) ; i=1,2\\
 \tag{ter(i2} (\tau_{i}x, \tau_{i}y, \tau_{i}z \longrightarrow x \oplus (y \odot z) = (x \oplus y)\odot (x \oplus z)) ; i=1,2 \\
 \tag{ter(i3} (\tau_{i}x, \tau_{i}y, \tau_{i}z \longrightarrow x \odot (y \odot z) = (x \odot y) \odot z)   ; i=1,2\\
 \tag{bi(i) } (\tau_{i}x, \tau_{i}y \longrightarrow x \oplus (x \odot y) = x,  \sim (x \odot y) = \sim x \oplus \sim y)   ; i=1,2 \\
 \tag{bm } (\tau_{1}x, \tau_{2}y, x \oplus y = y \longrightarrow\blacklozenge x \oplus y = y)  \\
 \tag{hra1 } (\tau_{1}x, (1 \odot x = y) \vee (y = x \oplus 0)\longrightarrow \tau_{2}y)   
\end{gather*}
\end{theorem}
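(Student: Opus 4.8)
The plan is to prove the theorem by a case analysis on the types ($\tau_1$, i.e. membership in $\wp(S)$, versus $\tau_2$, i.e. membership in $\wp(S)|\approx$) of the free variables occurring in each listed statement, reducing every case to one of three situations: (a) all arguments are of type $\tau_1$, where the statement becomes an identity of the Boolean set algebra $\langle\wp(S),\cup,\cap,{}^{c},\emptyset,S\rangle$; (b) all arguments are of type $\tau_2$, where it becomes an axiom or a routine consequence of the pre-rough algebra structure on $\wp(S)|\approx$ (legitimate by the representation theory of pre-rough algebras invoked just above, i.e. $\wp(S)|\approx$ really is a pre-rough algebra for a classical approximation space); or (c) a genuinely mixed-type computation, for which the one tool needed is the elementary lemma that for $A\in\wp(S)$ one has $\bigcup_{z\in[A]}z = A^{u}$ and $\bigcap_{z\in[A]}z = A^{l}$, together with the classical facts $A^{l}\subseteq A\subseteq A^{u}$, $(P\cup Q)^{u}=P^{u}\cup Q^{u}$, and that $A^{l}, A^{u}$ are unions of $R$-classes, hence definite.

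First I would fix the reading of the constants: $\bot=\emptyset$ and $\top=S$ sit in the $\tau_1$-part, $0=[\emptyset]$ and $1=[S]$ in the $\tau_2$-part, and these parts are disjoint, so $\tau_1 x$ and $\tau_2 x$ are mutually exclusive and jointly exhaustive. The conditions type-1 and type-2 then follow at once: $\neg$ is defined only on $\tau_2$-elements, so ``$\neg x = \neg x$'' holds exactly when $\tau_2 x$; and $x\rightsquigarrow x$ evaluates to $x\cup x^{c}=S=\top$ when $\tau_1 x$, while for $\tau_2 x$ it is the pre-rough value $x\Rightarrow x = 1\neq\top$. This establishes that the $\tau_i$ are abbreviations definable in the language.

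Next come the bulk of the items, which are routine once split. The type-restricted statements ter(i1), ter(i2), ter(i3), bi(i) are, for $i=1$, the associativity, distributivity, absorption and De Morgan laws of a Boolean algebra, and for $i=2$ the same laws in the (distributive, quasi-Boolean) pre-rough algebra. The ``overlap'' statements ov-1, ov-2, ov-3, qov-1, qov-2, u2 and the single-variable part of u1 split into a $\tau_1$ half — immediate from $x^{l}\subseteq x\subseteq x^{u}$, idempotence of $l$, definiteness of $x^{l}$ and $x^{u}$, and $x\cup x^{c}=S$ — and a $\tau_2$ half, which uses only $La\le a$ (E2), $LLa=La$, $LMa=Ma$ (dual of E4), $\neg La\sqcup La=1$ (E5), and $\neg\neg a=a$. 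Commutativity u2 is visible directly from the mirror-image shape of the defining clauses of $\oplus$ and $\odot$. The only authentically cross-type computations are the mixed cases of u1, bm, and hra1. For u1 with $\tau_1 x$, $\tau_2 y=[A]$ one computes $x\oplus y=[x\cup A^{u}]$, then $x\oplus(x\oplus y)=[x\cup(x\cup A^{u})^{u}]=[x^{u}\cup A^{u}]$, which is a fixed point of $x\oplus(-)$, giving the identity; the $\tau_2 x$, $\tau_1 y$ case is handled the same way using $\bigcup_{z\in x}z$. For hra1 one simply notes $1\odot x=[S\cap x]=[x]$ and $x\oplus 0=[x\cup\emptyset]=[x]$, both of type $\tau_2$.

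The step I expect to be the main obstacle is bm: $(\tau_1 x,\ \tau_2 y,\ x\oplus y=y\ \longrightarrow\ \blacklozenge x\oplus y=y)$. Writing $y=[A]$, the hypothesis says $[x\cup A^{u}]=[A]$, hence $(x\cup A^{u})^{l}=A^{l}$; but $(x\cup A^{u})^{l}\supseteq(A^{u})^{l}=A^{u}\supseteq A^{l}$, so the antecedent actually forces $A^{l}=A^{u}$, i.e. $y$ is a \emph{definite} object, and one may take $A=A^{l}=A^{u}$ with $[x\cup A]=[A]$, so that $x^{u}\subseteq(x\cup A)^{u}=A^{u}=A$. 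Then $\blacklozenge x\oplus y=[x^{u}\cup A]=[A]=y$. So the delicate point is recognising that the premise of bm collapses $y$ onto a definite object; once that is observed, the conclusion is immediate. I would present the finished argument as a case table, citing the representation theorem for pre-rough algebras for the $\tau_2$ instances and leaving the purely Boolean $\tau_1$ instances to the reader.
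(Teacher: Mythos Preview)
The paper states this theorem without proof, so there is nothing to compare against at the level of argument. Your proposal is correct and is exactly the kind of routine verification the author evidently intended the reader to carry out: split every identity by the types of its free variables, invoke the Boolean algebra $\wp(S)$ for the pure $\tau_1$ cases, the pre-rough algebra $\wp(S)|\approx$ for the pure $\tau_2$ cases, and handle the genuinely mixed items (u1, bm, hra1) via the description $\bigcup_{z\in[A]}z=A^{u}$, $\bigcap_{z\in[A]}z=A^{l}$. Your treatment of bm --- observing that the hypothesis $[x\cup A^{u}]=[A]$ forces $A^{l}=A^{u}$, hence $x^{u}\subseteq A$ --- is the only place where anything non-mechanical happens, and your argument there is sound. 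One small addition you should make explicit for the $\odot$-half of u1 is the dual identity $(P\cap Q)^{l}=P^{l}\cap Q^{l}$, which you use implicitly; otherwise the write-up is complete.
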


\begin{definition}
An \emph{abstract enriched pre-rough partial algebra} (AERA) will be a partial algebra of the form \[S\,=\,\left\langle\underline{S},\,\neg,\,\sim,\,\oplus,\,\odot,\,\mm,\,\ml,\,0,\,1,\,\bot,\,\top  \right\rangle\] (of type 
$(1,\,1,\,2,\,2,\,1,\,1,\,0,\,0,\,0,\,0)$) that satisfies:
\begin{description}
\item [RA]{$\mathrm{dom}(\neg)$ along with the operations $(\oplus,\,\odot,\,\mm,\,\ml,\,\sim,\,0,\,1)$ restricted to it and the definable $\Rightarrow$ forms a pre-rough algebra,}
\item [BA]{$\underline{S}\setminus \mathrm{dom}(\neg)$ with the operations $(\oplus,\,\odot,\,\mm,\,\ml,\,\sim,\,\top,\,\bot)$ restricted to it forms a topological boolean algebra (with an interior and closure operator),}
\item [IN]{Given the definitions of type-1, type-2, all of u1, u2, ter(ij), bi(i), bm and hra hold for any $i, \, j$.}
\end{description}
Note that AERAs are actually defined by a set of quasi equations.
\end{definition}

\begin{theorem}
Every AERA $S$ has an associated approximation space $X$ (up to isomorphism), such that the derived CERA $\mathfrak{W}(X)$ is isomorphic to it. 
\end{theorem}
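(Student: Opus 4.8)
The plan is to establish a representation theorem for AERAs by mimicking the representation theory for pre-rough algebras, enriched to account for the classical Boolean "half" of the structure. The key observation is that the axiom $\mathbf{RA}$ hands us a pre-rough algebra on $\mathrm{dom}(\neg)$, which by the representation theory quoted in the preceding Proposition is (up to isomorphism) of the form $\wp(X)|\!\approx$ for some approximation space $X\,=\,\langle\underline{X},R\rangle$; similarly $\mathbf{BA}$ gives a topological Boolean algebra on $\underline{S}\setminus\mathrm{dom}(\neg)$, which I want to identify with $\wp(X)$ carrying the interior/closure operators $^l,^u$ induced by the very same $R$. So the first step is to fix the approximation space $X$ extracted from the pre-rough part, and then argue that the topological Boolean part is forced to be $\wp(X)$ with the $R$-induced operators — this is where the cross-sort axioms ($\mathbf{IN}$: the instances \texttt{bm}, \texttt{hra1}, and the \texttt{ter(ij)} laws linking the two sorts) do the work of pinning down how the two algebras sit together.

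Second, I would construct an explicit map $\varphi : \underline{S} \longrightarrow \wp(X)\cup\wp(X)|\!\approx$ by sending $\mathrm{dom}(\neg)$ to $\wp(X)|\!\approx$ via the pre-rough representation and $\underline{S}\setminus\mathrm{dom}(\neg)$ to $\wp(X)$ via the topological-Boolean representation, and then check that $\varphi$ is a homomorphism of partial algebras onto $\mathfrak{W}(X)$: that it respects $\ml,\mm,\sim$ on each sort (immediate from the two separate representations), and — the genuinely new content — that it respects the mixed-sort cases of $\oplus$ and $\odot$, i.e. that the "lift a set to its $\approx$-class via $\bigcup$" behaviour built into the CERA operations is exactly what the AERA axioms \texttt{bi(i)}, \texttt{bm}, \texttt{hra1}, \texttt{qov-1}, and the distributivity laws \texttt{ter(i2)} force. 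Here one uses that in $\wp(X)$ the interior operator composed with the quotient map recovers $\ml$, and that $1\odot x$ and $x\oplus 0$ (per \texttt{hra1}) land in the $\tau_2$ sort precisely mirroring the passage $\wp(X)\to\wp(X)|\!\approx$.

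Third, I would verify injectivity of $\varphi$: on each sort it follows from injectivity of the respective representation maps, and across sorts $\varphi$ cannot collapse a $\tau_1$ element with a $\tau_2$ element because the axioms type-1 and type-2 (the first two displayed conditions of the CERA theorem, which by $\mathbf{IN}$ also hold in the AERA) make the predicates $\tau_1,\tau_2$ definable and mutually exclusive, hence preserved by any homomorphism. Surjectivity onto $\mathfrak{W}(X)$ is built into the construction. Finally, uniqueness of $X$ up to isomorphism follows from the uniqueness clause in the pre-rough representation theorem, since $R$ is recoverable from $\wp(X)|\!\approx$ (the singleton granules, equivalently the atoms of $L(E)$, determine the equivalence classes).

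The main obstacle I anticipate is the cross-sort coherence in the second step: one must check that the abstract mixed operations $\oplus,\odot$ (and the two implications) are compatible with the concrete recipe in the CERA definition — in particular that the \emph{choice} of approximation space $X$ coming from the pre-rough part is the \emph{same} one whose power set realizes the Boolean part, rather than merely \emph{an} isomorphic copy. Getting the two representations to be simultaneously realizable over a single $\underline{X}$ is exactly what axioms \texttt{bm} and \texttt{hra1} are designed to guarantee, and making that argument airtight — tracing how a Boolean-sort element $x$ and its image $\ml x$, $1\odot x$ in the pre-rough sort correspond under a common $R$ — is the crux; the remaining verifications are routine diagram-chasing through the equational axioms.
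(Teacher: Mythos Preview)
Your proposal is correct and follows essentially the same route as the paper: isolate the two sorts, invoke the separate representation theorems for the pre-rough and topological Boolean parts, and then use the passage $x \mapsto x \oplus 0$ (equivalently $1 \odot x$, i.e.\ axiom \texttt{hra1}) to force cross-sort coherence of $\oplus, \odot$. The only stylistic difference is that the paper argues the last step by contradiction (a mixed-sort disagreement would push forward to a disagreement inside the pre-rough part, contradicting its representation), whereas you build the isomorphism $\varphi$ directly; the underlying mechanism is identical, and your identification of \texttt{bm}/\texttt{hra1} as the axioms carrying the cross-sort weight is exactly right.
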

\begin{proof}
Given $S$, the topological boolean algebra and the pre-rough algebra part can be isolated as the types can be determined with $\neg,\,\oplus$ and the $0$-place operations. The representation theorems for the parts can be found in \cite{rh} and \cite{bc1} respectively.

Suppose $\mathfrak{W}(Y)$ is a CERA formed from the approximation space $Y$ (say) determined by the two parts. If $Y$ is not isomorphic to $X$ as a relational structure, then it is possible to derive an easy contradiction to the representation theorem of the parts.

Suppose $\mathfrak{W}(X)$ is not isomorphic to $S$, then given the isomorphisms between respective parts, at least one instance of $x\oplus ' y\,\neq\, x\oplus y$ or $x\odot ' y\,\neq\,x\odot y$ (for a type-1 $x$ and a type-2 $y$ with $'$ denoting the interpretation in $\mathfrak{W}(X)$). But as type-1 elements can be mapped into type-2 elements (using $0$ and $\oplus$), this will result in a contradiction to the representation theorem of parts. 

\end{proof}

\subsection{Dialectical Rough Logic}

A natural dialectical interpretation can be assigned to the proposed semantics. A subset of the original approximation space has a dual interpretation in the classical and rough semantic domain. While an object in the latter relates to a set of objects in the classical semantic domain, it is not possible to transform objects in the rough domain to the former. For this reason, the universe is taken to be the set of tuples having the form $\{(x,\,0 \oplus x)\,:\,\tau_{1}x\}\,\cup\,\{(b,\,x)\,:\,\tau_{2} b \,\&\,\tau_{1} x \,\&\,x \oplus 0 = b\}\,=\,K$ ($x$ and $b$ being elements of a CERA). This universe is simply the described dialectical relation between objects in the two domains mentioned above. Other dialectical relations can also be derived from the specified one.

\begin{definition} A \emph{concrete rough dialectical algebra} (CRAD) will be a partial algebra on $K$ along with the operations $+,\,\cdot,\,\ml^{*},\,\neg,\,\thicksim$ and $0$-place operations $(\top,\,1),\,(1,\,\top),\,(0,\,\bot),\,(\bot,\,0)$ defined by (EUD is an abbreviation for \textsf{Else Undefined})
\[(a,b)\,+\,(c,e) = \left\{ \begin{array}{ll}
 (a\oplus c,\,b\oplus e) & \text{ if }\tau_{i}a,\,\tau_{i}c \text{ if defined} \\
 (a\oplus c,\,e\oplus a )  & \text{ if }\tau_{1} a,\,\tau_{2} c ,  (e\oplus a)\oplus 0\,=\,a \oplus c , \text{ EUD}\\
 (a\oplus e,\,c\oplus b )  & \text{ if }\tau_{2} a,\,\tau_{1} c ,  (c\oplus b)\oplus 0\,=\,a \oplus e , \text{ EUD}
 \end{array} \right.\]
\[(a,b)\,\cdot\,(c,e) = \left\{ \begin{array}{ll}
 (a\odot c,\,b\odot e) & \text{ if }\tau_{i}a,\,\tau_{i}c \text{ if defined}\\
 (a\odot c,\,e\odot a )  & \text{ if }\tau_{1} a,\,\tau_{2} c ,  (e\odot a)\odot 0\,=\,a \odot c ,\text{ EUD}\\
 (a\odot e,\,c\odot b )  & \text{ if }\tau_{2} a,\,\tau_{1} c ,  (c\odot b)\odot 0\,=\,a \odot e , \text{ EUD}
 \end{array} \right.\]

$\ml^{*}(a,\,b)\,=\,(\ml a,\,\ml b)$ if defined and  $\thicksim (a,\,b)\,=\,(\sim a,\,\sim b)$ if defined.
\end{definition}

\subsection*{Illustrative Example}
 
The following example is intended to illustrate key aspects of the theory invented in this section. 
 
Let $S= \{a, b, c, e, f, q\} $ and $R$ be the least equivalence relation generated by 
\[\{(a, b ),\,(b, c),\,(e, f)\}. \]
Under the conditions, the partition corresponding to the equivalence is 
\[\mathcal{G} = \{ \{a, b, c \},\,\{e, f \},\,\{q \}  \}.\]
The quotient $S|R$ is the same as $\mathcal{G}$. In this example strings having the form $ef$ are used as an abbreviation for $\{e, f\}$.

The set of triples having the form $(x, x^l, x^u)$ for any $x\in \wp(S)$ is as below:
\begin{itemize}
\item {$(a, \emptyset,abc)$,  $(b, \emptyset, abc)$,  $(c, \emptyset, abc)$,  $(e,\emptyset, ef)$,} 
\item {$(f, \emptyset, ef)$, $(q, q, q)$, $(ab, \emptyset,abc)$,   $(ac, \emptyset, abc)$,} 
\item {$(ae, \emptyset,abcef)$,  $(af, \emptyset, abcef)$,  $(aq, q, abcq)$, $(bc, \emptyset, abc)$,}
\item {$(be, \emptyset, abcef)$, $(bf, \emptyset, abcef)$, $(bq, q, abcq)$, $(ec, \emptyset, abcef)$,}
\item {$(cf, \emptyset, abcef)$, $(ef,ef, ef)$,  $(eq, q, efq)$,  $(fq, q, efq)$, $(abc, abc, abc)$,}
\item {$(abe,\emptyset, abcef)$, $(abf,\emptyset, abcef)$, $(abq, q, abcq)$, $(bce,\emptyset,abcef)$,}
\item {$(bcf,\emptyset,abcef)$, $(bcq,q,abcq)$, $(ace,\emptyset,abcef)$, $(acf,\emptyset,abcef)$,}
\item {$(acq,q,abcq)$, $(aef,ef,abcef)$, $(bef,ef,abcef)$, $(cef,ef,abcef)$,} 
\item {$(aeq,q,S)$, $(afq,q,S)$, $(beq,q,S)$, $(bfq,q,S)$, $(ceq,q,S)$,}
\item {$(cfq,q,S)$, $(efq,efq,efq)$, $(abce,abc,abcef)$, $(abcf,abc,abcef)$,}
\item {$(abcq,abcq,abcq)$, $(abef,ef,abcef)$, $(abeq,q,S)$, $(abfq,q,S)$,} 
\item {$(bcef,ef,abcef)$, $(bceq,q,S)$, $(bcfq,q,S)$, $(aceq,q, S)$, $(acfq,q, S)$,}
\item {$(acef,ef,abcef)$, $(aefq,efq,S)$, $(befq, efq,S)$, $(cefq, efq,S)$,} 
\item {$(abcef,abcef,abcef)$, $(abceq,abcq,S)$, $(abcfq,abcq,S)$, $(acefq,efq,S)$,}
\item {$(bcefq,efq, S)$, $(S, S, S)$}
\end{itemize}

 From the values, it can be checked that the sets of roughly equivalent objects are
 \begin{itemize}
\item {$\{a,b, c,ab,ac,bc\}$, $\{e, f\}$, $\{q\}$ - the reader should note that elements belonging to the set are themselves sets.}
\item {$\{ae,af,be, bf,ce,cf, abe, ace, acf,abf,bce,bcf \}$}
\item {$\{abq,acq,bcq,aq,bq,cq\}$, $\{abce,abcf\}$, $\{aef, bef, cef,abef, acef,bcef\}$}
\item {$\{eq, fq\}$, $\{abc\}$, $\{abcef\}$, $\{ef\}$, $\{abcq\}$, $\{efq\}$, $S$}
\item {$\{aeq,beq,ceq,afq,bfq,cfq,abeq,aceq,bceq,abfq,bcfq,acfq\}$}
\item {$\{aefq, befq, cefq, abefq, bcefq,acefq\} $ and $\{abceq,abcfq\}$.}
\end{itemize}

The domain is taken to be $\wp(S) \cup \wp(S)|R$ in case of a CERA and interpretations of the unary operations are obvious. The nontrivial binary operations get interpreted as below :
\[ bc\oplus[bf] = \left[ bc \cup abcef \right] = [abcef]\]
\[b\oplus [f] =  [bef] = \{aef, bef, cef,abef, acef,bcef\}\]
\[ bc \odot [bf] = 
 \left[ bc\,\cap\,\bigcap \{ae,af,be, bf,ce,cf, abe, ace, acf,abf,bce,bcf \}\right] = [\emptyset]\]
\[b\odot [f] = [b\cap \bigcap \{e, f\}] = [\emptyset] \] 
\[abcq \odot [q] = [q] \] 
\[ bc \rightsquigarrow [bf] = \left[\bigcup_{z\,\in\,[bf]} (bc\,\cup\, z^{c})\right] = S\]
\[bc \rightsquigarrow [abceq] = \{abcef\} \]
\[bc \rightsquigarrow [S] = \{a, b, c, ab, bc, ac \}\]
\[[bf] \twoheadrightarrow bc =  \left[\bigcup_{z\,\in\,[bf]} (z\,\cup\, bc^{c})\right] = [S] \]

The universe of the CRAD associated with a CERA $S$ is formed as the set of pairs having the form $(x,0\oplus x)$ and   $(x\oplus 0, x)$ under the restriction that $\tau_1 x$ holds. So in the present example, some elements belonging to the universe are $(a, \{a,b, c,ab,ac,bc\})$, $(\{a,b, c,ab,ac,bc\}, bc)$, $(fq,\{eq, fq\})$.

\[(a,b)\,+\,(c,e) = \left\{ \begin{array}{ll}
 (a\oplus c,\,b\oplus e) & \text{ if }\tau_{i}a,\,\tau_{i}c\,\ \\
 (a\oplus c,\,e\oplus a )  & \text{ if }\tau_{1} a,\,\tau_{2} c ,  (e\oplus a)\oplus 0\,=\,a \oplus c , \text{ EUD}\\
 (a\oplus e,\,c\oplus b )  & \text{ if }\tau_{2} a,\,\tau_{1} c ,  (c\oplus b)\oplus 0\,=\,a \oplus e , \text{ EUD}
 \end{array} \right.\]

 To compute $(a, \{a,b, c,ab,ac,bc\}) + (\{eq, fq\}, fq)$ it is necessary to compute 
 \begin{itemize}
\item {$a \oplus \{eq,fq\}$ = $\{aefq, befq, cefq, abefq, bcefq,acefq\} $}
\item {$ a \oplus fq$ = $afq$}
\item {$afq +0 = \{aeq,beq,ceq,afq,bfq,cfq,abeq,aceq,bceq,abfq,bcfq,acfq\}$ }
\item {Clearly, $a \oplus \{eq,fq\} \neq afq +0$.  }
\item {So $(a, \{a,b, c,ab,ac,bc\}) + (\{eq, fq\}, fq)$ is not defined.}
\end{itemize}

Also note that $(a, \{a,b, c,ab,ac,bc\}) + (b, \{a,b, c,ab,ac,bc\})$ is defined, but $(a, \{a,b, c,ab,ac,bc\}) + (bc, \{a,b, c,ab,ac,bc\})$ is not.
 
\paragraph{Dialectical Negations in Practice}
Real examples can be constructed (from the above example) by assigning meaning to the elements of $S$. An outline is provided below. 
\begin{itemize}
\item {Suppose $\{a, b, c, e, f, q\}$ is a set of attributes of lawn tennis players.}
\item {For the above sentence to fit into the example context, it is necessary that they can be freely collectivised. This means that no combination of attributes should be explicitly forbidden.}
\item {While sets of the form $ec$ refer to players with specific attributes, roughly equal objects like $\{aef, bef, cef,abef, acef,bcef\}$ can be read as new class labels. Members of a class can be referred to in multiple ways. }
\item {The operations defined permit aggregation, commonality and implications in novel ways. The $\oplus$ operation in particular can generate new classes that contain classes of roughly equal players and players with specific attributes in a mereological sense. For example, it can answer questions of the form: \emph{What features can be expected of those who have a great backhand and have roughly equal performance on hard courts?} }
\item {A number of dialectical negations can be defined in the situation. For example,
\begin{itemize}
\item {An object $x$ is in a sense dialectically opposed to a roughly equivalent set of objects $H$.}
\item {An object $x$ is in a sense dialectically opposed to $x \oplus H$.}
\item {Likewise other operations defined provide more examples of opposition.}
\end{itemize}
}
\end{itemize}

More generally, similar dialectical negation predicates can be defined over CRADs and new kinds of logical rules can be specified that concern transformation of one instance of dialectical negation into another, restricted introduction and inference rules. Related logic will appear separately.

\subsection{Parthoods}

In \textsf{CERA} related contexts, the universe is taken to be $W=\underline{\wp (S)\cup \wp(S)|\approx}$ and the most natural parthoods are ones defined from the aggregation and commonality operators. Parthoods can also be based on information content and ideas of \emph{consistent comparison}.

\begin{definition}
The following parthoods\label{cerpart} can be defined in the mixed semantic domain corresponding to \textsf{CERA} on $W$ 
\begin{align}
\pc_o a b \; \leftrightarrow \;  [a] \leq [b] \tag{Roughly Consistent}\\
\pc_\oplus a b \; \leftrightarrow \; a \oplus b = b  \tag{Additive}\\
\pc_\odot a b \;  \leftrightarrow \; a \odot b = a \tag{Common} 
\end{align}
\end{definition}

$\leq$ is the lattice order used in the definition of pre-rough algebras. Note that the operations $\oplus , \odot$ are not really required in the definitions of the last two parthoods which can equivalently be defined using the associated cases. This is significant as one of the goals is to \emph{count the objects in specialised ways to arrive at semantics that make sense} \cite{am240}.

In the definition of the base set $K$ of \textsf{CRAD}, $K$ is already a dialectic relation. Still definitions of parthoods over it make sufficient sense.

\begin{definition}
The relation $\pc_\aleph$, defined as below, will be called the \emph{natural parthood} relation  on  $K$:
\[\pc_\aleph a b \leftrightarrow [e_1 a]\leq [e_1 b ] \,\&\, [e_2 a ] \leq [e_2 b] ,\]
where the operation $e_i$ gives the $i$th component for $i=1,2$.
\end{definition}

Admittedly the above definition is not internal to $K$ as it refers to things that do not exist within $K$ at the object level of reasoning.

\section{General Parthood}

Parthood can be defined in various ways in the framework of rough sets in general and granular operator spaces in particular. The rough inclusion defined earlier in the background section is a common example of parthood. Some others have been introduced in Def.\ref{cerpart} and in the illustrative example. The following are more direct possibilities that refer to a single non classical semantic domain (the parthoods of CERA are in the classical domain):

\begin{align}
 \tag{Very Cautious} \pc ab \longleftrightarrow a^l \subseteq b^l \\
 \tag{Cautious} \pc ab \longleftrightarrow a^l \subseteq b^u \\
 \tag{Lateral} \pc ab \longleftrightarrow a^l \subseteq b^u\setminus b^l \\
 \tag{Possibilist} \pc ab \longleftrightarrow a^u \subseteq b^u \\
 \tag{Ultra Cautious} \pc ab \longleftrightarrow a^u \subseteq b^l \\
 \tag{Lateral+} \pc ab \longleftrightarrow a^u \subseteq b^u\setminus b^l \\
 \tag{Bilateral} \pc ab \longleftrightarrow a^u\setminus a^l \subseteq b^u \setminus b^l \\
 \tag{Lateral++} \pc ab \longleftrightarrow a^u\setminus a^l \subseteq b^l \\
 \tag{G-Simple} \pc ab \longleftrightarrow (\forall g\in \mathcal{G})(g\subseteq a \longrightarrow g\subseteq b)
 \end{align}

All these are valid concepts of parthoods that make sense in contexts as per availability and nature of information. \textsf{Very cautious} parthood makes sense in contexts with high cost of misclassification or  
the association between properties and objects is confounded by the lack of clarity in the possible set of properties. \textsf{G-Simple} is a version that refers granules alone and avoids references to approximations. 

The above mentioned list of parthoods can be more easily found in decision making contexts in practice. 

\begin{example}
Consider, for example, the nature of diagnosis and treatment of patients in a hospital in war torn Aleppo in the year 2016. The situation was characterised by shortage of medical personnel, damaged infrastructure, large number of patients and possibility of additional damage to infrastructure. Suppose patient \textsf{B} has bone fractures and a bullet embedded in their arm and patient \textsf{C} has bone fractures and shoulder dislocation due to a concrete slab in free fall, that only one doctor is on duty, and suppose that either of the two patients can be treated properly due to resource constraints. Suppose also that the doctor in question has access to some precise and unclear diagnostic information on medical conditions and that all of this data is not in tabular form.

In the situation, decision making can be based on available information and principles like
\begin{itemize}
\item {Allocate resources to the patient who is definitely in the worst state - this decision strategy can be corresponded to \emph{very cautious} parthoods,  }
\item {Allocate resources to the patient who seems to be in the worst state - this decision strategy can be corresponded to \emph{cautious} parthoods,}
\item {Allocate resources to the patient who is possibly in the worst state - this decision strategy can be corresponded to \emph{possibilist} parthoods.}
\item {Allocate resources to the patient who is likely to show more than the default amount of improvement - this decision strategy can be corresponded to the \emph{bilateral} parthoods.}
\item {If every symptom or unit complication that is experienced or certainly likely to be experienced by patient $1$ is also experienced or is certainly likely to be experienced by patient $2$, then prefer treating patient $2$ over patient $1$ - this decision strategy can be corresponded to the \emph{g-simple} parthoods with symptoms/unit complications as granules.}
\end{itemize} 
\end{example}

Parthood can be associated with both dialectic and dialethic statements in a number of ways. Cautious parthood is consistent with instances having the form $\pc ab$ and $\pc ba$. It is by itself a dialectic relation within the same domain of discourse. Dialectics between parthoods in different semantic domains are of greater interest and will be considered in subsequent sections.

The \emph{apparent parthood} relations considered in later sections of this paper typically arise from lack of clarity in specification of properties or due to imprecision (of fuzziness). This is illustrated in the next example.

\begin{example}[Cold vs Influenza]

Detection of influenza within $48$ hours of \emph{catching it} is necessary for effective treatment with anti-virals, but often patients fail to understand subtleties in distinguishing between cold and influenza. It is also not possible to administer comprehensive medical tests in a timely cost-effective way even in the best of facilities. So ground breaking insights even in restricted contexts can be useful.

The two medical conditions have similar symptoms. These may include \emph{fever} - as indicated by elevated temperatures, \emph{feverishness} - as indicated by personal experience (this may not be accompanied by fever), \emph{sneezing, running nose, blocked nose, headache of varying intensity,cough and body pain}. Body pain is usually a lot more intense in case of flu (but develops after a couple of days).

Clearly, in the absence of confirmatory tests patients can believe both \emph{instances of cold is apparently part of flu} and \emph{instances of flu is apparently part of cold}. These statements are in dialectical contradiction to each other but no dialetheias are involved. Cold and flu are also in dialectical contradiction to each other. This is a useful and relevant formalism.

It is another matter that if glutty negations are permitted then \emph{apparently part of} can as well be replaced by \emph{part of}.  
\end{example}

\section{Figures of Dialectical Opposition}

The scope of counting strategies and nature of possible models can be substantially improved when additional dialectical information about the nature of order-theoretic relation between rough and crisp objects is used. In the literature on generalizations of the square of opposition to rough sets, as in \cite{cd9,dc15}, it is generally assumed that realizations of such relations is the end result of semantic computations. This need not necessarily be so for reasons that will be explained below. In classical rough sets, a subset $X$ of objects $O$ results in a tri-partition of $O$ into the regions $L(X)$ (corresponding to lower approximation of $X$), $B(X)$ (the boundary region) and $E(X)$ (complement of the upper approximation). These form a hexagon of opposition (see Fig.\ref{paw}). In more general rough sets, this diagram generalises to cubes of opposition \cite{cd9}.

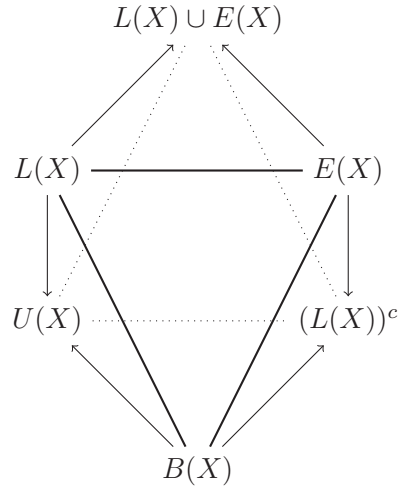
\begin{figure}[hbt]
\begin{center}
\begin{tikzpicture}[node distance=2cm, auto]
\node (A) {$L(X)\cup E(X)$};
\node (B) [below of=A] {};
\node (C) [left of=B] {$L(X)$};
\node (E) [right of=B] {$E(X)$};
\node (F) [below of=C] {$U(X)$};
\node (G) [below of=E] {$(L(X))^c$};
\node (O) [below of=B] {};
\node (H) [below of=O] {$B(X)$};
\draw[->,font=\scriptsize] (C) to node {}(A);
\draw[->,font=\scriptsize] (E) to node {}(A);
\draw[->,font=\scriptsize] (C) to node {}(F);
\draw[->,font=\scriptsize] (E) to node {}(G);
\draw[->,font=\scriptsize] (H) to node {}(F);
\draw[->,font=\scriptsize] (H) to node {}(G);
\draw[-,font=\scriptsize,dotted] (F) to node {}(A);
\draw[-,font=\scriptsize,dotted] (F) to node {}(G);
\draw[-,font=\scriptsize,dotted] (G) to node {}(A);
\draw[-,font=\scriptsize,thick] (C) to node {}(E);
\draw[-,font=\scriptsize,thick] (C) to node {}(H);
\draw[-,font=\scriptsize,thick] (E) to node {}(H);
\end{tikzpicture}
\caption{Hexagon of Opposition}
\label{paw}
\end{center}
\end{figure}

The general strategy used in a forthcoming paper is illustrated in \textsf{Fig.}\ref{stra}.

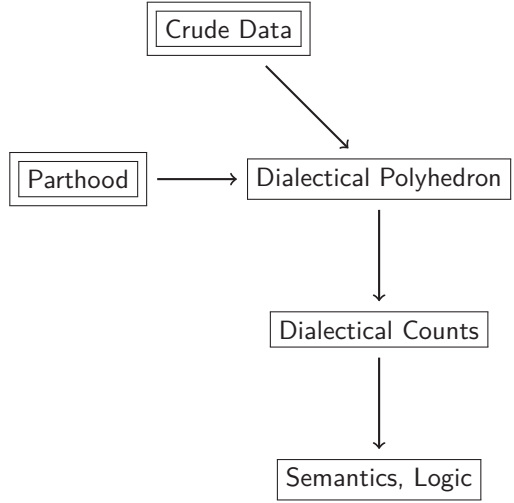
\begin{figure}[hbt]
\begin{center}
\begin{tikzpicture}[node distance=2cm, auto]
\node (A) {\fbox{\small{\fbox{\small{\textsf{Crude Data}}}}}};
\node (B) [below of=A] {};
\node (C) [left of=B]{\fbox{\small{\fbox{\small{\textsf{Parthood}}}}}};
\node (E) [right of=B] {\fbox{\small{\textsf{Dialectical Polyhedron}}}};
\node (F) [below of=E] {\fbox{\small{\textsf{Dialectical Counts}}}};
\node (K) [below of=F] {\fbox{\small{\textsf{Semantics, Logic}}}};
\draw[->,font=\scriptsize,thick] (A) to node {}(E);
\draw[->,font=\scriptsize,thick] (C) to node {}(E);
\draw[->,font=\scriptsize,thick] (E) to node {}(F);
\draw[->,font=\scriptsize,thick] (F) to node {}(K);
\end{tikzpicture}
\caption{Dialectical Semantics by Counting}
\label{stra}
\end{center}
\end{figure}

Some idea of parthood related ordering in the form of the following relations (a deeper understanding of ontology is essential for making sense of the vague usage (this is explored in \cite{am3690})) can suffice in application contexts of any dialectical generalised scheme of the square or hexagon of opposition (examples have already been provided earlier):

\begin{description}
\item[AP]{\textsf{Is Apparently Part of}: understood from class, property, expected behavior, or some other perspective.}
\item[APN]{\textsf{Is Apparently not Part of}. }
\item[AP0]{\textsf{Is Apparently Neither Part of Nor Not Part of}.}
\item[CP]{\textsf{Is Certainly Part of}.}
\item[CPN]{\textsf{Is Certainly Not Part of}.}
\item[CP0]{\textsf{Is Certainly Neither Part of Nor Not Part of}. (This is intended to convey uncertainty)}
\item[AI]{\textsf{Is Apparently Indistinguishable from}. }
\item[CI]{\textsf{Is Certainly Indistinguishable from}.}
\item[AW]{\textsf{Is Apparently a Whole of}}
\item[AWN]{\textsf{Is Apparently Not a Whole of}. }
\item[AW0]{\textsf{Is Apparently Neither a Whole of Nor Not a Whole of}.}
\item[CW]{\textsf{Is Certainly a Whole of}.}
\item[CWN]{\textsf{Is Certainly not a Whole of}.}
\item[CW0]{\textsf{Is Certainly Neither a Whole of Nor Not a Whole of}. }
\end{description}

By the word \textsf{apparently}, the agent may be referring to the lack of models, properties possessed by the objects, relativised views of the same among other possibilities. For example, the word \emph{apparently} can refer to the absence of any clear models about connections between diseases in data from a hospital chain in a single city or it can refer to problems caused by lack of data or relativizations about expected state of affairs relative to pre-existing models. Fuzzy and degree valuations of these perceptions are even less justified due to the use of approximate judgments. Predicates like \textsf{AP0, CP0} are needed for handling indecision (which is likely to be happen often in practice). 

As pointed out by a reviewer, \textsf{AP, APN} and \textsf{AP0} form three-fourths of Belnap's \emph{useful 4-valued logic} \cite{bn1977}. In this regard it should be noted that only those dialectical oppositions that can be sustained by inference procedures for progression of knowledge are relevant for logic.

The above set of predicates can be split into the following subsets of interest:
\begin{itemize}
\item{\textsf{Pure Apparence}: AP, APN, AP0, AW, AWN, AW0} 
\item{\textsf{Pure Certainty}: CP, CPN, CP0, CW, CWN, CW0}
\item{\textsf{Mixed Apparence}: AP, APN, AP0, AW, AWN, AW0, AI} 
\item{\textsf{Mixed Certainty}: CP, CPN, CP0, CW, CWN, CW0, CI}
\item{\textsf{Pure}: AP, APN, AP0, AW, AWN, AW0, CP, CPN, CP0, CW, CWN, CW0}
\item{\textsf{Mixed}: Union of all the above}
\end{itemize}

Before proceeding further it is necessary to fix the philosophical concepts of \emph{Contradiction, Contrariety, Sub-contrariety and Sub-alternation } because the literature on these concepts is vast and there is much scope for varying their meaning (see for example \cite{fs2012,ip}).  One way of looking at the connection between truth value assignments and sentences, necessary for the diagram to qualify in the square of opposition (generalised) paradigm, is illustrated in \textsf{Table.1, 2, 3, 4} below (NP means the assignment is not possible) :

\begin{table}[hbt]
\parbox{.45\linewidth}{
\centering
\begin{tabular}{|c|c|c|}
\hline
$\;\alpha \;$ & $\;\beta\;$ & $\mathbf{CY}(\alpha, \beta)\,$\\
\hline
T & T & NP\\
\hline
T & F & T \\
\hline
F & T & T \\ 
\hline
F & F & T \\
\hline
\end{tabular}
\caption{Contrariety}
}
\hfill
\parbox{.45\linewidth}{
\centering
\begin{tabular}{|c|c|c|}
\hline
$\; \alpha \;$ & \; $\beta\;$ & $\mathbf{\complement}(\alpha, \beta)\,$\\
\hline
T & T & NP \\
\hline
T & F & T \\
\hline
F & T & T \\
\hline
F & F & NP \\
\hline
\end{tabular}
\caption{Contradiction}
}
\end{table}

\begin{table}[hbt]
\parbox{.45\linewidth}{
\centering
\begin{tabular}{|c|c|c|}
\hline
$\;\alpha \;$ & $\;\beta\;$ & $\mathbf{SCY}(\alpha, \beta)\,$\\
\hline
T & T & T \\
\hline
T & F & T \\
\hline
F & T & T \\ 
\hline
F & F & NP \\
\hline
\end{tabular}
\caption{Sub-Contrariety}
}
\hfill
\parbox{.45\linewidth}{
\centering
\begin{tabular}{|c|c|c|}
\hline
$\; \alpha \;$ & \; $\beta\;$ & $\mathbf{AN}(\alpha, \beta)\,$\\
\hline
T & T & T \\
\hline
T & F & NP \\
\hline
F & T & T \\
\hline
F & F & T \\
\hline
\end{tabular}
\caption{SubAlternation}
}
\end{table}

The $P^Q$ semantics \cite{mo2} tries to take a simplified view of the situation. It may appear that the main problem with the proposal is a lack of suitable logical operators. But this drawback is not likely to be that significant for the counting based approach introduced in this paper and developed further in a forthcoming paper. The $P^Q$ approach in question is to look for answers to the questions:
\begin{itemize}
\item {\textsf{TT}: Can the sentences be true together?}
\item {\textsf{FT}: Can the sentences be false together?}
\end{itemize}

After finding those answers, categories can be worked out according to \textsf{Table.5}.

\begin{table}[hbt]
\centering
\begin{tabular}{|c|c|c|}
\hline
\diagbox[width=5em]{\textsf{TT}}{\textsf{FT}}&\thead{$\;\mathbf{1}\;$}&\thead{$\;\mathbf{0}\;$}\\
\hline
$\;\mathbf{1}\;$ & \textsf{Sub-alternation} & \textsf{Sub-Contrariety} \\
\hline
$\;\mathbf{0}\;$ & \textsf{Contrariety} & \textsf{Contradiction} \\
\hline
\end{tabular}
\caption{An Opposition}
\end{table}

But dialectical contradiction requires additional categories that relate to the following questions:
\begin{itemize}
\item {\textsf{Dialethia}: Can any one of the two statements be both true and false together? (Let $\delta(A)$ be the statement that $A$ is both false and true together).}
\item {\textsf{Bi-Dialectic}: Is either statement in dialectical opposition to the other? (Let $\beth (A, B)$ be the statement that $A$ is dialectically opposed to $B$).}
\item {\textsf{Dialectic}: Is either statement a statement expressing dialectical opposition? (Let $\beta (A)$ be the statement that $A$ expresses dialectical opposition with $\beta$ being a particular associated predicate). }
\end{itemize}

\emph{The above realization of the concept of dialetheia is pretty clear for implementation, but the latter two forms of dialectic depend on the choice of predicates and so many interpretations would be possible. In a typical concrete case, the parthood(s), the dialectical predicate and figure of opposition should be defined in order to obtain concrete answers. }

The Question-Answer Semantic approach (QAS) of \cite{fs2012} constitutes a relatively more complete strategy in which the \emph{sense} of a sentence $\alpha$ is an ordered set of questions $Q(\alpha) = \left\langle q_1(\alpha), \ldots , q_r(\alpha) \right\rangle$ and its \emph{reference} is an ordered set of answers 
$A(\alpha) = \left\langle a_1(\alpha), \ldots , a_r(\alpha) \right\rangle$. These answers can be coerced to binary form (with possible responses being \textsf{Yes} or \textsf{No}). 

If the dialectical approach of the present paper is extended to QAS approach, then the number of possible questions (like can question A and B be true together?) becomes very large and suitable subsets that are as efficient as the whole would be of interest. The possibilities are indicated in \textsf{Table.6}.

\begin{table}[hbt]
\centering
\begin{tabular}{l|rlrlrlrlrlrl}
\toprule

\textsf{ A} & T & F &$\beth$ &$\delta$ & $\delta$ & $\beta$ & $\beta$ &$\beta$& $\beta$& $\delta $ & $\delta $ & $\delta $ \\
\midrule
\textsf{ B} &T & F &$\beth$ & $\delta$ & $\beth$ &$\beta$ & $\beth$ & T & F& T & F & $\beta$\\

\bottomrule
\end{tabular}
\caption{Potential Combinations}
\end{table}

Connectives and operations can be involved in the definition of these predicates, but in the general case the meta concept of \emph{suitable subsets} can only be roughly estimated and not defined unambiguously. Even apparent-parthood related statements can be handled by the answer set after fixing the necessary subsets of the question set. In a separate paper, it is shown by the present author that counting procedures can be initiated with the help of this strategy.

Relative to the nature of truth values, two approaches to the problem can be adopted:
\begin{enumerate}
 \item {Keep the concept of truth and falsity fixed and attempt suitable definitions of oppositions or}
 \item {Permit variation of truth and falsity values. This in general would amount to deviating further from classical opposition paradigms.}
\end{enumerate}

\subsection{Classical Case-1: Fixed Truth }

When the concept of truth is not allowed to vary beyond the set $\{T, F\}$, then it is apparently possible to handle the cases involving apparent parthood without special external rules. A natural question that arises in the context of certain parthoods is about the admissibility of truth values. These aspects are considered in this subsection. 

One instructive (but not exhaustive) way is to read $\text{CP} ab $ as \textsf{all $a$ with property $\pi (a)$ and none of properties in $\neg\pi(a)$ (in the domain of discourse) are part of any $b$ with property $\pi(b)$}. As a consequence $\text{CPN} ab $ is \textsf{all $a$ with property $\pi (a)$ and none of properties in $\neg\pi(a)$ (in the domain of discourse) are not part of any $b$ with property $\pi(b)$}.

In rough sets, the association of objects with properties happen only when mechanisms of associations are explicitly specified or are specifiable. 
There is much freedom to choose from among different mechanisms of associations in a abstract perspective. 
In praxis, these choices become limited but rarely do they ever become absent. 
Implicit in all this is the assumption of stable choice among possible mechanisms of associations.
The stability aspect is an important research direction.

If truth tables for determining the nature of opposition is attempted using ideas of state resolution (instead of connectives) then perplexing results may happen. The choice of connectives is in turn hindered by an excess of choice. So the minimalist perspective based on two questions or the \textsf{QAS}-type approach should be preferred.

The following two theorems require interpretation.
\begin{theorem}
 The truth tables corresponding to two of the pairs 
 
 formed from \textsf{AP, APN, AP0} have the form indicated in \textsf{Table.7, 8}($P*Q$ is the resolution of the state relating to  $P$ and $Q$. This is abbreviated in the tables by $*$  ):
 
\begin{table}[hbt]
\parbox{.45\linewidth}{
\centering
\begin{tabular}{|c|c|c|}
\hline
$\;\text{AP} pq \;$ & $\;\text{APN}pq\;$ & $ * $\\
\hline
T & T & IN\\
\hline
T & F & T \\
\hline
F & T & T \\ 
\hline
F & F & IN \\
\hline
\end{tabular}
\caption{Contradiction?}
}
\hfill
\parbox{.45\linewidth}{
\centering
\begin{tabular}{|c|c|c|}
\hline
$\;\text{AP} pq \;$ & $\;\text{AP0}pq\;$ & $ * $\\
\hline
T & T & IN\\
\hline
T & F & T \\
\hline
F & T & T \\
\hline
F & F & IN \\
\hline
\end{tabular}
\caption{Contradiction?}
}
\end{table}
 
(\textsf{IN} is an abbreviation for indeterminate.) 
\end{theorem}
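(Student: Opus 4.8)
The statement is semantic rather than purely formal, so the plan has two stages: first fix a precise reading of \textsf{AP}, \textsf{APN}, \textsf{AP0} and of the resolution operation $*$, and then verify the two tables by a finite case split over the four rows. For the reading I would use the Belnap-style picture already hinted at in the paper: to the atomic statement ``$p$ is part of $q$'' the agent's apparent information assigns a value in $\{\textsf{t}, \textsf{f}, \textsf{b}, \textsf{n}\}$, and $\text{AP}\,pq$ says the assigned value carries the token $\textsf{t}$, $\text{APN}\,pq$ says it carries $\textsf{f}$, and $\text{AP0}\,pq$ says it is the gap $\textsf{n}$ (carries neither). The operation $*$ I would define \emph{not} as a connective but as a resolution verdict: $P * Q = \textsf{T}$ when the conjunction of the two pieces of apparent information determines a classical value for the atom, and $P * Q = \textsf{IN}$ (indeterminate) when it does not --- the crucial point being that $\textsf{IN}$ here replaces the \textsf{NP} of genuine contradiction, because a glut or a gap is epistemically admissible in this setting even when it fails to resolve.

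With this fixed, the four rows of the pair $\{\textsf{AP}, \textsf{APN}\}$ are immediate. The row $(\textsf{T},\textsf{F})$ forces the value to be $\textsf{t}$, which is classical, so $* = \textsf{T}$; symmetrically $(\textsf{F},\textsf{T})$ forces $\textsf{f}$ and again $* = \textsf{T}$. The row $(\textsf{T},\textsf{T})$ forces the glut $\textsf{b}$, which is not classical, so $* = \textsf{IN}$; and $(\textsf{F},\textsf{F})$ forces the gap $\textsf{n}$, again non-classical, so $* = \textsf{IN}$. This is exactly Table 7.

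For $\{\textsf{AP}, \textsf{AP0}\}$ the same bookkeeping applies once one reads the joint occurrence of ``carries $\textsf{t}$'' and ``is the gap'' as a degenerate (over-constrained) apparent state which, lacking any classical witness, also resolves to $\textsf{IN}$ rather than to \textsf{NP}; the off-diagonal rows again single out $\textsf{t}$ respectively $\textsf{n}$ and so give $* = \textsf{T}$, while $(\textsf{F},\textsf{F})$ --- neither ``carries $\textsf{t}$'' nor ``is the gap'' --- leaves the state under-specified, hence $\textsf{IN}$. This is Table 8. The remaining pair $\{\textsf{APN}, \textsf{AP0}\}$ is the one the theorem does not list, and indeed under the same reading its $(\textsf{T},\textsf{T})$ entry behaves differently (``carries $\textsf{f}$'' together with ``is the gap'' need not obstruct resolution in the same way), which is why only the two \textsf{AP}-pairs exhibit the form of Tables 7 and 8.

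I expect the genuine obstacle to be the legitimacy of the definition of $*$: ``state resolution (instead of connectives)'' is deliberately under-determined --- the text itself warns that perplexing results may happen --- so the proof must argue that the chosen reading of the three predicates and of resolution is the natural one, and in particular that the doubly-false rows deserve $\textsf{IN}$ on grounds of under-specification while the doubly-true rows deserve $\textsf{IN}$ on grounds of over-specification, uniformly. Once that interpretive commitment is made explicit (which is precisely why the author prefaces the result with the remark that the following theorems \emph{require interpretation}), the tables follow by the inspection sketched above.
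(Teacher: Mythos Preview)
The paper's own proof is one line --- ``direct'' --- followed by the remark that the interpretation is open; it does not commit to any particular semantics for $*$ and treats the tables as primitive data about the \textsf{AP}-predicates. Your Belnap-style reconstruction is considerably more ambitious, and for Table~7 it succeeds: with $\text{AP}$ meaning ``carries $\textsf{t}$'', $\text{APN}$ meaning ``carries $\textsf{f}$'', and $*=\textsf{T}$ iff a classical value is forced, the four rows come out exactly as stated.

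The gap is in your treatment of Table~8. Under your own definitions, the case analysis does not reproduce the table. Row $(\textsf{T},\textsf{F})$ says the value carries $\textsf{t}$ and is not the gap, hence lies in $\{\textsf{t},\textsf{b}\}$ --- this does \emph{not} single out $\textsf{t}$ as you claim. Row $(\textsf{F},\textsf{T})$ forces the value to be the gap $\textsf{n}$, which is determined but non-classical, so by your definition of $*$ the verdict should be $\textsf{IN}$, not $\textsf{T}$. Worst of all, row $(\textsf{F},\textsf{F})$ says the value neither carries $\textsf{t}$ nor is the gap, hence is uniquely $\textsf{f}$ --- fully specified and classical --- so your own resolution rule gives $\textsf{T}$, not the $\textsf{IN}$ you need. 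In short, your Belnap semantics yields $(\textsf{IN},?,\textsf{IN},\textsf{T})$ down the $*$-column of Table~8 rather than the required $(\textsf{IN},\textsf{T},\textsf{T},\textsf{IN})$.

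The honest conclusion is that a single uniform four-valued reading of $*$ cannot recover both tables simultaneously; this is precisely why the paper hedges with ``the interpretation is open'' and captions both tables with a question mark. Your closing paragraph already anticipates that the legitimacy of $*$ is the real obstacle --- that instinct is right, but the concrete reading you chose does not clear it for Table~8, and the case-by-case claims there need to be withdrawn or the semantics of $*$ redefined (non-uniformly) before the argument goes through.
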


\begin{proof}
 The proof is direct. However, the interpretation is open and it is possible to read both tables as corresponding to contradiction.
 
\end{proof}

\begin{theorem}
 The truth tables corresponding to the pairs formed from \textsf{CP, CPN, CP0} and the pair \textsf{CP, CI} have the form indicated in 
 
 \textsf{Table.9,10,11,12} (\textsf{NP} is an abbreviation for not possible):
 
\begin{table}[hbt]
\parbox{.45\linewidth}{
\centering
\begin{tabular}{|c|c|c|}
\hline
$\;\text{CP} pq \;$ & $\;\text{CPN}pq\;$ & $* $\\
\hline
T & T & NP\\
\hline
T & F & T \\
\hline
F & T & T \\ 
\hline
F & F & NP \\
\hline
\end{tabular}
\caption{Contradiction}
}
\hfill
\parbox{.45\linewidth}{
\centering
%\begin{tabular}{|c|c|c|}
\begin{tabular}{|c|c|c|}
\hline
$\;\text{CP} pq \;$ & $\;\text{CP0}pq\;$ & $*$\\
\hline
T & T & NP\\
\hline
T & F & T\\
\hline
F & T & T \\
\hline
F & F & T \\
\hline
\end{tabular}
\caption{Contrariety}}
\end{table}

\begin{table}[hbt]
\parbox{.45\linewidth}{
\centering
\begin{tabular}{|c|c|c|}
\hline
$\;\text{CPN} pq \;$ & $\;\text{CP0}pq\;$ & *\\
\hline
T & T & NP\\
\hline
T & F & T \\
\hline
F & T & T \\ 
\hline
F & F & NP \\
\hline
\end{tabular}
\caption{Contradiction}
}
\hfill
\parbox{.45\linewidth}{
\centering
\begin{tabular}{|c|c|c|}
\hline
$\;\text{CI} pq \;$ & $\;\text{CP}pq\;$ & $*$\\
\hline
T & T & T\\
\hline
T & F & NP\\
\hline
F & T & T \\
\hline
F & F & T \\
\hline
\end{tabular}
\caption{Sub-alternation}
}
\end{table}
\end{theorem}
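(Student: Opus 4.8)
The plan is to run a finite case analysis grounded in the ``instructive reading'' introduced just before the theorem. First I would fix a semantics for the four predicates in play: read $\textsf{CP}\,pq$ as ``every $p$-like object (those carrying property $\pi(p)$ and none of $\neg\pi(p)$) is part of every $q$-like object carrying $\pi(q)$''; read $\textsf{CPN}\,pq$ as the corresponding universal non-parthood statement; read $\textsf{CP0}\,pq$ as the positive certainty-of-indecision claim (certainly neither part nor not part); and read $\textsf{CI}\,pq$ as certain indistinguishability. With these fixed, the theorem reduces to deciding, for each of the four pairs $(\textsf{CP},\textsf{CPN})$, $(\textsf{CP},\textsf{CP0})$, $(\textsf{CPN},\textsf{CP0})$, $(\textsf{CI},\textsf{CP})$, and for each of the four assignments in $\{\textsf{T},\textsf{F}\}^2$, whether the assignment is jointly realizable (else the cell is $\textsf{NP}$) and, when it is, what value the state-resolution $*$ returns; finally one compares the resulting four-row columns with the schemata of Tables 1--4 to read off which figure (contradiction, contrariety, sub-alternation) applies.

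Next I would record the incompatibilities among the predicates that drive the $\textsf{NP}$ entries, since everything else follows mechanically. The ``uncontroversial'' facts under the chosen reading are: $\textsf{CP}\,pq$ and $\textsf{CPN}\,pq$ cannot hold together (a $p$ cannot be certainly part of and certainly not part of the same $q$); $\textsf{CP0}\,pq$, being a certainty claim about the \emph{absence} of a determinate answer, is incompatible with each of $\textsf{CP}\,pq$ and $\textsf{CPN}\,pq$; and $\textsf{CI}\,pq$ entails $\textsf{CP}\,pq$ in the relevant direction, so the assignment $(\textsf{T},\textsf{F})$ for $(\textsf{CI},\textsf{CP})$ is $\textsf{NP}$. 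Feeding these into the row-by-row check gives: for $(\textsf{CP},\textsf{CPN})$ the pattern $\textsf{NP},\textsf{T},\textsf{T},\textsf{NP}$ (Table 9), matching the contradiction schema of Table 2; for $(\textsf{CP},\textsf{CP0})$ the pattern $\textsf{NP},\textsf{T},\textsf{T},\textsf{T}$ (Table 10), matching the contrariety schema of Table 1; for $(\textsf{CPN},\textsf{CP0})$ again the contradiction pattern (Table 11); and for $(\textsf{CI},\textsf{CP})$ the pattern $\textsf{T},\textsf{NP},\textsf{T},\textsf{T}$ (Table 12), matching the sub-alternation schema of Table 4. In each surviving cell the two statements are jointly realizable and $*$ returns the consistent value $\textsf{T}$; unlike the apparent-parthood case of the previous theorem, no $\textsf{IN}$ cell arises once the certainty predicates are read this way.

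The hard part will not be the arithmetic of the tables --- that is genuinely direct --- but justifying the ``both false'' rows, and more generally \emph{which} rows are declared $\textsf{NP}$. The basic incompatibilities above already force every $\textsf{NP}$ cell except the $(\textsf{F},\textsf{F})$ cells of the two contradiction tables (Tables 9 and 11); declaring those $\textsf{NP}$ rests on an additional, non-automatic postulate, namely that under the fixed $\{\textsf{T},\textsf{F}\}$ regime the pair in question admits no third, undecided state --- with a more permissive reading one gets $\textsf{T}$ there and hence contrariety or sub-contrariety rather than contradiction. So, exactly as with the preceding theorem, I would present the computation as direct but append a remark that the identification of a pair with a particular figure of opposition is contingent on the chosen reading of the certainty qualifiers, and indicate which alternative readings perturb which cells.
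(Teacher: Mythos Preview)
Your approach is essentially the paper's: a direct case-by-case check of the four truth-value assignments for each pair, with the resulting pattern matched against the contradiction/contrariety/sub-alternation schemata. The paper's own proof is in fact much terser than yours---it says only that one checks the possibilities by cases and singles out the $(\textsf{F},\textsf{F})$ row of Table~10 for comment---so your added discussion of which incompatibilities drive the $\textsf{NP}$ cells, and your caveat that the $(\textsf{F},\textsf{F})$ rows of Tables~9 and~11 depend on the reading of the certainty qualifiers, goes beyond what the paper supplies but is entirely in its spirit (the paper itself prefaces these theorems by saying they ``require interpretation'').
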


\begin{proof}
The proof consists in checking the possibilities by cases. 
In the table for \textsf{CP} and \textsf{CP0}, the last line is justified because no possibilities are covered by the last column. 
\end{proof}

From the safer (and questionable) two question framework, the above two theorems have the following form:

\begin{theorem}
The answers to the two simultaneity (\textsf{Sim}) questions for the pairs formed from \textsf{AP, APN, AP0} are in \textsf{Table.13,14}.
 
\begin{table}[hbt]
\parbox{.45\linewidth}{
\centering
\begin{tabular}{|c|c|c|}
\hline
$\;\text{AP} ab \;$ & $\;\text{APN}ab\;$ & \textsf{Sim}\\
\hline
T & T & NP\\
\hline
F & F & NP \\
\hline
\end{tabular}
\caption{Contradiction}
}
\hfill
\parbox{.45\linewidth}{
\centering
\begin{tabular}{|c|c|c|}
\hline
$\;\text{AP} ab \;$ & $\;\text{AP0}ab\;$ & \textsf{Sim} \\
\hline
T & T & T\\
\hline
F & F & NP \\
\hline
\end{tabular}
\caption{Sub-Contrariety}
}
\end{table}
 
(\textsf{NP} is an abbreviation for not possible.) 
\end{theorem}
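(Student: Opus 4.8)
The plan is to prove the statement by a direct enumeration of joint truth-value assignments, in the spirit of the two preceding theorems, but now reading each pair through the two-question (\textsf{TT}/\textsf{FT}) lens of \textsf{Table.5} rather than through state resolution. First I would fix a reading of the three apparent predicates: following the remark that \textsf{AP, APN, AP0} constitute three-fourths of Belnap's four-valued logic, I take $\text{AP}\,ab$, $\text{APN}\,ab$ and $\text{AP0}\,ab$ to assert that the apparent evaluation of ``$a$ is part of $b$'' is, respectively, true-only, false-only, or a gap, with gluts excluded because we are in the fixed-truth case; the stability-of-association assumption stated just before the theorem is invoked so that this apparent evaluation is well defined for the pair under consideration.

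For the pair $(\text{AP}\,ab,\text{APN}\,ab)$ I would argue in the background where the agent commits to an apparent part/not-part verdict, so that the gap value is not in play: then \textsf{AP} and \textsf{APN} are mutually exclusive and jointly exhaustive, hence classical negations of one another, which forces \textsf{TT}$=$\textsf{No} (the row $(T,T)$ is \textsf{NP}) and \textsf{FT}$=$\textsf{No} (the row $(F,F)$ is \textsf{NP}); by \textsf{Table.5} this is exactly the \emph{Contradiction} cell, giving \textsf{Table.13}. For the pair $(\text{AP}\,ab,\text{AP0}\,ab)$ the gap value is precisely what is being compared against, so \textsf{AP} and \textsf{AP0} can hold together (the relativised views or partial models underlying ``apparently'' may simultaneously support parthood and register indecision), while they cannot both fail, since $\neg\text{AP}\wedge\neg\text{AP0}$ would force the verdict \textsf{APN}, which the $(F,F)$ configuration is not permitted to encode here; hence \textsf{TT}$=$\textsf{Yes}, \textsf{FT}$=$\textsf{No}, which is the \emph{Sub-Contrariety} cell of \textsf{Table.5}, giving \textsf{Table.14}. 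An alternative, more mechanical route is to read \textsf{Table.13} and \textsf{Table.14} directly off from \textsf{Table.7} (and the analogous table for \textsf{AP}/\textsf{AP0}) by retaining only the $(T,T)$ and $(F,F)$ rows and coercing the ``indeterminate'' entry to ``not possible'' under the two-question convention.

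The main obstacle is not the enumeration itself but the justification of the two $(F,F)$ entries: whether \textsf{AP} and \textsf{APN} (respectively \textsf{AP} and \textsf{AP0}) can both be false depends delicately on whether the gap predicate \textsf{AP0} is regarded as part of the ambient set of admissible apparent states for that pair, and on the stability assumption, since different choices land in different cells of \textsf{Table.5}. I would therefore make the ambient set of apparent states explicit for each pair (excluding \textsf{AP0} in the first, including it in the second) and flag, as the paper already does for the corresponding state-resolution theorems, that the interpretation is open and that other defensible readings exist. Modulo that choice, the theorem reduces to the four-line case check above together with a lookup in \textsf{Table.5}.
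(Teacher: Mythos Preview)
Your primary argument---a direct case analysis with explicit interpretive choices about which apparent states are in play for each pair---is essentially the paper's approach, only spelled out far more carefully than the paper's one-line proof (``In the table for \textsf{AP}, \textsf{AP0}, TT discludes all possibilities and therefore yields T. Other parts are not hard to prove.''). Your flagging of the $(F,F)$ entries as interpretation-dependent is an honest improvement over the paper, which hides exactly the same issue behind ``not hard to prove''; the paper even calls the two-question framework ``questionable'' just before stating the theorem.

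Your \emph{alternative route}, however, does not work as stated. You propose to read both tables off Tables~7 and~8 by uniformly coercing ``indeterminate'' to ``not possible''. But Table~8 gives $(T,T)\to\textsf{IN}$ for the pair \textsf{AP}/\textsf{AP0}, so a uniform coercion $\textsf{IN}\mapsto\textsf{NP}$ would yield $(T,T)\to\textsf{NP}$ and hence \emph{Contradiction}, whereas Table~14 claims $(T,T)\to T$ and hence \emph{Sub-Contrariety}. The single sentence the paper offers as proof is aimed precisely at this anomalous entry: under the ``apparently'' modality the parthood verdict and the indecision verdict may coexist, so here \textsf{IN} resolves to $T$ rather than to \textsf{NP}. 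The coercion is therefore non-uniform across the four \textsf{IN} cells of Tables~7--8, and your alternative route must carve out the $(T,T)$ cell of \textsf{AP}/\textsf{AP0} separately---at which point it collapses back into your primary argument. Drop the alternative route, or rephrase it as ``retain only the $(T,T)$ and $(F,F)$ rows and then argue each \textsf{IN} entry individually'', and you are aligned with the paper.
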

\begin{proof}
In the table for \textsf{AP, AP0}, TT discludes all possibilities  and therefore yields T. Other parts are not hard to prove.

\end{proof}

\begin{theorem}
The simultaneity data corresponding to the pairs

formed from \textsf{CP, CPN, CP0} are in \textsf{Table.15,16,17,18}.
 
\begin{table}[hbt]
\parbox{.45\linewidth}{
\centering
\begin{tabular}{|c|c|c|}
\hline
$\;\text{CP} ab \;$ & $\;\text{CPN}ab\;$ & \textsf{Sim} \\
\hline
T & T & NP\\
\hline
F & F & NP \\
\hline
\end{tabular}
\caption{Contradiction}
}
\hfill
\parbox{.45\linewidth}{
\centering
\begin{tabular}{|c|c|c|}
\hline
$\;\text{CP} ab \;$ & $\;\text{CP0}ab\;$ & \textsf{Sim}\\
\hline
T & T & NP\\
\hline
F & F & NP \\
\hline
\end{tabular}
\caption{Contradiction}
}
\end{table}

\begin{table}[hbt]
\parbox{.45\linewidth}{
\centering
\begin{tabular}{|c|c|c|}
\hline
$\;\text{CPN} ab \;$ & $\;\text{CP0}ab\;$ & \textsf{Sim}\\
\hline
T & T & NP\\
\hline
F & F & NP \\
\hline
\end{tabular}
\caption{Contradiction}
}
\hfill
\parbox{.45\linewidth}{
\centering
\begin{tabular}{|c|c|c|}
\hline
$\;\text{CI} ab \;$ & $\;\text{CP}ab\;$ & \textsf{Sim}\\
\hline
T & T & T\\
\hline
F & F & T \\
\hline
\end{tabular}
\caption{Sub-alternation}
}
\end{table}

\end{theorem}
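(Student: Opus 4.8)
The plan is to settle each of the four pairs by a direct case analysis, using the interpretations of the certainty predicates fixed earlier in this subsection — reading $\textsf{CP}\,ab$ as the statement that every $a$ carrying $\pi(a)$ and none of $\neg\pi(a)$ (in the domain of discourse) is part of every $b$ carrying $\pi(b)$, $\textsf{CPN}\,ab$ as the corresponding ``is not part of'' assertion, $\textsf{CP0}\,ab$ as the positive assertion of indeterminacy of this verdict, and $\textsf{CI}\,ab$ as certain indistinguishability — and then reading the figure of opposition off Table 5, so that a ``no/no'' answer to the two simultaneity questions names \emph{Contradiction} and a ``yes/yes'' answer names \emph{Sub-alternation}. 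I would record, for each pair, only the two entries needed (can both be true; can both be false), since the remaining classical configurations are forced once these are known.

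For the pair $\textsf{CP},\textsf{CPN}$: on a determinate pair $ab$ the two readings are mutually exclusive and jointly exhaustive — $a$ is either certainly part of $b$ or certainly not — so neither can both hold nor can both fail, giving $\textsf{NP}$ for both simultaneity questions, which is the Contradiction row. The pairs $\textsf{CP},\textsf{CP0}$ and $\textsf{CPN},\textsf{CP0}$ would be treated the same way once one adopts the stipulation that a determinate verdict ($\textsf{CP}$ or $\textsf{CPN}$) and the indeterminacy assertion $\textsf{CP0}$ over the same pair can neither be asserted together nor denied together; each then yields the $\textsf{NP}/\textsf{NP}$ pattern again. For $\textsf{CI},\textsf{CP}$ the content is the entailment $\textsf{CI}\,ab \longrightarrow \textsf{CP}\,ab$ (objects certainly indistinguishable are certainly part of one another): this makes the two statements simultaneously assertable (take genuinely indistinguishable $a,b$) and simultaneously deniable (take $a$ certainly not part of $b$), while ``$\textsf{CI}\,ab$ true and $\textsf{CP}\,ab$ false'' is excluded — the ``yes/yes'' signature that Table 5 labels Sub-alternation.

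The enumeration itself is routine; the real work is fixing the semantic stipulations that force each $\textsf{NP}$ entry, since — as already noted for the companion theorem on $\textsf{AP},\textsf{APN},\textsf{AP0}$ — the interpretation is open. The delicate entries are the ``both false'' answers for the pairs involving $\textsf{CP0}$: under a plain trichotomy reading (exactly one of $\textsf{CP},\textsf{CPN},\textsf{CP0}$ holds) one would instead answer ``yes'' there, collapsing Contradiction to Contrariety — which is precisely the verdict reached for $\textsf{CP},\textsf{CP0}$ by the earlier state-resolution theorem (Table 10). So the main obstacle is to make explicit the extra stipulation on $\textsf{CP0}$ that the two-question framework tacitly uses, and to reconcile this with the fuller truth tables; once that stipulation is stated, Tables 15--18 follow immediately by inspection.
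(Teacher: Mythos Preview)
Your approach --- a direct case analysis on each pair, then reading the opposition type off the two-question table --- is exactly what the paper does; indeed the paper's entire proof is a single sentence illustrating one entry (``can both $\textsf{CP}ab$ and $\textsf{CP0}ab$ be false? As the situation is impossible, NP is the result''). Your version is considerably more detailed, and in particular you correctly isolate the genuinely delicate point that the paper glosses over: under a plain trichotomy reading the ``both false'' entry for the $\textsf{CP0}$ pairs would be possible (yielding Contrariety, as in Table~10) rather than NP, so the two-question framework is tacitly using a stronger stipulation on $\textsf{CP0}$ than the earlier state-resolution theorem --- the paper itself flags the two-question framework as ``questionable'' but does not spell out this discrepancy.
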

\begin{proof}
 In Table 16, for example, the question is can both $\text{CPab}$ be false and $\text{CP0ab}$ be false? As the situation is impossible, NP is the result. 
 
\end{proof}

\subsection{Case-2: Pseudo Gluts}

A minimalist use of assumptions on possible grades of truth in the cases admitting apparent parthood leads to the following diagram of truth values. The figure is biased against falsity because in the face of contradiction agents are expected to be truth seeking - this admittedly is a potentially contestable philosophical statement.

\begin{figure}[hbt]
\begin{center}
\begin{tikzpicture}[node distance=2cm, auto]
\node (A) {\fbox{${\mathbf T^*}$}};
\node (B) [below of=A] {\fbox{${\mathbf T_*}$}};
\node (C) [below of=B] {\fbox{${\mathbf T}$}};
\node (E) [right of=C] {};
\node (F) [left of=C] {};
\node (G) [below of=F] {\fbox{${\mathbf F^{\ominus}}$}};
\node (H) [below of=G] {\fbox{${\mathbf F_{\ominus}}$}};
\node (J) [below of=E] {\fbox{${\mathbf T^{\ominus}}$}};
\node (K) [below of=J] {\fbox{${\mathbf T_{\ominus}}$}};
\node (L) [left of=K] {};
\node (N) [below of=L] {\fbox{${\mathbf F}$}};
\draw[->,font=\scriptsize,thick] (A) to node {}(B);
\draw[->,font=\scriptsize,thick] (B) to node {}(C);
\draw[->,font=\scriptsize,thick] (C) to node {}(G);
\draw[->,font=\scriptsize,thick] (C) to node {}(J);
\draw[->,font=\scriptsize,thick] (G) to node {}(H);
\draw[->,font=\scriptsize,thick] (J) to node {}(K);
\draw[->,font=\scriptsize,thick] (K) to node {}(N);
\draw[->,font=\scriptsize,thick] (H) to node {}(N);
\end{tikzpicture}
\caption{Weak and Strong Truths}\label{glut}
\end{center}
\end{figure}
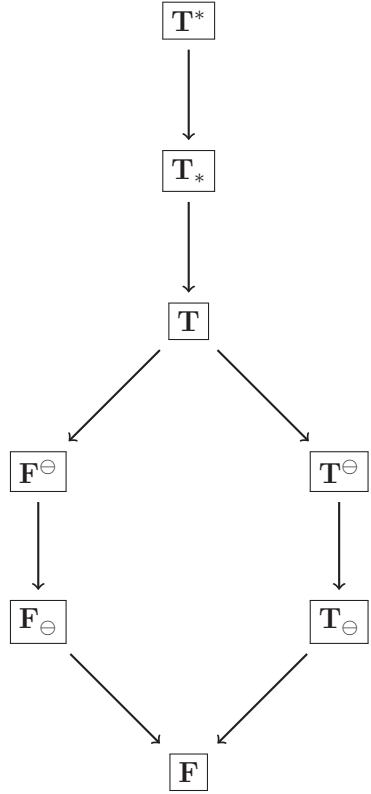

Reading of truth tables in relation to state transition based conjunction is also relevant for dialectical interpretation. But these are not handled by the above tables and will be part of future work.

For the dialectical counting procedures introduced in the next section, the basic contexts are assumed to be very minimalist and possibly naive. In these some meta principles on aggregation of truth can be useful or natural. The states of truth mentioned in \textsf{Fig.}\ref{glut} relate to the following meta method of handling apparent truth. These will be referred as \emph{Truth State Determining Rules} (\textsf{TSR}). In the rules $\alpha, \beta$ are intended in particular for formulas having the form $\pc ab$ and variants.

\begin{flushleft}
\textsf{Truth State Determining Rules} 
\end{flushleft}

\begin{itemize}
\item {\textsf{If $\alpha$ is apparently true and $\beta$ supports it, then $\alpha $ becomes more true.}}
\item {\textsf{If $\alpha$ is apparently true and $\beta$ opposes it, then $\alpha $ becomes less true.}}
\item {\textsf{If $\alpha$ is less true (than true is supposed to be) and $\beta$ opposes it, then $\alpha $ becomes even less true.}}
\item {\textsf{If $\alpha$ is apparently false and $\beta$ opposes it, then $\alpha $ becomes less false.}}
\item {\textsf{In the figure, $\text{T}$ denotes an intermediate truth value that can become stronger $\text{T}_*, \, \text{T}^*$ or weaker $\text{T}_\ominus , \text{T}^\ominus$.} This is because operators (apparently) like \emph{less} and \emph{even less} are available.}
\end{itemize}

The above list of rules can be made precise using the distance between vertices in the graph and thus it would be possible to obtain truth values associated with combinations of sentences involving apparent parthood alone.

\subsection{Counting Procedures and Dialectical Opposition}

A brief introduction to the dialectical counting procedures and semantics invented by the present author is included in this subsection. The full version will appear in a separate paper. 

New concepts of rough natural number systems were introduced in \cite{am240} from both formal and less-formal perspectives. These are useful for improving most rough set-theoretical measures in general rough sets and in the representation of rough semantics. In particular it was proved that the algebraic semantics of classical \textsf{RST} can be obtained from the developed dialectical counting procedures. In the counting contexts of \cite{am240}, a pair of integers under contextual rules suffices to indicate the \emph{number} associated with the element in an instance of the counting scheme under consideration. This is because those processes basically use a \emph{square of discernibility} with the statements at vertices having the following form:

\begin{itemize}
\item {$\text{ IS.NOT}(a,b)$ meaning $a$ is not $b$. }
\item {$\text{ IS}(a,b)$ meaning $a$ is identical with $b$.}
\item {$\text{ IND}(a,b)$ meaning $a$ is indiscernible from $b$.}
\item {$\text{ DIS}(a,b)$ meaning $a$ is discernible from $b$.}
\end{itemize}

\begin{figure}[hbt]
\begin{center}
\begin{tikzpicture}[node distance=4cm, auto]
\node (A) {\fbox{$\text{IS.NOT}(a,b)$}};
\node (B) [below of=A] {\fbox{$\text{IND}(a,b)$}};
\node (C) [right of=B] {\fbox{$\text{IS}(a,b)$}};
\node (E) [right of=A] {\fbox{$\text{DIS}(a,b)$}};
\draw[->,font=\scriptsize,thick] (E) to node {}(A);
\draw[->,font=\scriptsize,thick] (B) to node {}(C);
\draw[-,font=\scriptsize] (A) to node {}(B);
\draw[-,font=\scriptsize] (C) to node {}(E);
\draw[-,font=\scriptsize,dashed] (B) to node {$\beth$}(E);
\end{tikzpicture}
\caption{Rough Counting}\label{rc}
\end{center}
\end{figure}
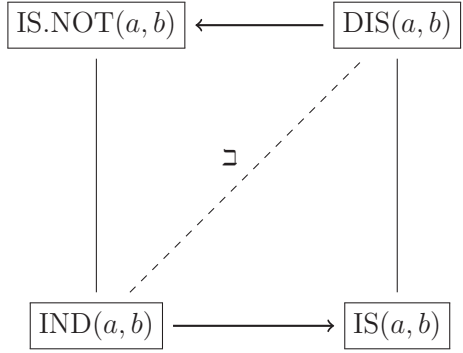

A particular example of such a counting method is the \emph{indiscernible predecessor based primitive counting} (IPC). The adjective \emph{primitive} is intended to express a minimal use of granularity and related axioms. Let $S = \{x_{1},\,x_{2},\,\ldots,\,x_{k},\,\ldots ,\,\}$ be an at most countable set of elements
in ZF that correspond to some collection of objects.  If $R$ is a binary relation on $S$ that corresponds to \emph{is weakly indiscernible from}, then IPC takes the following form:  
 
\paragraph{Indiscernible Predecessor Based Primitive Counting (IPC)}

In this form of 'counting', the relation with the immediate preceding step of counting 
matters in a crucial way.

\begin{enumerate}
\item {Assign $f(x_{1})\, = \,1_{1} =s^{0}(1_{1})$.}
\item {If $f(x_{i})=s^{r}(1_{j})$ and $(x_{i},x_{i+1}) \in  R$, then assign
$f(x_{i+1})=1_{j+1}$.}
\item {If $f(x_{i})=s^{r}(1_{j})$ and $(x_{i},x_{i+1}) \notin  R$, then assign
$f(x_{i+1})=s^{r+1}(1_{j})$.}
\end{enumerate}

For example, if $S=\{f, b, c, a, k, i, n, h, e, l, g, m \}$ and $R$ is the reflexive and transitive
closure of the relation \[\{(a,b),\,(b, c),\,(e, f),\, (i, k), (l, m),\, (m,
n),\,(g, h) \}\] then $S$ can be counted in the presented order as below.
\[\tag{IPC}{\{1_{1}, 2_{1}, 1_{2},1_{3}, 2_{3}, 1_{4}, 2_{4}, 3_{4}, 1_{5}, 2_{5},1_{6}, 2_{6} \},} \]

Rough counting methods like the above have nice algebraic models and can also be used for representing semantics.
The dialectical approaches of the present paper motivate generalizations based on the following:
\begin{itemize}
\item {Counting by Dialectical Mereology: This method of counting is intended to be based on the principle that the mereological relation of the object being counted with its predecessors  should determine its count and the enumeration should be on convex regular polygons (including squares and hexagons of opposition), polyhedrons or generalizations thereof (polytopes) of dialectical and classical opposition. }
\item {Counting by Threes: This is based on the principle that the relation between  the object being counted with its predecessor and the mereological relation of the object with its successor should determine its count. The approach admits of many variations.}
\item {Counting by Reduction to Discernibility: It is also possible to count taking increasing scopes of discernibility  into account.}
\end{itemize}

\section*{Further Directions and Remarks}

In this research paper all the following have been accomplished by the present author (apart from contributing to the  solution of some inverse problem contexts)

\begin{itemize}
\item {Formalization of possible concepts of dialectical contradiction has been done in one possible way using object level predicates.}
\item {The difference between dialetheias and dialectical contradiction has been clarified. It has been argued that dialectical contradiction need not be reducible to dialetheias or be associated with glutty negation. But the latter correspond to Hegelian dialectical contradiction.}
\item {A pair of dialectical rough semantics have been developed over classical rough sets. The nature of parthood is reexamined and used for counting based approaches to semantics.}
\item {Opposition in the context of rough set and parthood related sentences is investigated and concepts of dialectical opposition and opposition are generated. Related truth tables show that the classical figures do not work as well for parthood related sentences. This extends previous work on figures of opposition of rough sets in new directions. A major contribution is in the use of dialectical negation predicates in the generation of figures of opposition.}
\item {Possible diagrams of opposition are used for defining generalised counting process for constructive algebraic semantics. This builds on earlier work of the present author in \cite{am240}.}
\item {The antichain based semantics for general rough sets that has been developed in \cite{am6999,am9114} has been supplemented with a constructive dialectical counting process and scope for using generalised negations in a separate paper. The foundations for the same has been laid in this paper.}
\end{itemize}

Sub-classes of problems that have been motivated by the present paper relate to 

\begin{itemize}
\item {Formal characterization of conditions that would permit reduction of dialectical contradiction to dialetheia.}
\item {Construction of algebraic semantics from dialectical counting using figures of opposition.}
\item {Construction of general rough set algorithms from the antichain based dialectics.}
\item {Algebraic characterization of parthood based semantics as opposed to rough object based approaches. This problem has been substantially solved by the present author in \cite{am9114}.}
\item {Methods of property extraction by formal interpretation or translation across semantics - this aspect has not been discussed in detail in the present paper and will appear separately.}
\item {Development of logics relating to glutty negation in the context of suitable rough contexts.}
\end{itemize}

\begin{flushleft}
\textbf{Acknowledgement}:
\end{flushleft}
\begin{small}
The present author would like to thank the referees for detailed remarks that led to improvement (especially of the readability) of the research paper. 
\end{small}

\bibliographystyle{apacite}
\bibliography{algroughffe.bib}

\begin{thebibliography}{}

\bibitem [\protect \citeauthoryear {%
Apostol%
}{%
Apostol%
}{%
{\protect \APACyear {1979}}%
}]{%
al}
\APACinsertmetastar {%
al}%
\begin{APACrefauthors}%
Apostol, L.%
\end{APACrefauthors}%
\unskip\
\newblock
\APACrefYear{1979}.
\newblock
\APACrefbtitle {{Logique et Dialectique}} {{Logique et Dialectique}}.
\newblock
\APACaddressPublisher{}{Gent}.
\PrintBackRefs{\CurrentBib}

\bibitem [\protect \citeauthoryear {%
Avron%
\ \BBA {} Konikowska%
}{%
Avron%
\ \BBA {} Konikowska%
}{%
{\protect \APACyear {2008}}%
}]{%
ak}
\APACinsertmetastar {%
ak}%
\begin{APACrefauthors}%
Avron, A.%
\BCBT {}\ \BBA {} Konikowska, B.%
\end{APACrefauthors}%
\unskip\
\newblock
\APACrefYearMonthDay{2008}{}{}.
\newblock
{\BBOQ}\APACrefatitle {{Rough Sets and 3-Valued Logics}} {{Rough Sets and
  3-Valued Logics}}.{\BBCQ}
\newblock
\APACjournalVolNumPages{Studia Logica}{90}{}{69--92}.
\PrintBackRefs{\CurrentBib}

\bibitem [\protect \citeauthoryear {%
Banerjee%
\ \BBA {} Chakraborty%
}{%
Banerjee%
\ \BBA {} Chakraborty%
}{%
{\protect \APACyear {1996}}%
}]{%
bc1}
\APACinsertmetastar {%
bc1}%
\begin{APACrefauthors}%
Banerjee, M.%
\BCBT {}\ \BBA {} Chakraborty, M\BPBI K.%
\end{APACrefauthors}%
\unskip\
\newblock
\APACrefYearMonthDay{1996}{}{}.
\newblock
{\BBOQ}\APACrefatitle {{Rough Sets Through Algebraic Logic}} {{Rough Sets
  Through Algebraic Logic}}.{\BBCQ}
\newblock
\APACjournalVolNumPages{Fundamenta Informaticae}{28}{}{211--221}.
\PrintBackRefs{\CurrentBib}

\bibitem [\protect \citeauthoryear {%
Banerjee%
, Chakraborty%
\BCBL {}\ \BBA {} Bunder%
}{%
Banerjee%
\ \protect \BOthers {.}}{%
{\protect \APACyear {2008}}%
}]{%
bcb}
\APACinsertmetastar {%
bcb}%
\begin{APACrefauthors}%
Banerjee, M.%
, Chakraborty, M\BPBI K.%
\BCBL {}\ \BBA {} Bunder, M.%
\end{APACrefauthors}%
\unskip\
\newblock
\APACrefYearMonthDay{2008}{}{}.
\newblock
{\BBOQ}\APACrefatitle {{Some Rough Consequence Logics and Their
  Interrelations}} {{Some Rough Consequence Logics and Their
  Interrelations}}.{\BBCQ}
\newblock
\BIn{} A.~Skowron\ \BBA {} J\BPBI F.~Peters\ (\BEDS), \APACrefbtitle
  {{Transactions on Rough Sets VIII}} {{Transactions on Rough Sets VIII}}\
  (\BVOL\ LNCS 5084, \BPGS\ 1--20).
\newblock
\APACaddressPublisher{}{Springer Verlag}.
\PrintBackRefs{\CurrentBib}

\bibitem [\protect \citeauthoryear {%
Batens%
}{%
Batens%
}{%
{\protect \APACyear {1989}}%
}]{%
bd}
\APACinsertmetastar {%
bd}%
\begin{APACrefauthors}%
Batens, D.%
\end{APACrefauthors}%
\unskip\
\newblock
\APACrefYearMonthDay{1989}{}{}.
\newblock
{\BBOQ}\APACrefatitle {{Dynamic Dialectical Logics}} {{Dynamic Dialectical
  Logics}}.{\BBCQ}
\newblock
\BIn{} \APACrefbtitle {{Paraconsistent Logic - Essays}.} {{Paraconsistent Logic
  - Essays}.}
\newblock
\APACaddressPublisher{Munich}{Philosophia Verlag}.
\PrintBackRefs{\CurrentBib}

\bibitem [\protect \citeauthoryear {%
Batens%
}{%
Batens%
}{%
{\protect \APACyear {1990}}%
}]{%
bd2}
\APACinsertmetastar {%
bd2}%
\begin{APACrefauthors}%
Batens, D.%
\end{APACrefauthors}%
\unskip\
\newblock
\APACrefYearMonthDay{1990}{}{}.
\newblock
{\BBOQ}\APACrefatitle {{Against Global Paraconsistency}} {{Against Global
  Paraconsistency}}.{\BBCQ}
\newblock
\APACjournalVolNumPages{Studies in Soviet Thought}{39}{}{209--229}.
\PrintBackRefs{\CurrentBib}

\bibitem [\protect \citeauthoryear {%
Batens%
}{%
Batens%
}{%
{\protect \APACyear {2006}}%
}]{%
bd1}
\APACinsertmetastar {%
bd1}%
\begin{APACrefauthors}%
Batens, D.%
\end{APACrefauthors}%
\unskip\
\newblock
\APACrefYearMonthDay{2006}{}{}.
\newblock
{\BBOQ}\APACrefatitle {{Narrowing Down Suspicion in Inconsistent Premise Set}}
  {{Narrowing Down Suspicion in Inconsistent Premise Set}}{\BBCQ}\
  [\bibcomputersoftwaremanual].
\newblock
\APACaddressPublisher{Preprint}{}.
\PrintBackRefs{\CurrentBib}

\bibitem [\protect \citeauthoryear {%
Belnap%
}{%
Belnap%
}{%
{\protect \APACyear {1977}}%
}]{%
bn1977}
\APACinsertmetastar {%
bn1977}%
\begin{APACrefauthors}%
Belnap, N\BPBI D.%
\end{APACrefauthors}%
\unskip\
\newblock
\APACrefYearMonthDay{1977}{}{}.
\newblock
{\BBOQ}\APACrefatitle {{A Useful Four-Valued Logic}} {{A Useful Four-Valued
  Logic}}.{\BBCQ}
\newblock
\BIn{} J\BPBI M.~Dunn\ \BBA {} G.~Epstein\ (\BEDS), \APACrefbtitle {{Modern
  Uses of Multiple-Valued Logic}} {{Modern Uses of Multiple-Valued Logic}}\
  (\BPGS\ 5--37).
\newblock
\APACaddressPublisher{Dordrecht}{Springer Netherlands}.
\newblock
\begin{APACrefDOI} \doi{10.1007/978-94-010-1161-7_2} \end{APACrefDOI}
\PrintBackRefs{\CurrentBib}

\bibitem [\protect \citeauthoryear {%
Brandom%
}{%
Brandom%
}{%
{\protect \APACyear {2008}}%
}]{%
br2008}
\APACinsertmetastar {%
br2008}%
\begin{APACrefauthors}%
Brandom, R.%
\end{APACrefauthors}%
\unskip\
\newblock
\APACrefYear{2008}.
\newblock
\APACrefbtitle {{Between Saying and Doing}} {{Between Saying and Doing}}.
\newblock
\APACaddressPublisher{}{Oxford University Press}.
\PrintBackRefs{\CurrentBib}

\bibitem [\protect \citeauthoryear {%
Cattaneo%
}{%
Cattaneo%
}{%
{\protect \APACyear {2018}}%
}]{%
gc2018}
\APACinsertmetastar {%
gc2018}%
\begin{APACrefauthors}%
Cattaneo, G.%
\end{APACrefauthors}%
\unskip\
\newblock
\APACrefYearMonthDay{2018}{}{}.
\newblock
{\BBOQ}\APACrefatitle {{Algebraic Methods for Rough Approximation Spaces by
  Lattice Interior--closure Operations}} {{Algebraic Methods for Rough
  Approximation Spaces by Lattice Interior--closure Operations}}.{\BBCQ}
\newblock
\BIn{} A.~Mani, I.~D{\"u}ntsch\BCBL {}\ \BBA {} G.~Cattaneo\ (\BEDS),
  \APACrefbtitle {{Algebraic Methods in General Rough Sets}} {{Algebraic
  Methods in General Rough Sets}}\ (\BPGS\ 12--124).
\newblock
\APACaddressPublisher{}{Springer International}.
\PrintBackRefs{\CurrentBib}

\bibitem [\protect \citeauthoryear {%
Cattaneo%
\ \BBA {} Ciucci%
}{%
Cattaneo%
\ \BBA {} Ciucci%
}{%
{\protect \APACyear {2004}}%
}]{%
cc}
\APACinsertmetastar {%
cc}%
\begin{APACrefauthors}%
Cattaneo, G.%
\BCBT {}\ \BBA {} Ciucci, D.%
\end{APACrefauthors}%
\unskip\
\newblock
\APACrefYearMonthDay{2004}{}{}.
\newblock
{\BBOQ}\APACrefatitle {{Algebras for Rough Sets and Fuzzy Logics}} {{Algebras
  for Rough Sets and Fuzzy Logics}}.{\BBCQ}
\newblock
\APACjournalVolNumPages{Transactions on Rough Sets}{2}{LNCS 3100}{208--252}.
\PrintBackRefs{\CurrentBib}

\bibitem [\protect \citeauthoryear {%
Cattaneo%
\ \BBA {} Ciucci%
}{%
Cattaneo%
\ \BBA {} Ciucci%
}{%
{\protect \APACyear {2009}}%
}]{%
cc5}
\APACinsertmetastar {%
cc5}%
\begin{APACrefauthors}%
Cattaneo, G.%
\BCBT {}\ \BBA {} Ciucci, D.%
\end{APACrefauthors}%
\unskip\
\newblock
\APACrefYearMonthDay{2009}{}{}.
\newblock
{\BBOQ}\APACrefatitle {{Lattices With Interior and Closure Operators and
  Abstract Approximation Spaces}} {{Lattices With Interior and Closure
  Operators and Abstract Approximation Spaces}}.{\BBCQ}
\newblock
\BIn{} J\BPBI F.~Peters\ \BOthers {.}\ (\BEDS), \APACrefbtitle {{Transactions
  on Rough Sets X, LNCS 5656}} {{Transactions on Rough Sets X, LNCS 5656}}\
  (\BPGS\ 67--116).
\newblock
\APACaddressPublisher{}{Springer}.
\PrintBackRefs{\CurrentBib}

\bibitem [\protect \citeauthoryear {%
Cattaneo%
\ \BBA {} Ciucci%
}{%
Cattaneo%
\ \BBA {} Ciucci%
}{%
{\protect \APACyear {2018}}%
}]{%
gcd2018}
\APACinsertmetastar {%
gcd2018}%
\begin{APACrefauthors}%
Cattaneo, G.%
\BCBT {}\ \BBA {} Ciucci, D.%
\end{APACrefauthors}%
\unskip\
\newblock
\APACrefYearMonthDay{2018}{}{}.
\newblock
{\BBOQ}\APACrefatitle {{Algebraic Methods for Orthopairs and induced Rough
  Approximation Spaces}} {{Algebraic Methods for Orthopairs and induced Rough
  Approximation Spaces}}.{\BBCQ}
\newblock
\BIn{} A.~Mani, I.~D{\"u}ntsch\BCBL {}\ \BBA {} G.~Cattaneo\ (\BEDS),
  \APACrefbtitle {{Algebraic Methods in General Rough Sets}} {{Algebraic
  Methods in General Rough Sets}}\ (\BPGS\ 534--623).
\PrintBackRefs{\CurrentBib}

\bibitem [\protect \citeauthoryear {%
Cattaneo%
, Ciucci%
\BCBL {}\ \BBA {} Dubois%
}{%
Cattaneo%
\ \protect \BOthers {.}}{%
{\protect \APACyear {2011}}%
}]{%
ccd11}
\APACinsertmetastar {%
ccd11}%
\begin{APACrefauthors}%
Cattaneo, G.%
, Ciucci, D.%
\BCBL {}\ \BBA {} Dubois, D.%
\end{APACrefauthors}%
\unskip\
\newblock
\APACrefYearMonthDay{2011}{}{}.
\newblock
{\BBOQ}\APACrefatitle {{Algebraic Models of Deviant Modal Operators Based on De
  Morgan and Kleene Lattices}} {{Algebraic Models of Deviant Modal Operators
  Based on De Morgan and Kleene Lattices}}.{\BBCQ}
\newblock
\APACjournalVolNumPages{Information Sciences}{181}{}{4075--4100}.
\newblock
\begin{APACrefDOI} \doi{10.1016/J.INS.2011.05.008} \end{APACrefDOI}
\PrintBackRefs{\CurrentBib}

\bibitem [\protect \citeauthoryear {%
Chakraborty%
}{%
Chakraborty%
}{%
{\protect \APACyear {2016}}%
}]{%
mkcfi2016}
\APACinsertmetastar {%
mkcfi2016}%
\begin{APACrefauthors}%
Chakraborty, M\BPBI K.%
\end{APACrefauthors}%
\unskip\
\newblock
\APACrefYearMonthDay{2016}{}{}.
\newblock
{\BBOQ}\APACrefatitle {{On Some Issues in the Foundation of Rough Sets: the
  Problem of Definition}} {{On Some Issues in the Foundation of Rough Sets: the
  Problem of Definition}}.{\BBCQ}
\newblock
\APACjournalVolNumPages{Fundamenta Informaticae}{148}{}{123--132}.
\PrintBackRefs{\CurrentBib}

\bibitem [\protect \citeauthoryear {%
Ciucci%
}{%
Ciucci%
}{%
{\protect \APACyear {2009}}%
}]{%
cd3}
\APACinsertmetastar {%
cd3}%
\begin{APACrefauthors}%
Ciucci, D.%
\end{APACrefauthors}%
\unskip\
\newblock
\APACrefYearMonthDay{2009}{}{}.
\newblock
{\BBOQ}\APACrefatitle {{Approximation Algebra and Framework}} {{Approximation
  Algebra and Framework}}.{\BBCQ}
\newblock
\APACjournalVolNumPages{Fundamenta Informaticae}{94}{}{147--161}.
\PrintBackRefs{\CurrentBib}

\bibitem [\protect \citeauthoryear {%
Ciucci%
}{%
Ciucci%
}{%
{\protect \APACyear {2016}}%
}]{%
dc15}
\APACinsertmetastar {%
dc15}%
\begin{APACrefauthors}%
Ciucci, D.%
\end{APACrefauthors}%
\unskip\
\newblock
\APACrefYearMonthDay{2016}{}{}.
\newblock
{\BBOQ}\APACrefatitle {{Orthopairs and Granular Computing }} {{Orthopairs and
  Granular Computing }}.{\BBCQ}
\newblock
\APACjournalVolNumPages{Granular Computing}{In Press}{}{1--12}.
\newblock
\APAChowpublished {Online}.
\newblock
\begin{APACrefDOI} \doi{10.1007/s41066-015-0013-y} \end{APACrefDOI}
\PrintBackRefs{\CurrentBib}

\bibitem [\protect \citeauthoryear {%
Ciucci%
}{%
Ciucci%
}{%
{\protect \APACyear {2017}}%
}]{%
cd2017}
\APACinsertmetastar {%
cd2017}%
\begin{APACrefauthors}%
Ciucci, D.%
\end{APACrefauthors}%
\unskip\
\newblock
\APACrefYearMonthDay{2017}{}{}.
\newblock
{\BBOQ}\APACrefatitle {{Back To The Beginnings: Pawlak'S Definitions of The
  Terms Information System and Rough Set}} {{Back To The Beginnings: Pawlak'S
  Definitions of The Terms Information System and Rough Set}}.{\BBCQ}
\newblock
\BIn{} G.~Wang\ \BOthers {.}\ (\BEDS), \APACrefbtitle {{Thriving Rough Sets}}
  {{Thriving Rough Sets}}\ (\BPGS\ 225--236).
\newblock
\APACaddressPublisher{}{Springer International}.
\PrintBackRefs{\CurrentBib}

\bibitem [\protect \citeauthoryear {%
Ciucci%
, Dubois%
\BCBL {}\ \BBA {} Prade%
}{%
Ciucci%
\ \protect \BOthers {.}}{%
{\protect \APACyear {2012}}%
}]{%
cd9}
\APACinsertmetastar {%
cd9}%
\begin{APACrefauthors}%
Ciucci, D.%
, Dubois, D.%
\BCBL {}\ \BBA {} Prade, H.%
\end{APACrefauthors}%
\unskip\
\newblock
\APACrefYearMonthDay{2012}{}{}.
\newblock
{\BBOQ}\APACrefatitle {{Oppositions in Rough Set Theory}} {{Oppositions in
  Rough Set Theory}}.{\BBCQ}
\newblock
\BIn{} T.~Li\ \BOthers {.}\ (\BEDS), \APACrefbtitle {{RSKT'2012, LNAI 7414}}
  {{RSKT'2012, LNAI 7414}}\ (\BPGS\ 504--513).
\newblock
\APACaddressPublisher{}{Springer-Verlag}.
\PrintBackRefs{\CurrentBib}

\bibitem [\protect \citeauthoryear {%
{da Costa}%
\ \BBA {} Wolf%
}{%
{da Costa}%
\ \BBA {} Wolf%
}{%
{\protect \APACyear {1974}}%
}]{%
dw}
\APACinsertmetastar {%
dw}%
\begin{APACrefauthors}%
{da Costa}, N.%
\BCBT {}\ \BBA {} Wolf, R\BPBI G.%
\end{APACrefauthors}%
\unskip\
\newblock
\APACrefYearMonthDay{1974}{}{}.
\newblock
{\BBOQ}\APACrefatitle {{Studies in Paraconsistent Logic-1 : The Dialectical
  Principle of The Unity of Opposites}} {{Studies in Paraconsistent Logic-1 :
  The Dialectical Principle of The Unity of Opposites}}.{\BBCQ}
\newblock
\APACjournalVolNumPages{Philosophia - Philosophical Quarterly of
  Israel}{15}{}{497--510}.
\PrintBackRefs{\CurrentBib}

\bibitem [\protect \citeauthoryear {%
D{\"u}ntsch%
\ \BBA {} Gediga%
}{%
D{\"u}ntsch%
\ \BBA {} Gediga%
}{%
{\protect \APACyear {2000}}%
}]{%
gdu}
\APACinsertmetastar {%
gdu}%
\begin{APACrefauthors}%
D{\"u}ntsch, I.%
\BCBT {}\ \BBA {} Gediga, G.%
\end{APACrefauthors}%
\unskip\
\newblock
\APACrefYear{2000}.
\newblock
\APACrefbtitle {{Rough set data analysis: A road to non-invasive knowledge
  discovery}} {{Rough set data analysis: A road to non-invasive knowledge
  discovery}}.
\newblock
\APACaddressPublisher{}{Methodos Publishers}.
\PrintBackRefs{\CurrentBib}

\bibitem [\protect \citeauthoryear {%
Ficara%
}{%
Ficara%
}{%
{\protect \APACyear {2014}}%
}]{%
ef14}
\APACinsertmetastar {%
ef14}%
\begin{APACrefauthors}%
Ficara, E.%
\end{APACrefauthors}%
\unskip\
\newblock
\APACrefYearMonthDay{2014}{}{}.
\newblock
{\BBOQ}\APACrefatitle {{Hegel{\rq}s Glutty Negation}} {{Hegel{\rq}s Glutty
  Negation}}.{\BBCQ}
\newblock
\APACjournalVolNumPages{History and Philosophy of Logic}{}{}{1--10}.
\newblock
\begin{APACrefURL} \url{http://dx.doi.org/10.1080/01445340.2014.940698}
  \end{APACrefURL}
\PrintBackRefs{\CurrentBib}

\bibitem [\protect \citeauthoryear {%
Gabbay%
}{%
Gabbay%
}{%
{\protect \APACyear {1996}}%
}]{%
dg96}
\APACinsertmetastar {%
dg96}%
\begin{APACrefauthors}%
Gabbay, D.%
\end{APACrefauthors}%
\unskip\
\newblock
\APACrefYear{1996}.
\newblock
(\PrintOrdinal{First}\ \BEd, \BVOL~1).
\newblock
\APACaddressPublisher{}{Clarendon Press}.
\PrintBackRefs{\CurrentBib}

\bibitem [\protect \citeauthoryear {%
Gorren%
}{%
Gorren%
}{%
{\protect \APACyear {1981}}%
}]{%
gj}
\APACinsertmetastar {%
gj}%
\begin{APACrefauthors}%
Gorren, J.%
\end{APACrefauthors}%
\unskip\
\newblock
\APACrefYearMonthDay{1981}{}{}.
\newblock
{\BBOQ}\APACrefatitle {{Theorie Analytique de la Dialectique}} {{Theorie
  Analytique de la Dialectique}}.{\BBCQ}
\newblock
\APACjournalVolNumPages{South West Philosophical Studies}{6}{}{41--47}.
\PrintBackRefs{\CurrentBib}

\bibitem [\protect \citeauthoryear {%
Gr{\"a}tzer%
}{%
Gr{\"a}tzer%
}{%
{\protect \APACyear {1998}}%
}]{%
gg1998}
\APACinsertmetastar {%
gg1998}%
\begin{APACrefauthors}%
Gr{\"a}tzer, G.%
\end{APACrefauthors}%
\unskip\
\newblock
\APACrefYear{1998}.
\newblock
\APACrefbtitle {{General Lattice Theory}} {{General Lattice Theory}}.
\newblock
\APACaddressPublisher{}{Birkhauser}.
\PrintBackRefs{\CurrentBib}

\bibitem [\protect \citeauthoryear {%
Grim%
}{%
Grim%
}{%
{\protect \APACyear {2007}}%
}]{%
gp}
\APACinsertmetastar {%
gp}%
\begin{APACrefauthors}%
Grim, P.%
\end{APACrefauthors}%
\unskip\
\newblock
\APACrefYearMonthDay{2007}{}{}.
\newblock
{\BBOQ}\APACrefatitle {{What is Contradiction ?}} {{What is Contradiction
  ?}}{\BBCQ}
\newblock
\BIn{} G.~Priest\ (\BED), \APACrefbtitle {{The Law of Non-Contradiction}.}
  {{The Law of Non-Contradiction}.}
\newblock
\APACaddressPublisher{}{Oxford Universities Press}.
\PrintBackRefs{\CurrentBib}

\bibitem [\protect \citeauthoryear {%
Hofmann%
}{%
Hofmann%
}{%
{\protect \APACyear {1986}}%
}]{%
hw}
\APACinsertmetastar {%
hw}%
\begin{APACrefauthors}%
Hofmann, W\BPBI C.%
\end{APACrefauthors}%
\unskip\
\newblock
\APACrefYearMonthDay{1986}{}{}.
\newblock
{\BBOQ}\APACrefatitle {{A Formal Model for Dialectical Psychology}} {{A Formal
  Model for Dialectical Psychology}}.{\BBCQ}
\newblock
\APACjournalVolNumPages{International Logic Review}{}{}{40--67}.
\PrintBackRefs{\CurrentBib}

\bibitem [\protect \citeauthoryear {%
Hyde%
\ \BBA {} Colyvan%
}{%
Hyde%
\ \BBA {} Colyvan%
}{%
{\protect \APACyear {2008}}%
}]{%
hc}
\APACinsertmetastar {%
hc}%
\begin{APACrefauthors}%
Hyde, D.%
\BCBT {}\ \BBA {} Colyvan, M.%
\end{APACrefauthors}%
\unskip\
\newblock
\APACrefYearMonthDay{2008}{}{}.
\newblock
{\BBOQ}\APACrefatitle {{Paraconsistent Vagueness-Why Not?}} {{Paraconsistent
  Vagueness-Why Not?}}{\BBCQ}
\newblock
\APACjournalVolNumPages{Australasian J. Logic}{6}{}{207--225}.
\PrintBackRefs{\CurrentBib}

\bibitem [\protect \citeauthoryear {%
Ioan%
}{%
Ioan%
}{%
{\protect \APACyear {1998}}%
}]{%
ip}
\APACinsertmetastar {%
ip}%
\begin{APACrefauthors}%
Ioan, P.%
\end{APACrefauthors}%
\unskip\
\newblock
\APACrefYear{1998}.
\newblock
\APACrefbtitle {{Logic and Dialectics}} {{Logic and Dialectics}}.
\newblock
\APACaddressPublisher{}{Al. I. Cuza Universities Press}.
\PrintBackRefs{\CurrentBib}

\bibitem [\protect \citeauthoryear {%
Iwinski%
}{%
Iwinski%
}{%
{\protect \APACyear {1988}}%
}]{%
it2}
\APACinsertmetastar {%
it2}%
\begin{APACrefauthors}%
Iwinski, T\BPBI B.%
\end{APACrefauthors}%
\unskip\
\newblock
\APACrefYearMonthDay{1988}{}{}.
\newblock
{\BBOQ}\APACrefatitle {{Rough Orders and Rough Concepts}} {{Rough Orders and
  Rough Concepts}}.{\BBCQ}
\newblock
\APACjournalVolNumPages{Bull. Pol. Acad. Sci (Math)}{(3--4)}{}{187--192}.
\PrintBackRefs{\CurrentBib}

\bibitem [\protect \citeauthoryear {%
Koh%
}{%
Koh%
}{%
{\protect \APACyear {1983}}%
}]{%
koh}
\APACinsertmetastar {%
koh}%
\begin{APACrefauthors}%
Koh, K.%
\end{APACrefauthors}%
\unskip\
\newblock
\APACrefYearMonthDay{1983}{}{}.
\newblock
{\BBOQ}\APACrefatitle {{On The Lattice of Maximum-Sized Antichains of A Finite
  Poset}} {{On The Lattice of Maximum-Sized Antichains of A Finite
  Poset}}.{\BBCQ}
\newblock
\APACjournalVolNumPages{Algebra Universalis}{17}{}{73--86}.
\PrintBackRefs{\CurrentBib}

\bibitem [\protect \citeauthoryear {%
Lin%
}{%
Lin%
}{%
{\protect \APACyear {2009}}%
}]{%
tyl}
\APACinsertmetastar {%
tyl}%
\begin{APACrefauthors}%
Lin, T\BPBI Y.%
\end{APACrefauthors}%
\unskip\
\newblock
\APACrefYearMonthDay{2009}{}{}.
\newblock
{\BBOQ}\APACrefatitle {{Granular Computing-1: The Concept of Granulation and
  Its Formal Model}} {{Granular Computing-1: The Concept of Granulation and Its
  Formal Model}}.{\BBCQ}
\newblock
\APACjournalVolNumPages{Int. J. Granular Computing, Rough Sets and Int
  Systems}{1}{1}{21--42}.
\PrintBackRefs{\CurrentBib}

\bibitem [\protect \citeauthoryear {%
Mani%
}{%
Mani%
}{%
{\protect \APACyear {1999}}%
}]{%
am999}
\APACinsertmetastar {%
am999}%
\begin{APACrefauthors}%
Mani, A.%
\end{APACrefauthors}%
\unskip\
\newblock
\APACrefYearMonthDay{1999}{}{}.
\newblock
\APACrefbtitle {{Towards Formal Dialectical logics}} {{Towards Formal
  Dialectical logics}}\ \APACbVolEdTR{}{\BTR{}}.
\PrintBackRefs{\CurrentBib}

\bibitem [\protect \citeauthoryear {%
Mani%
}{%
Mani%
}{%
{\protect \APACyear {2005}}%
}]{%
am3}
\APACinsertmetastar {%
am3}%
\begin{APACrefauthors}%
Mani, A.%
\end{APACrefauthors}%
\unskip\
\newblock
\APACrefYearMonthDay{2005}{}{}.
\newblock
{\BBOQ}\APACrefatitle {{Super Rough Semantics}} {{Super Rough
  Semantics}}.{\BBCQ}
\newblock
\APACjournalVolNumPages{Fundamenta Informaticae}{65}{3}{249--261}.
\PrintBackRefs{\CurrentBib}

\bibitem [\protect \citeauthoryear {%
Mani%
}{%
Mani%
}{%
{\protect \APACyear {2008}}%
}]{%
am24}
\APACinsertmetastar {%
am24}%
\begin{APACrefauthors}%
Mani, A.%
\end{APACrefauthors}%
\unskip\
\newblock
\APACrefYearMonthDay{2008}{}{}.
\newblock
{\BBOQ}\APACrefatitle {{Esoteric Rough Set Theory-Algebraic Semantics of a
  Generalized VPRS and VPRFS}} {{Esoteric Rough Set Theory-Algebraic Semantics
  of a Generalized VPRS and VPRFS}}.{\BBCQ}
\newblock
\BIn{} A.~Skowron\ \BBA {} J\BPBI F.~Peters\ (\BEDS), \APACrefbtitle
  {{Transactions on Rough Sets, LNCS 5084}} {{Transactions on Rough Sets, LNCS
  5084}}\ (\BVOL\ VIII, \BPGS\ 182--231).
\newblock
\APACaddressPublisher{}{Springer Verlag}.
\PrintBackRefs{\CurrentBib}

\bibitem [\protect \citeauthoryear {%
Mani%
}{%
Mani%
}{%
{\protect \APACyear {2009}}%
{\protect \APACexlab {{\protect \BCnt {1}}}}}]{%
am105}
\APACinsertmetastar {%
am105}%
\begin{APACrefauthors}%
Mani, A.%
\end{APACrefauthors}%
\unskip\
\newblock
\APACrefYearMonthDay{2009{\protect \BCnt {1}}}{}{}.
\newblock
{\BBOQ}\APACrefatitle {{Algebraic Semantics of Similarity-Based Bitten Rough
  Set Theory}} {{Algebraic Semantics of Similarity-Based Bitten Rough Set
  Theory}}.{\BBCQ}
\newblock
\APACjournalVolNumPages{Fundamenta Informaticae}{97}{1-2}{177--197}.
\PrintBackRefs{\CurrentBib}

\bibitem [\protect \citeauthoryear {%
Mani%
}{%
Mani%
}{%
{\protect \APACyear {2009}}%
{\protect \APACexlab {{\protect \BCnt {2}}}}}]{%
am699}
\APACinsertmetastar {%
am699}%
\begin{APACrefauthors}%
Mani, A.%
\end{APACrefauthors}%
\unskip\
\newblock
\APACrefYearMonthDay{2009{\protect \BCnt {2}}}{}{}.
\newblock
{\BBOQ}\APACrefatitle {{Integrated Dialectical Logics for Relativised General
  Rough Set Theory}} {{Integrated Dialectical Logics for Relativised General
  Rough Set Theory}}.{\BBCQ}
\newblock
\BIn{} \APACrefbtitle {{Internat. Conf. on Rough Sets, Fuzzy Sets and Soft
  Computing, Agartala, India}} {{Internat. Conf. on Rough Sets, Fuzzy Sets and
  Soft Computing, Agartala, India}}\ (\BPG~6pp (Refereed)).
\newblock
\APACaddressPublisher{}{http://arxiv.org/abs/0909.4876}.
\newblock
\begin{APACrefURL} \url{http://arxiv.org/abs/0909.4876} \end{APACrefURL}
\PrintBackRefs{\CurrentBib}

\bibitem [\protect \citeauthoryear {%
Mani%
}{%
Mani%
}{%
{\protect \APACyear {2011}}%
}]{%
am99}
\APACinsertmetastar {%
am99}%
\begin{APACrefauthors}%
Mani, A.%
\end{APACrefauthors}%
\unskip\
\newblock
\APACrefYearMonthDay{2011}{}{}.
\newblock
{\BBOQ}\APACrefatitle {{Choice Inclusive General Rough Semantics}} {{Choice
  Inclusive General Rough Semantics}}.{\BBCQ}
\newblock
\APACjournalVolNumPages{Information Sciences}{181}{6}{1097--1115}.
\newblock
\begin{APACrefURL} \url{http://dx.doi.org/10.1016/j.ins.2010.11.016}
  \end{APACrefURL}
\newblock
\begin{APACrefDOI} \doi{10.1016/j.ins.2010.11.016} \end{APACrefDOI}
\PrintBackRefs{\CurrentBib}

\bibitem [\protect \citeauthoryear {%
Mani%
}{%
Mani%
}{%
{\protect \APACyear {2012}}%
{\protect \APACexlab {{\protect \BCnt {1}}}}}]{%
am1800}
\APACinsertmetastar {%
am1800}%
\begin{APACrefauthors}%
Mani, A.%
\end{APACrefauthors}%
\unskip\
\newblock
\APACrefYearMonthDay{2012{\protect \BCnt {1}}}{}{}.
\newblock
{\BBOQ}\APACrefatitle {{Axiomatic Approach to Granular Correspondences}}
  {{Axiomatic Approach to Granular Correspondences}}.{\BBCQ}
\newblock
\BIn{} T.~Li\ \BOthers {.}\ (\BEDS), \APACrefbtitle {{Proceedings of
  RSKT'2012}} {{Proceedings of RSKT'2012}}\ (\BVOL\ LNAI 7414, \BPGS\
  482--487).
\newblock
\APACaddressPublisher{}{Springer-Verlag}.
\PrintBackRefs{\CurrentBib}

\bibitem [\protect \citeauthoryear {%
Mani%
}{%
Mani%
}{%
{\protect \APACyear {2012}}%
{\protect \APACexlab {{\protect \BCnt {2}}}}}]{%
am240}
\APACinsertmetastar {%
am240}%
\begin{APACrefauthors}%
Mani, A.%
\end{APACrefauthors}%
\unskip\
\newblock
\APACrefYearMonthDay{2012{\protect \BCnt {2}}}{}{}.
\newblock
{\BBOQ}\APACrefatitle {{Dialectics of Counting and The Mathematics of
  Vagueness}} {{Dialectics of Counting and The Mathematics of
  Vagueness}}.{\BBCQ}
\newblock
\BIn{} J\BPBI F.~Peters\ \BBA {} A.~Skowron\ (\BEDS), \APACrefbtitle
  {{Transactions on Rough Sets , LNCS 7255}} {{Transactions on Rough Sets ,
  LNCS 7255}}\ (\BVOL~XV, \BPGS\ 122--180).
\newblock
\APACaddressPublisher{}{Springer Verlag}.
\PrintBackRefs{\CurrentBib}

\bibitem [\protect \citeauthoryear {%
Mani%
}{%
Mani%
}{%
{\protect \APACyear {2013}}%
}]{%
am909}
\APACinsertmetastar {%
am909}%
\begin{APACrefauthors}%
Mani, A.%
\end{APACrefauthors}%
\unskip\
\newblock
\APACrefYearMonthDay{2013}{}{}.
\newblock
{\BBOQ}\APACrefatitle {{Towards Logics of Some Rough Perspectives of
  Knowledge}} {{Towards Logics of Some Rough Perspectives of
  Knowledge}}.{\BBCQ}
\newblock
\BIn{} Z.~Suraj\ \BBA {} A.~Skowron\ (\BEDS), \APACrefbtitle {{Intelligent
  Systems Reference Library dedicated to the memory of Prof. Pawlak ISRL 43 }}
  {{Intelligent Systems Reference Library dedicated to the memory of Prof.
  Pawlak ISRL 43 }}\ (\BPGS\ 419--444).
\newblock
\APACaddressPublisher{}{Springer Verlag}.
\PrintBackRefs{\CurrentBib}

\bibitem [\protect \citeauthoryear {%
Mani%
}{%
Mani%
}{%
{\protect \APACyear {2013-15}}%
}]{%
am3690}
\APACinsertmetastar {%
am3690}%
\begin{APACrefauthors}%
Mani, A.%
\end{APACrefauthors}%
\unskip\
\newblock
\APACrefYearMonthDay{2013-15}{}{}.
\newblock
{\BBOQ}\APACrefatitle {{Approximation Dialectics of Proto-Transitive Rough
  Sets}} {{Approximation Dialectics of Proto-Transitive Rough Sets}}.{\BBCQ}
\newblock
\BIn{} M\BPBI K.~Chakraborty, A.~Skowron\BCBL {}\ \BBA {} S.~Kar\ (\BEDS),
  \APACrefbtitle {{Facets of Uncertainties and Applications}} {{Facets of
  Uncertainties and Applications}}\ (\BPGS\ 99--109).
\newblock
\APACaddressPublisher{}{Springer}.
\PrintBackRefs{\CurrentBib}

\bibitem [\protect \citeauthoryear {%
Mani%
}{%
Mani%
}{%
{\protect \APACyear {2014}}%
{\protect \APACexlab {{\protect \BCnt {1}}}}}]{%
am6000}
\APACinsertmetastar {%
am6000}%
\begin{APACrefauthors}%
Mani, A.%
\end{APACrefauthors}%
\unskip\
\newblock
\APACrefYear{2014{\protect \BCnt {1}}}.
\newblock
\APACrefbtitle {{Algebraic Semantics of Proto-Transitive Rough Sets}}
  {{Algebraic Semantics of Proto-Transitive Rough Sets}}\
  (\PrintOrdinal{First}\ \BEd).
\newblock
\APACaddressPublisher{}{Arxiv:1410.0572}.
\newblock
\begin{APACrefURL} \url{http://arxiv.org/abs/1410.0572} \end{APACrefURL}
\PrintBackRefs{\CurrentBib}

\bibitem [\protect \citeauthoryear {%
Mani%
}{%
Mani%
}{%
{\protect \APACyear {2014}}%
{\protect \APACexlab {{\protect \BCnt {2}}}}}]{%
am3930}
\APACinsertmetastar {%
am3930}%
\begin{APACrefauthors}%
Mani, A.%
\end{APACrefauthors}%
\unskip\
\newblock
\APACrefYearMonthDay{2014{\protect \BCnt {2}}}{}{}.
\newblock
{\BBOQ}\APACrefatitle {{Ontology, Rough Y-Systems and Dependence}} {{Ontology,
  Rough Y-Systems and Dependence}}.{\BBCQ}
\newblock
\APACjournalVolNumPages{Internat. J of Comp. Sci. and Appl.}{11}{2}{114--136}.
\newblock
\APACrefnote{Special Issue of IJCSA on Computational Intelligence}
\PrintBackRefs{\CurrentBib}

\bibitem [\protect \citeauthoryear {%
Mani%
}{%
Mani%
}{%
{\protect \APACyear {2015}}%
}]{%
am6999}
\APACinsertmetastar {%
am6999}%
\begin{APACrefauthors}%
Mani, A.%
\end{APACrefauthors}%
\unskip\
\newblock
\APACrefYearMonthDay{2015}{}{}.
\newblock
{\BBOQ}\APACrefatitle {{Antichain Based Semantics for Rough Sets}} {{Antichain
  Based Semantics for Rough Sets}}.{\BBCQ}
\newblock
\BIn{} D.~Ciucci, G.~Wang, S.~Mitra\BCBL {}\ \BBA {} W.~Wu\ (\BEDS),
  \APACrefbtitle {{RSKT 2015}} {{RSKT 2015}}\ (\BPGS\ 319--330).
\newblock
\APACaddressPublisher{}{Springer-Verlag}.
\PrintBackRefs{\CurrentBib}

\bibitem [\protect \citeauthoryear {%
Mani%
}{%
Mani%
}{%
{\protect \APACyear {2016}}%
{\protect \APACexlab {{\protect \BCnt {1}}}}}]{%
am9501}
\APACinsertmetastar {%
am9501}%
\begin{APACrefauthors}%
Mani, A.%
\end{APACrefauthors}%
\unskip\
\newblock
\APACrefYearMonthDay{2016{\protect \BCnt {1}}}{}{}.
\newblock
{\BBOQ}\APACrefatitle {{Algebraic Semantics of Proto-Transitive Rough Sets}}
  {{Algebraic Semantics of Proto-Transitive Rough Sets}}.{\BBCQ}
\newblock
\BIn{} J\BPBI F.~Peters\ \BBA {} A.~Skowron\ (\BEDS), \APACrefbtitle
  {{Transactions on Rough Sets LNCS 10020}} {{Transactions on Rough Sets LNCS
  10020}}\ (\BVOL~XX, \BPGS\ 51--108).
\newblock
\APACaddressPublisher{}{Springer Verlag}.
\PrintBackRefs{\CurrentBib}

\bibitem [\protect \citeauthoryear {%
Mani%
}{%
Mani%
}{%
{\protect \APACyear {2016}}%
{\protect \APACexlab {{\protect \BCnt {2}}}}}]{%
am9699}
\APACinsertmetastar {%
am9699}%
\begin{APACrefauthors}%
Mani, A.%
\end{APACrefauthors}%
\unskip\
\newblock
\APACrefYearMonthDay{2016{\protect \BCnt {2}}}{October}{}.
\newblock
{\BBOQ}\APACrefatitle {{On Deductive Systems of AC Semantics for Rough Sets}}
  {{On Deductive Systems of AC Semantics for Rough Sets}}.{\BBCQ}
\newblock
\APACjournalVolNumPages{ArXiv. Math}{}{1610.02634v1}{1--12}.
\PrintBackRefs{\CurrentBib}

\bibitem [\protect \citeauthoryear {%
Mani%
}{%
Mani%
}{%
{\protect \APACyear {2016}}%
{\protect \APACexlab {{\protect \BCnt {3}}}}}]{%
am6900}
\APACinsertmetastar {%
am6900}%
\begin{APACrefauthors}%
Mani, A.%
\end{APACrefauthors}%
\unskip\
\newblock
\APACrefYearMonthDay{2016{\protect \BCnt {3}}}{}{}.
\newblock
{\BBOQ}\APACrefatitle {{Pure Rough Mereology and Counting}} {{Pure Rough
  Mereology and Counting}}.{\BBCQ}
\newblock
\BIn{} \APACrefbtitle {{WIECON,2016}} {{WIECON,2016}}\ (\BPGS\ 1--8).
\newblock
\APACaddressPublisher{}{IEEXPlore}.
\PrintBackRefs{\CurrentBib}

\bibitem [\protect \citeauthoryear {%
Mani%
}{%
Mani%
}{%
{\protect \APACyear {2017}}%
{\protect \APACexlab {{\protect \BCnt {1}}}}}]{%
am9006}
\APACinsertmetastar {%
am9006}%
\begin{APACrefauthors}%
Mani, A.%
\end{APACrefauthors}%
\unskip\
\newblock
\APACrefYearMonthDay{2017{\protect \BCnt {1}}}{}{}.
\newblock
{\BBOQ}\APACrefatitle {{Approximations From Anywhere and General Rough Sets}}
  {{Approximations From Anywhere and General Rough Sets}}.{\BBCQ}
\newblock
\BIn{} L.~Polkowski\ \BOthers {.}\ (\BEDS), \APACrefbtitle {{Rough Sets-2,
  IJCRS,2017}} {{Rough Sets-2, IJCRS,2017}}\ (\BPGS\ 3--22).
\newblock
\APACaddressPublisher{}{Springer International}.
\newblock
\begin{APACrefDOI} \doi{10.1007/978-3-319-60840-2} \end{APACrefDOI}
\PrintBackRefs{\CurrentBib}

\bibitem [\protect \citeauthoryear {%
Mani%
}{%
Mani%
}{%
{\protect \APACyear {2017}}%
{\protect \APACexlab {{\protect \BCnt {2}}}}}]{%
am9114}
\APACinsertmetastar {%
am9114}%
\begin{APACrefauthors}%
Mani, A.%
\end{APACrefauthors}%
\unskip\
\newblock
\APACrefYearMonthDay{2017{\protect \BCnt {2}}}{}{}.
\newblock
{\BBOQ}\APACrefatitle {{Knowledge and Consequence in AC Semantics for General
  Rough Sets }} {{Knowledge and Consequence in AC Semantics for General Rough
  Sets }}.{\BBCQ}
\newblock
\BIn{} G.~Wang, A.~Skowron\BCBL {}\ \BBA {} D.~Yao Y. Y.and~{\'S}l{\c e}zak\
  (\BEDS), \APACrefbtitle {{Thriving Rough Sets----10th Anniversary - Honoring
  Prof Pawlak \& 35 years of Rough Sets,}} {{Thriving Rough Sets----10th
  Anniversary - Honoring Prof Pawlak \& 35 years of Rough Sets,}}\ (\BVOL~708,
  \BPGS\ 237--268).
\newblock
\APACaddressPublisher{}{Springer International Publishing}.
\newblock
\begin{APACrefDOI} \doi{10.1007/978-3-319-54966-8} \end{APACrefDOI}
\PrintBackRefs{\CurrentBib}

\bibitem [\protect \citeauthoryear {%
Mani%
}{%
Mani%
}{%
{\protect \APACyear {2018}}%
{\protect \APACexlab {{\protect \BCnt {1}}}}}]{%
am501}
\APACinsertmetastar {%
am501}%
\begin{APACrefauthors}%
Mani, A.%
\end{APACrefauthors}%
\unskip\
\newblock
\APACrefYearMonthDay{2018{\protect \BCnt {1}}}{}{}.
\newblock
{\BBOQ}\APACrefatitle {{Algebraic Methods for Granular Rough Sets}} {{Algebraic
  Methods for Granular Rough Sets}}.{\BBCQ}
\newblock
\BIn{} A.~Mani, I.~D{\"u}ntsch\BCBL {}\ \BBA {} G.~Cattaneo\ (\BEDS),
  \APACrefbtitle {{Algebraic Methods in General Rough Sets}} {{Algebraic
  Methods in General Rough Sets}}\ (\BPGS\ 125--305).
\newblock
\APACaddressPublisher{}{Springer International}.
\PrintBackRefs{\CurrentBib}

\bibitem [\protect \citeauthoryear {%
Mani%
}{%
Mani%
}{%
{\protect \APACyear {2018}}%
{\protect \APACexlab {{\protect \BCnt {2}}}}}]{%
am5019}
\APACinsertmetastar {%
am5019}%
\begin{APACrefauthors}%
Mani, A.%
\end{APACrefauthors}%
\unskip\
\newblock
\APACrefYearMonthDay{2018{\protect \BCnt {2}}}{}{}.
\newblock
{\BBOQ}\APACrefatitle {{Representation, Duality and Beyond}} {{Representation,
  Duality and Beyond}}.{\BBCQ}
\newblock
\BIn{} A.~Mani, I.~D{\"u}ntsch\BCBL {}\ \BBA {} G.~Cattaneo\ (\BEDS),
  \APACrefbtitle {{Algebraic Methods in General Rough Sets}} {{Algebraic
  Methods in General Rough Sets}}\ (\BPGS\ 425--533).
\newblock
\APACaddressPublisher{}{Springer International}.
\PrintBackRefs{\CurrentBib}

\bibitem [\protect \citeauthoryear {%
Marx%
\ \BBA {} Engels%
}{%
Marx%
\ \BBA {} Engels%
}{%
{\protect \APACyear {1989}}%
}]{%
mecw24}
\APACinsertmetastar {%
mecw24}%
\begin{APACrefauthors}%
Marx, K.%
\BCBT {}\ \BBA {} Engels, F.%
\end{APACrefauthors}%
\unskip\
\newblock
\APACrefYear{1989}.
\newblock
\APACrefbtitle {{Marx and Engels: Collected Works: Vol 24}} {{Marx and Engels:
  Collected Works: Vol 24}}.
\newblock
\APACaddressPublisher{}{Progress Publishers}.
\PrintBackRefs{\CurrentBib}

\bibitem [\protect \citeauthoryear {%
McGill%
\ \BBA {} Parry%
}{%
McGill%
\ \BBA {} Parry%
}{%
{\protect \APACyear {1948}}%
}]{%
mvp}
\APACinsertmetastar {%
mvp}%
\begin{APACrefauthors}%
McGill, V\BPBI P.%
\BCBT {}\ \BBA {} Parry, W\BPBI T.%
\end{APACrefauthors}%
\unskip\
\newblock
\APACrefYearMonthDay{1948}{}{}.
\newblock
{\BBOQ}\APACrefatitle {{The Unity of Opposites - A Dialectical Principle}}
  {{The Unity of Opposites - A Dialectical Principle}}.{\BBCQ}
\newblock
\APACjournalVolNumPages{Science and Society}{12}{}{418--444}.
\PrintBackRefs{\CurrentBib}

\bibitem [\protect \citeauthoryear {%
Moore%
\ \BBA {} Shannon%
}{%
Moore%
\ \BBA {} Shannon%
}{%
{\protect \APACyear {1956}}%
}]{%
sha56}
\APACinsertmetastar {%
sha56}%
\begin{APACrefauthors}%
Moore, E\BPBI F.%
\BCBT {}\ \BBA {} Shannon, C\BPBI E.%
\end{APACrefauthors}%
\unskip\
\newblock
\APACrefYearMonthDay{1956}{}{}.
\newblock
{\BBOQ}\APACrefatitle {{Reliable Circuits Using Less Reliable Relays-I, II}}
  {{Reliable Circuits Using Less Reliable Relays-I, II}}.{\BBCQ}
\newblock
\APACjournalVolNumPages{Bell Systems Technical Journal}{}{}{191--208,
  281--297}.
\PrintBackRefs{\CurrentBib}

\bibitem [\protect \citeauthoryear {%
Moretti%
}{%
Moretti%
}{%
{\protect \APACyear {2012}}%
}]{%
mo2}
\APACinsertmetastar {%
mo2}%
\begin{APACrefauthors}%
Moretti, A.%
\end{APACrefauthors}%
\unskip\
\newblock
\APACrefYearMonthDay{2012}{}{}.
\newblock
{\BBOQ}\APACrefatitle {{Why The Logical Hexagon?}} {{Why The Logical
  Hexagon?}}{\BBCQ}
\newblock
\APACjournalVolNumPages{Logica Univers}{6}{}{69--107}.
\PrintBackRefs{\CurrentBib}

\bibitem [\protect \citeauthoryear {%
Pagliani%
}{%
Pagliani%
}{%
{\protect \APACyear {1998}}%
}]{%
pp1998}
\APACinsertmetastar {%
pp1998}%
\begin{APACrefauthors}%
Pagliani, P.%
\end{APACrefauthors}%
\unskip\
\newblock
\APACrefYearMonthDay{1998}{}{}.
\newblock
{\BBOQ}\APACrefatitle {{Rough Set Theory and Logico-algebraic Structures}}
  {{Rough Set Theory and Logico-algebraic Structures}}.{\BBCQ}
\newblock
\BIn{} E.~Or{\l}owska\ (\BED), \APACrefbtitle {{Incomplete Information: Rough
  Set Analysis}} {{Incomplete Information: Rough Set Analysis}}\ (\BPGS\
  109--190).
\newblock
\APACaddressPublisher{}{Physica Verlag}.
\PrintBackRefs{\CurrentBib}

\bibitem [\protect \citeauthoryear {%
Pagliani%
}{%
Pagliani%
}{%
{\protect \APACyear {2000}}%
}]{%
pp2000}
\APACinsertmetastar {%
pp2000}%
\begin{APACrefauthors}%
Pagliani, P.%
\end{APACrefauthors}%
\unskip\
\newblock
\APACrefYearMonthDay{2000}{}{}.
\newblock
{\BBOQ}\APACrefatitle {{Local Classical Behaviours in Three-Valued Logics and
  Connected Systems. Part 1}} {{Local Classical Behaviours in Three-Valued
  Logics and Connected Systems. Part 1}}.{\BBCQ}
\newblock
\APACjournalVolNumPages{Journal of Multiple valued Logics}{5}{}{327--347}.
\PrintBackRefs{\CurrentBib}

\bibitem [\protect \citeauthoryear {%
Pagliani%
}{%
Pagliani%
}{%
{\protect \APACyear {2016}}%
}]{%
pp20}
\APACinsertmetastar {%
pp20}%
\begin{APACrefauthors}%
Pagliani, P.%
\end{APACrefauthors}%
\unskip\
\newblock
\APACrefYearMonthDay{2016}{}{}.
\newblock
{\BBOQ}\APACrefatitle {{Covering Rough Sets and Formal Topology -- A Uniform
  Approach Through Intensional and Extensional Constructors }} {{Covering Rough
  Sets and Formal Topology -- A Uniform Approach Through Intensional and
  Extensional Constructors }}.{\BBCQ}
\newblock
\BIn{} J\BPBI F.~Peters\ \BBA {} A.~Skowron\ (\BEDS), \APACrefbtitle
  {{Transactions on Rough Sets LNCS 10020}} {{Transactions on Rough Sets LNCS
  10020}}\ (\BVOL~XX, \BPGS\ 109--145).
\newblock
\APACaddressPublisher{}{Springer-Verlag}.
\newblock
\begin{APACrefDOI} \doi{10.1007/978-3-662-53611-7} \end{APACrefDOI}
\PrintBackRefs{\CurrentBib}

\bibitem [\protect \citeauthoryear {%
Pagliani%
\ \BBA {} Chakraborty%
}{%
Pagliani%
\ \BBA {} Chakraborty%
}{%
{\protect \APACyear {2008}}%
}]{%
ppm2}
\APACinsertmetastar {%
ppm2}%
\begin{APACrefauthors}%
Pagliani, P.%
\BCBT {}\ \BBA {} Chakraborty, M.%
\end{APACrefauthors}%
\unskip\
\newblock
\APACrefYear{2008}.
\newblock
\APACrefbtitle {{A Geometry of Approximation: Rough Set Theory: Logic, Algebra
  and Topology of Conceptual Patterns}} {{A Geometry of Approximation: Rough
  Set Theory: Logic, Algebra and Topology of Conceptual Patterns}}.
\newblock
\APACaddressPublisher{Berlin}{Springer}.
\PrintBackRefs{\CurrentBib}

\bibitem [\protect \citeauthoryear {%
Pawlak%
}{%
Pawlak%
}{%
{\protect \APACyear {1987}}%
}]{%
zp0}
\APACinsertmetastar {%
zp0}%
\begin{APACrefauthors}%
Pawlak, Z.%
\end{APACrefauthors}%
\unskip\
\newblock
\APACrefYearMonthDay{1987}{}{}.
\newblock
{\BBOQ}\APACrefatitle {{Rough Logic}} {{Rough Logic}}.{\BBCQ}
\newblock
\APACjournalVolNumPages{Bull.Pol.Acad.Sci (Tech)}{35}{}{253--258}.
\PrintBackRefs{\CurrentBib}

\bibitem [\protect \citeauthoryear {%
Pawlak%
}{%
Pawlak%
}{%
{\protect \APACyear {1991}}%
}]{%
zpb}
\APACinsertmetastar {%
zpb}%
\begin{APACrefauthors}%
Pawlak, Z.%
\end{APACrefauthors}%
\unskip\
\newblock
\APACrefYear{1991}.
\newblock
\APACrefbtitle {{Rough Sets: Theoretical Aspects of Reasoning About Data}}
  {{Rough Sets: Theoretical Aspects of Reasoning About Data}}.
\newblock
\APACaddressPublisher{Dodrecht}{Kluwer Academic Publishers}.
\PrintBackRefs{\CurrentBib}

\bibitem [\protect \citeauthoryear {%
Polkowski%
}{%
Polkowski%
}{%
{\protect \APACyear {2011}}%
}]{%
lp2011}
\APACinsertmetastar {%
lp2011}%
\begin{APACrefauthors}%
Polkowski, L.%
\end{APACrefauthors}%
\unskip\
\newblock
\APACrefYear{2011}.
\newblock
\APACrefbtitle {{Approximate Reasoning by Parts}} {{Approximate Reasoning by
  Parts}}.
\newblock
\APACaddressPublisher{}{Springer Verlag}.
\PrintBackRefs{\CurrentBib}

\bibitem [\protect \citeauthoryear {%
Polkowski%
\ \BBA {} Skowron%
}{%
Polkowski%
\ \BBA {} Skowron%
}{%
{\protect \APACyear {1996}}%
}]{%
ps3}
\APACinsertmetastar {%
ps3}%
\begin{APACrefauthors}%
Polkowski, L.%
\BCBT {}\ \BBA {} Skowron, A.%
\end{APACrefauthors}%
\unskip\
\newblock
\APACrefYearMonthDay{1996}{}{}.
\newblock
{\BBOQ}\APACrefatitle {{Rough Mereology: A New Paradigm for Approximate
  Reasoning}} {{Rough Mereology: A New Paradigm for Approximate
  Reasoning}}.{\BBCQ}
\newblock
\APACjournalVolNumPages{Internat. J. Appr. Reasoning}{15}{4}{333--365}.
\PrintBackRefs{\CurrentBib}

\bibitem [\protect \citeauthoryear {%
Priest%
}{%
Priest%
}{%
{\protect \APACyear {1984}}%
}]{%
gp5}
\APACinsertmetastar {%
gp5}%
\begin{APACrefauthors}%
Priest, G.%
\end{APACrefauthors}%
\unskip\
\newblock
\APACrefYearMonthDay{1984}{}{}.
\newblock
{\BBOQ}\APACrefatitle {{To Be and Not To Be: Dialectical Tense Logic}} {{To Be
  and Not To Be: Dialectical Tense Logic}}.{\BBCQ}
\newblock
\APACjournalVolNumPages{Studia Logica}{41}{2/3}{249--268}.
\PrintBackRefs{\CurrentBib}

\bibitem [\protect \citeauthoryear {%
Priest%
}{%
Priest%
}{%
{\protect \APACyear {1990}}%
}]{%
gp1990}
\APACinsertmetastar {%
gp1990}%
\begin{APACrefauthors}%
Priest, G.%
\end{APACrefauthors}%
\unskip\
\newblock
\APACrefYearMonthDay{1990}{}{}.
\newblock
{\BBOQ}\APACrefatitle {{Dialectic and Dialetheic}} {{Dialectic and
  Dialetheic}}.{\BBCQ}
\newblock
\APACjournalVolNumPages{Science and Society}{53}{4}{388--415}.
\PrintBackRefs{\CurrentBib}

\bibitem [\protect \citeauthoryear {%
Priest%
}{%
Priest%
}{%
{\protect \APACyear {1999}}%
}]{%
gp2}
\APACinsertmetastar {%
gp2}%
\begin{APACrefauthors}%
Priest, G.%
\end{APACrefauthors}%
\unskip\
\newblock
\APACrefYearMonthDay{1999}{}{}.
\newblock
{\BBOQ}\APACrefatitle {{What Not? A Defence of a Dialetheic Theory of
  Negation}} {{What Not? A Defence of a Dialetheic Theory of Negation}}.{\BBCQ}
\newblock
\BIn{} D.~Gabbay\ (\BED), \APACrefbtitle {{What is Negation ?}} {{What is
  Negation ?}}\ (\BPGS\ 101--120).
\newblock
\APACaddressPublisher{}{Kluwer}.
\PrintBackRefs{\CurrentBib}

\bibitem [\protect \citeauthoryear {%
Priest%
}{%
Priest%
}{%
{\protect \APACyear {2006}}%
}]{%
gp2006}
\APACinsertmetastar {%
gp2006}%
\begin{APACrefauthors}%
Priest, G.%
\end{APACrefauthors}%
\unskip\
\newblock
\APACrefYear{2006}.
\newblock
\APACaddressPublisher{}{Oxford University Press}.
\PrintBackRefs{\CurrentBib}

\bibitem [\protect \citeauthoryear {%
Priest%
}{%
Priest%
}{%
{\protect \APACyear {2007}}%
}]{%
gp1}
\APACinsertmetastar {%
gp1}%
\begin{APACrefauthors}%
Priest, G.%
\end{APACrefauthors}%
\unskip\
\newblock
\APACrefYearMonthDay{2007}{}{}.
\newblock
{\BBOQ}\APACrefatitle {{Logicians Setting Together Contradictories - A
  Perspective on Relevance, Paraconsistency and Dialetheism}} {{Logicians
  Setting Together Contradictories - A Perspective on Relevance,
  Paraconsistency and Dialetheism}}.{\BBCQ}
\newblock
\BIn{} D.~Jacquette\ (\BED), \APACrefbtitle {{Blackwell Handbook to
  Philosophical Logic}.} {{Blackwell Handbook to Philosophical Logic}.}
\newblock
\APACaddressPublisher{}{Blackwell}.
\PrintBackRefs{\CurrentBib}

\bibitem [\protect \citeauthoryear {%
Priest%
}{%
Priest%
}{%
{\protect \APACyear {2014}}%
}]{%
gp2014}
\APACinsertmetastar {%
gp2014}%
\begin{APACrefauthors}%
Priest, G.%
\end{APACrefauthors}%
\unskip\
\newblock
\APACrefYearMonthDay{2014}{}{}.
\newblock
{\BBOQ}\APACrefatitle {{Contradictory Concepts}} {{Contradictory
  Concepts}}.{\BBCQ}
\newblock
\BIn{} E.~Weber, D.~Wouters\BCBL {}\ \BBA {} J.~Meheus\ (\BEDS), \APACrefbtitle
  {{Logic, Reasoning and Rationality}} {{Logic, Reasoning and Rationality}}\
  (\BVOL~5, \BPGS\ 197--216).
\newblock
\APACaddressPublisher{}{Springer}.
\PrintBackRefs{\CurrentBib}

\bibitem [\protect \citeauthoryear {%
Rasiowa%
}{%
Rasiowa%
}{%
{\protect \APACyear {1974}}%
}]{%
rh}
\APACinsertmetastar {%
rh}%
\begin{APACrefauthors}%
Rasiowa, H.%
\end{APACrefauthors}%
\unskip\
\newblock
\APACrefYear{1974}.
\newblock
\APACrefbtitle {{An Algebraic Approach to Nonclassical Logics}} {{An Algebraic
  Approach to Nonclassical Logics}}\ (\BVOL~78).
\newblock
\APACaddressPublisher{Warsaw}{North Holland}.
\PrintBackRefs{\CurrentBib}

\bibitem [\protect \citeauthoryear {%
Saha%
, Sen%
\BCBL {}\ \BBA {} Chakraborty%
}{%
Saha%
\ \protect \BOthers {.}}{%
{\protect \APACyear {2015}}%
}]{%
ajm2015}
\APACinsertmetastar {%
ajm2015}%
\begin{APACrefauthors}%
Saha, A.%
, Sen, J.%
\BCBL {}\ \BBA {} Chakraborty, M\BPBI K.%
\end{APACrefauthors}%
\unskip\
\newblock
\APACrefYearMonthDay{2015}{}{}.
\newblock
{\BBOQ}\APACrefatitle {{Algebraic Structures in The Vicinity of Pre-Rough
  Algebra and Their Logics II}} {{Algebraic Structures in The Vicinity of
  Pre-Rough Algebra and Their Logics II}}.{\BBCQ}
\newblock
\APACjournalVolNumPages{Information Sciences}{333}{}{44--60}.
\PrintBackRefs{\CurrentBib}

\bibitem [\protect \citeauthoryear {%
Sambin%
}{%
Sambin%
}{%
{\protect \APACyear {1987}}%
}]{%
gs1987}
\APACinsertmetastar {%
gs1987}%
\begin{APACrefauthors}%
Sambin, G.%
\end{APACrefauthors}%
\unskip\
\newblock
\APACrefYearMonthDay{1987}{}{}.
\newblock
{\BBOQ}\APACrefatitle {{Intuitionistic Formal Spaces - A First Communication}}
  {{Intuitionistic Formal Spaces - A First Communication}}.{\BBCQ}
\newblock
\BIn{} D.~Skordev\ (\BED), \APACrefbtitle {{Mathematical Logic and Its
  Applications}} {{Mathematical Logic and Its Applications}}\ (\BPGS\
  187--204).
\newblock
\APACaddressPublisher{}{Plenum Press}.
\newblock
\begin{APACrefURL} \url{http://www.math.unipd.it/~sambin/txt/ifs87-97.pdf}
  \end{APACrefURL}
\PrintBackRefs{\CurrentBib}

\bibitem [\protect \citeauthoryear {%
Sambin%
\ \BBA {} Gebellato%
}{%
Sambin%
\ \BBA {} Gebellato%
}{%
{\protect \APACyear {1999}}%
}]{%
gs1999}
\APACinsertmetastar {%
gs1999}%
\begin{APACrefauthors}%
Sambin, G.%
\BCBT {}\ \BBA {} Gebellato, S.%
\end{APACrefauthors}%
\unskip\
\newblock
\APACrefYearMonthDay{1999}{}{}.
\newblock
{\BBOQ}\APACrefatitle {{A Preview of The Basic Picture: A New Perspective on
  Formal Topology}} {{A Preview of The Basic Picture: A New Perspective on
  Formal Topology}}.{\BBCQ}
\newblock
\BIn{} T.~Altenkirch, B.~Reus\BCBL {}\ \BBA {} W.~Naraschewski\ (\BEDS),
  \APACrefbtitle {{Types1998}} {{Types1998}}\ (\BPGS\ 194--208).
\newblock
\APACaddressPublisher{}{Springer}.
\newblock
\begin{APACrefDOI} \doi{10.1007/3-540-48167-2} \end{APACrefDOI}
\PrintBackRefs{\CurrentBib}

\bibitem [\protect \citeauthoryear {%
Schang%
}{%
Schang%
}{%
{\protect \APACyear {2012}}%
}]{%
fs2012}
\APACinsertmetastar {%
fs2012}%
\begin{APACrefauthors}%
Schang, F.%
\end{APACrefauthors}%
\unskip\
\newblock
\APACrefYearMonthDay{2012}{}{}.
\newblock
{\BBOQ}\APACrefatitle {{Opposites and oppositions Around and Beyond The Square
  of Opposition}} {{Opposites and oppositions Around and Beyond The Square of
  Opposition}}.{\BBCQ}
\newblock
\BIn{} J\BPBI Y.~Beziau, D.~Jacquette\BCBL {}\ \BOthers {.}\ (\BEDS),
  \APACrefbtitle {{Around and Beyond The Square of Opposition }} {{Around and
  Beyond The Square of Opposition }}\ (\BVOL~I, \BPGS\ 147--174).
\newblock
\APACaddressPublisher{}{Birkhauser}.
\PrintBackRefs{\CurrentBib}

\bibitem [\protect \citeauthoryear {%
Shannon%
}{%
Shannon%
}{%
{\protect \APACyear {1948}}%
}]{%
sha48}
\APACinsertmetastar {%
sha48}%
\begin{APACrefauthors}%
Shannon, C\BPBI E.%
\end{APACrefauthors}%
\unskip\
\newblock
\APACrefYearMonthDay{1948}{}{}.
\newblock
{\BBOQ}\APACrefatitle {{A Mathematical Theory of Communication}} {{A
  Mathematical Theory of Communication}}.{\BBCQ}
\newblock
\APACjournalVolNumPages{Bell Systems Technical Journal}{27}{}{379--423,
  623--656}.
\PrintBackRefs{\CurrentBib}

\bibitem [\protect \citeauthoryear {%
{\'S}l{\c e}zak%
\ \BBA {} Wasilewski%
}{%
{\'S}l{\c e}zak%
\ \BBA {} Wasilewski%
}{%
{\protect \APACyear {2007}}%
}]{%
sw}
\APACinsertmetastar {%
sw}%
\begin{APACrefauthors}%
{\'S}l{\c e}zak, D.%
\BCBT {}\ \BBA {} Wasilewski, P.%
\end{APACrefauthors}%
\unskip\
\newblock
\APACrefYearMonthDay{2007}{}{}.
\newblock
{\BBOQ}\APACrefatitle {{Granular Sets - Foundations and Case Study of Tolerance
  Spaces}} {{Granular Sets - Foundations and Case Study of Tolerance
  Spaces}}.{\BBCQ}
\newblock
\BIn{} A.~An, J.~Stefanowski, S.~Ramanna, C\BPBI J.~Butz, W.~Pedrycz\BCBL {}\
  \BBA {} G.~Wang\ (\BEDS), \APACrefbtitle {{RSFDGrC 2007, LNCS}} {{RSFDGrC
  2007, LNCS}}\ (\BVOL\ 4482, \BPGS\ 435--442).
\newblock
\APACaddressPublisher{}{Springer}.
\PrintBackRefs{\CurrentBib}

\bibitem [\protect \citeauthoryear {%
Swaminathan%
\ \BBA {} Rawal%
}{%
Swaminathan%
\ \BBA {} Rawal%
}{%
{\protect \APACyear {2015}}%
}]{%
ms2015}
\APACinsertmetastar {%
ms2015}%
\begin{APACrefauthors}%
Swaminathan, M.%
\BCBT {}\ \BBA {} Rawal, V.%
\end{APACrefauthors}%
\unskip\
\newblock
\APACrefYear{2015}.
\newblock
\APACaddressPublisher{}{Tulika Books}.
\PrintBackRefs{\CurrentBib}

\bibitem [\protect \citeauthoryear {%
Tzouvaras%
}{%
Tzouvaras%
}{%
{\protect \APACyear {2001}}%
}]{%
at}
\APACinsertmetastar {%
at}%
\begin{APACrefauthors}%
Tzouvaras, A.%
\end{APACrefauthors}%
\unskip\
\newblock
\APACrefYearMonthDay{2001}{}{}.
\newblock
{\BBOQ}\APACrefatitle {{Periodicity of Negation}} {{Periodicity of
  Negation}}.{\BBCQ}
\newblock
\APACjournalVolNumPages{Notre Dame J. Formal Logic}{42}{2}{88--99}.
\PrintBackRefs{\CurrentBib}

\bibitem [\protect \citeauthoryear {%
Woods%
}{%
Woods%
}{%
{\protect \APACyear {2004}}%
}]{%
wj}
\APACinsertmetastar {%
wj}%
\begin{APACrefauthors}%
Woods, J.%
\end{APACrefauthors}%
\unskip\
\newblock
\APACrefYearMonthDay{2004}{Preprint}{}.
\newblock
\APACrefbtitle {{Dialectical Considerations on The Logic of Contradiction: Part
  I, II}.} {{Dialectical Considerations on The Logic of Contradiction: Part I,
  II}.}
\PrintBackRefs{\CurrentBib}

\bibitem [\protect \citeauthoryear {%
Yao%
}{%
Yao%
}{%
{\protect \APACyear {1998}}%
}]{%
yy9}
\APACinsertmetastar {%
yy9}%
\begin{APACrefauthors}%
Yao, Y\BPBI Y.%
\end{APACrefauthors}%
\unskip\
\newblock
\APACrefYearMonthDay{1998}{}{}.
\newblock
{\BBOQ}\APACrefatitle {{Relational Interpretation of Neighbourhood Operators
  and Rough Set Approximation Operators}} {{Relational Interpretation of
  Neighbourhood Operators and Rough Set Approximation Operators}}.{\BBCQ}
\newblock
\APACjournalVolNumPages{Information Sciences}{}{}{239--259}.
\PrintBackRefs{\CurrentBib}

\bibitem [\protect \citeauthoryear {%
Yao%
\ \BBA {} Yao%
}{%
Yao%
\ \BBA {} Yao%
}{%
{\protect \APACyear {2012}}%
}]{%
yy2012c}
\APACinsertmetastar {%
yy2012c}%
\begin{APACrefauthors}%
Yao, Y\BPBI Y.%
\BCBT {}\ \BBA {} Yao, B.%
\end{APACrefauthors}%
\unskip\
\newblock
\APACrefYearMonthDay{2012}{}{}.
\newblock
{\BBOQ}\APACrefatitle {{Covering Based Rough Set Approximations}} {{Covering
  Based Rough Set Approximations}}.{\BBCQ}
\newblock
\APACjournalVolNumPages{Information Sciences}{200}{}{91--107}.
\PrintBackRefs{\CurrentBib}

\bibitem [\protect \citeauthoryear {%
Zeleny%
}{%
Zeleny%
}{%
{\protect \APACyear {1994}}%
}]{%
zj1}
\APACinsertmetastar {%
zj1}%
\begin{APACrefauthors}%
Zeleny, J.%
\end{APACrefauthors}%
\unskip\
\newblock
\APACrefYearMonthDay{1994}{}{}.
\newblock
{\BBOQ}\APACrefatitle {{Paraconsistency and Dialectical Consistency}}
  {{Paraconsistency and Dialectical Consistency}}.{\BBCQ}
\newblock
\APACjournalVolNumPages{From The Logical Point of View}{1}{}{35--51}.
\PrintBackRefs{\CurrentBib}

\bibitem [\protect \citeauthoryear {%
Zimmerman%
}{%
Zimmerman%
}{%
{\protect \APACyear {2000}}%
}]{%
bz2000}
\APACinsertmetastar {%
bz2000}%
\begin{APACrefauthors}%
Zimmerman, B.%
\end{APACrefauthors}%
\ (\BED).
\unskip\
\newblock
\APACrefYear{2000}.
\newblock
\APACrefbtitle {{Lesbian Histories and Cultures: An Encyclopedia }} {{Lesbian
  Histories and Cultures: An Encyclopedia }}.
\newblock
\APACaddressPublisher{}{Garland Publishers}.
\PrintBackRefs{\CurrentBib}

\end{thebibliography}

\end{document}